\documentclass[11pt]{amsart}
\usepackage{amsfonts, amsmath, amssymb, amscd, amsthm, graphicx,enumerate}
\usepackage{epsfig, color}
\usepackage{hyperref}

\hoffset -1.8cm \voffset -1.1cm \textwidth=6.5in \textheight=8.5in
\tolerance=9000 \emergencystretch=5pt \vfuzz=2pt
\parskip=1.5mm

\makeatletter
\def\blfootnote{\xdef\@thefnmark{}\@footnotetext}
\makeatother


\newtheorem{thm}{Theorem}[section]
\newtheorem{cor}[thm]{Corollary}
\newtheorem{lem}[thm]{Lemma}

\theoremstyle{definition}
\newtheorem{defn}[thm]{Definition}
\theoremstyle{remark}
\newtheorem{rem}[thm]{Remark}
\newtheorem{ex}[thm]{Example}

\newfont{\eufm}{eufm10}


\newcommand{\Hl}{\{ H_\lambda \} _{\lambda \in \Lambda } }
\newcommand{\e}{\varepsilon }
\renewcommand{\phi}{\varphi}

\newcommand{\lab}{{\bf Lab}}
\newcommand{\Ker}{{\rm Ker\,}}
\newcommand{\Gs}{\Gamma (G, S)}
\renewcommand{\l}{{\ell}}
\newcommand{\Ga }{\Gamma (G, \mathcal A)}
\newcommand{\N}{\mathbb N}
\newcommand{\Z}{\mathbb Z}

\renewcommand{\ll }{\langle\hspace{-.7mm}\langle }
\newcommand{\rr }{\rangle\hspace{-.7mm}\rangle }

\newcommand{\h}{\hookrightarrow _h }
\renewcommand{\d }{{\rm dist}}
\begin{document}

\title{Conjugacy growth of finitely generated groups}
\author{Michael Hull}
\address[Michael Hull]{Department of Mathematics, Vanderbilt University, Nashville TN 37240, USA.}
\email{michael.b.hull@vanderbilt.edu}
\author{Denis Osin}
\address[Denis Osin]{Department of Mathematics, Vanderbilt University, Nashville TN 37240, USA.}
\email{denis.osin@gmail.com}

\thanks{The research was supported by the NSF grant DMS-1006345. The second author was also supported by the RFBR grant 11-01-00945}
\date{}

\begin{abstract}
The conjugacy growth function of a finitely generated group measures the number of conjugacy classes in balls with respect to a word metric. We study the following natural question: Which functions can occur as conjugacy growth function of finitely generated groups? Our main result answers the question completely. Namely we prove that a function $f\colon \mathbb N\to \mathbb N$ can be realized (up to a natural equivalence) as the conjugacy growth function of a finitely generated group if and only if $f$ is non-decreasing and bounded from above by $a^n$ for some $a\ge 1$. We also construct a finitely generated group $G$ and a subgroup $H\le G$ of index $2$ such that $H$ has only $2$ conjugacy classes while the conjugacy growth of $G$ is exponential. In particular, conjugacy growth is not a quasi-isometry invariant.
\end{abstract}

\keywords{Conjugacy growth function, small cancellation theory, relatively hyperbolic group, quasi-isometry}

\subjclass[2000]{20F69, 20E45, 20F65, 20F67}
\maketitle


\section{Introduction}
Let $G$ be a group generated by a set $X$. Recall that the {\it word length} of an element $g\in G$ with respect to the generating set $X$, denoted by $|g|_X$, is the length of a shortest word in $X\cup X^{-1}$ representing $g$ in the group $G$. If $X$ is finite one can consider the {\it growth function} of $G$, $\gamma _G\colon\N\to \N $, defined by $$\gamma _G(n)=|B_{G,X}(n)|,$$ where $$B_{G,X}(n)=\{ g\in G\mid |g|_X\le n\}. $$ It was first introduced by Efremovic \cite{E} and Svar\v c \cite{S} in the 50's, rediscovered by Milnor \cite{Mil1} in the 60's, and served as the starting point and a source of motivating examples for contemporary geometric group theory. In this paper we focus on a similar function $\xi _{G,X}\colon\N\to \N $ called the {\it conjugacy growth function} of $G$ with respect to $X$. By definition $\xi _{G,X}(n)$ is the number of conjugacy classes in the ball $B_{G,X}(n)$.

It is straightforward to verify that $\gamma_{G,X}$ and $\xi _{G,X}$ are independent of the choice of a particular finite generating set $X$ of $G$ up to the following equivalence relation. Given $f,g\colon\N \to \N $, we write $f\preceq g$ if there exists $C\in \mathbb N$ such that  $f(n)\le g(Cn)$ for all $n\in \N$. Further $f$ and $g$ are {\it equivalent} (we write $f\sim g$) if  $ f\preceq g$ and $g\preceq f$. In what follows we always consider growth functions up to this equivalence relation and omit $X$ from the notation.

The conjugacy growth function was introduced by Babenko \cite{IB} in order to study geodesic growth of Riemannian manifolds. Obviously free homotopy classes of loops in a manifold $M$  are in $1$-to-$1$ correspondence with conjugacy classes of $\pi _1(M)$. If $M$ is a closed Riemannian manifold, the proposition known as the Svar\v c--Milnor Lemma (and first proved by Efremovic in \cite{E}) then implies that $\xi _{\pi _1(M)}$ is equivalent to the function counting free homotopy classes of loops of given length in $M$. The later function serves as a lower bound for the geodesic growth function of $M$, which counts the number of geometrically distinct closed geodesics of given length on $M$. Moreover if $M$ has negative sectional curvature, then all these functions are equivalent.

Geodesic growth of compact Riemannian manifolds has been studied extensively since late 60's (see, e.g., \cite{BTZ,BaHi,Kat,M}). The most successful results were obtained in the case of negatively curved manifolds by Margulis \cite{M,MS}. He proved that the number of primitive closed geodesics of length at most $n$ on a closed manifold of negative sectional curvature is approximately equal to ${e^{hn}}/(hn)$, where $h$ is the topological entropy of the geodesic flow on the unit tangent bundle of the manifold. Coornaert and Knieper \cite{CK1,CK2} proved a group theoretic analogue of this result and found an asymptotic estimate for the number of primitive conjugacy classes in a hyperbolic group similar to that from Margulis' papers.

Recall that a conjugacy class of a group $G$ is called {\it primitive} if some (or, equivalently, any) element $g$ from the class is not a proper power, i.e., $h^n=g$ implies $n=\pm 1$. For a group $G$ generated by a finite set $X$, let $\pi _G(n)$ denote the function counting primitive conjugacy classes in $B_{G,X}(n)$. It is not hard to show that  $\pi _G$ and $\xi _G$ are equivalent and grow exponentially for many `hyperbolic-like' groups. Indeed we prove the following.

\begin{thm}[Theorem \ref{main0}]\label{intromain0}
Let $G$ be a finitely generated group with a non-degenerate hyperbolically embedded subgroup. Then $\xi _G\sim \pi _G\sim 2^n$.
\end{thm}

The notion of a hyperbolically embedded subgroup was introduced in \cite{DGO}. The condition that the subgroup is non-degenerate simply means that it is proper and infinite. Groups containing non-degenerate hyperbolically embedded subgroups include non-elementary hyperbolic and, more generally, relatively hyperbolic groups with proper parabolic subgroups, all but finitely many mapping class groups of closed orientable surfaces (possibly with punctures), $Out(F_n)$ for $n\ge 2$, the Cremona group ${\rm Bir} ({\mathbb P^2_{\mathbb C}})$ (i.e., the group of birational automorphism of the complex projective plane), and many other examples \cite{DGO}. Theorem \ref{intromain0} can be used to completely classify conjugacy growth functions of subgroups in mapping class groups (see Section 3).

On the other hand, counting primitive conjugacy classes does not make much sense for general groups. For instance if the group $G$ is torsion without involutions, then there are no primitive conjugacy classes in $G$ at all.  Moreover this can happen even for torsion free groups. The simplest (but not finitely generated) example is the group $\mathbb Q$. Finitely generated examples of torsion free groups $G$ where every element $g\in G$ is a proper power (and, moreover, for every $n\in \mathbb Z\setminus \{ 0\} $ the equation $x^n=g$ has a solution in $G$) were first constructed by Guba in  \cite{Gub}. Thus in the context of abstract group theory it seems more natural to consider the conjugacy growth function $\xi _G$.

The algebraic study of the conjugacy growth function is strongly motivated by its similarity to the ordinary growth function. Recall that a function $f$ is {\it exponential} if $f\sim 2^n$, {\it polynomial} if $f\sim n^d$ for some $d\in \mathbb N$, and {\it polynomially bounded} if $f\preceq n^d$ for some $d\in \mathbb N$. In \cite{Mil1}, Milnor conjectured that $\gamma_G$ is always either exponential or polynomial. Counterexamples to this conjecture were constructed by Grigorchuk in \cite{Gri}. It turns out, however, that Milnor's dichotomy does hold for some important classes of groups including solvable and linear ones \cite{Mil2,T,W}. Gromov \cite{Gr} proved that any group with polynomially bounded growth function contains a nilpotent subgroup of finite index. Combining this with a result of  Bass \cite{B} saying that every nilpotent group has a polynomial growth function, one can easily derive that if the growth function of a group is polynomially bounded, then it is in fact polynomial. Despite these advances, it is still far from being clear which functions can occur as growth functions of finitely generated groups. For instance it is unknown whether there exists a group $G$ with non-polynomial growth function satisfying $\gamma _G \preceq 2^{\sqrt{n}}$.  For a comprehensive survey we refer the interested reader to \cite{Gri91}.

Recently some similar results were proved for the conjugacy growth function. Breuillard and Cornulier \cite{BC} showed that for a finitely generated solvable group $G$, the conjugacy growth function is either polynomially bounded or exponential and, furthermore, $\xi _G$ is polynomially bounded if and only if $G$ is virtually nilpotent. (For polycyclic groups this result was proved independently and simultaneously by the first author in \cite{H}.) This dichotomy was also proved for finitely generated linear groups by Breuillard, Cornulier, Lubotzky, and Meiri  \cite{BCLM}. Motivated by the Milnor conjecture, Guba and Sapir \cite{GubS} suggested that `natural' groups have either polynomially bounded or exponential conjugacy growth. They proved that many $HNN$-extensions and diagram groups, including the R. Thompson group $F$, have exponential conjugacy growth.

Note however that the ordinary and conjugacy growth functions can behave differently. For instance, the conjugacy growth of a nilpotent group is not necessary polynomial. Indeed let $H$ be the Heisenberg group
\[
H={\rm UT}_3(\mathbb Z)\cong \langle a,b,c \; | \; [a,b]=c, [a,c]=[b,c]=1 \rangle .
\]
Then it is fairly easy to compute that $\xi_H(n)\sim n^2\log(n)$ (this example can be found in \cite{IB} and \cite{GubS}). Note also that there exist finitely generated groups of exponential growth with finitely many conjugacy classes \cite[Theorem 41.2]{Ols-book} and even with $2$ conjugacy classes \cite{Osi10}. Thus $\gamma _G$ and $\xi_G$ can be very far apart, actually on the opposite sides of the spectrum.

The main goal of this paper is to address the following realization problem: \textit{Which functions can be realized (up to equivalence) as conjugacy growth functions of finitely generated groups}? Unlike in the case of ordinary growth, the realization problem for conjugacy growth admits a complete solution.

\begin{thm}[Theorem \ref{main}]\label{intromain}
Let $G$ be a group generated by a finite set $X$, $f$ the conjugacy growth function of $G$ with respect to $X$. Then the following conditions hold.
\begin{enumerate}
\item[(a)] $f$ is non-decreasing.
\item[(b)] There exists $a\ge 1$ such that $f(n) \le  a^n$ for every $n\in\N$.
\end{enumerate}
Conversely, suppose that a function $f\colon\N\to \N$ satisfies the above conditions (a) and (b). Then there exists an infinite finitely generated group $G$ such that $\xi_G\sim f$.
\end{thm}

The first claim of the theorem is essentially trivial. Note, however,  that even realizing simplest growth functions, e.g., $f(n)=\log n$, is nontrivial; moreover, we are not aware of any groups other than the ones constructed in this paper that have unbounded conjugacy growth functions satisfying $f(n)=o(n)$.

When speaking about asymptotic invariants of groups it is customary to ask whether these invariants are {\it geometric}, i.e. invariant under quasi-isometry. (Recall that quasi-isometry is a coarse analogue of the notion of isometry between metric spaces; for details and motivation we refer to \cite{Gr3}). Many asymptotic invariants of groups are invariant under quasi-isometry up to suitable equivalence relations, e.g., the ordinary growth function, the Dehn function, and the asymptotic dimension growth function, just to name a few. However it turns out that the conjugacy growth function is not a geometric invariant in the strongest possible sense. More precisely, we construct the following example.

\begin{thm}[Theorem\ref{main2}]\label{intromain2}
There exists a finitely generated group $G$ and a finite index subgroup $H\le G$ such that $H$ has $2$ conjugacy classes while $G$ is of exponential conjugacy growth.
\end{thm}

Since every (non-trivial) group has at least two conjugacy classes and at most exponential conjugacy growth, this theorem shows that the conjugacy growth of two quasi-isometric groups can be as far apart as possible. Note also that it is fairly easy to prove that for every finitely generated group $G$ and a finite index subgroup $H\le G$, one has $\xi _H\preceq \xi_G$.

The proofs of Theorems \ref{intromain} and \ref{intromain2} will be accomplished by constructing groups which are direct limits of relatively hyperbolic groups. The main tool in this procedure will be the theory of small cancellation over relatively hyperbolic groups. The idea of generalizing small cancellation theory to groups acting on hyperbolic spaces goes back to Gromov's paper \cite{Gr2}. In the case of hyperbolic groups, it was formalized by Champetier \cite{Ch}, Delzant \cite{Del}, Olshanskii \cite{Ols}, and others. Olshanskii's approach was generalized to relatively hyperbolic groups by the second author in \cite{Osi10} and this generalization will be used in our paper. Some difficulties occurring  in the proof of Theorem \ref{intromain} are discussed in Section 6.  To overcome these difficulties, we obtain new results about conjugate elements and elementary subgroups in small cancellation quotients of relatively hyperbolic groups, which can be useful elsewhere.

The paper is organized as follows. In Section 2 we collect necessary definitions and facts about relatively hyperbolic groups. Section 3 contains the proof of Theorem \ref{intromain0} and some related results. Sections 4 and 5 focus on small cancellation quotients of relatively hyperbolic groups and provide us with necessary tools for the rest of the paper. Sections 6 and 7 contain the proofs of Theorem \ref{intromain} and Theorem \ref{intromain2}, respectively.

\section{Preliminaries}


\paragraph{\bf Notation} We begin by standardizing the notation that will be used for the remainder of the paper. Given a group $G$ generated by a subset $S\subseteq G$, we denote by $\Gs $ the Cayley graph of $G$ with respect to $S$. That is, $\Gs$ is the graph with vertex set $G$ and an edge labeled by $s$ between each pair of vertices of the form $(g, gs)$, where $s\in S$. We will assume all generating sets are symmetric, that is $S=S\cup S^{-1}$. If $p$ is a (combinatorial) path in $\Gs$, $\lab (p)$ denotes its label, $\l(p)$ denotes its length, $p_-$ and $p_+$ denote its starting and ending vertex.

For a word $W$ in an alphabet $S$, $\|W\|$ denotes its length. For two words $U$ and $V$ we write $U \equiv V$ to denote the letter-by-letter equality between them. Clearly there is a one to one correspondence between words $W$ and paths $p$ such that $p_-=1$ and $\lab(p)\equiv W$.

The normal closure of a subset $K\subseteq G$ in a group $G$ (i.e., the minimal normal subgroup of $G$ containing $K$) is denoted by $\ll K\rr^G$, or simply by $\ll K\rr$ if omitting $G$ does not lead to a confusion.  For  group elements $g$ and $t$, $g^t$ denotes $t^{-1}gt$.  We write $g\sim h$ if $g$ is conjugate to $h$, that is  there exists $t\in G$ such that $g^t=h$.

\paragraph{\bf Van Kampen Diagrams.}
Recall that a {\it van Kampen diagram} $\Delta $ over a presentation
\begin{equation}
G=\langle \mathcal A\; | \; \mathcal O\rangle \label{ZP}
\end{equation}
is a finite, oriented, connected, simply--connected, planar 2--complex endowed with a labeling
function $\lab\colon E(\Delta )\to \mathcal A$, where $E(\Delta ) $
denotes the set of oriented edges of $\Delta $, such that $\lab
(e^{-1})\equiv (\lab (e))^{-1}$. Labels and lengths of paths are defined as in the case of Cayley graphs. Given a cell $\Pi $ of $\Delta $, we denote by $\partial \Pi$ the boundary of $\Pi $; similarly, $\partial \Delta $ denotes the boundary of $\Delta $. The labels of $\partial \Pi $ and $\partial \Delta $ are defined up to a cyclic permutation. An additional requirement is that for any cell $\Pi $ of $\Delta $, the boundary label $\lab (\partial
\Pi)$ is equal to a cyclic permutation of a word $P^{\pm 1}$, where $P\in \mathcal O$. The van Kampen Lemma states that a word $W$ over the alphabet
$\mathcal A$ represents the identity in the group given by
(\ref{ZP}) if and only if there exists a
diagram $\Delta $ over (\ref{ZP}) such that $\lab(\partial \Delta
)\equiv W$ \cite[Ch. 5, Theorem 1.1]{LS}.

\begin{rem}\label{cm}
For every van Kampen diagram $\Delta $ over (\ref{ZP}) and any
fixed vertex $o$ of $\Delta $, there is a (unique) combinatorial
map $\gamma \colon Sk^{(1)} (\Delta )\to \Ga $ (where $Sk^{(1)} (\Delta )$ denotes the 1-skeleton of $\Delta$)  that preserves labels and orientation of edges and maps $o$ to the vertex $1$ of
$\Ga $.
\end{rem}
\paragraph{\bf Relatively hyperbolic groups.} The notion of a relatively hyperbolic group was originally suggested by Gromov in \cite{Gr2}. In \cite{Bo} this idea was elaborated on by Bowditch, who suggested a definition in terms of the dynamics of properly discontinuous
isometric group actions on hyperbolic spaces. Alternatively, another definition was suggested by Farb in \cite{F} who looked at the hyperbolicity of a certain graph associated to a group and a collection of subgroups, called the {\it coset graph}. These definitions are not equivalent, but Farb also considered another property called {\it bounded coset penetration}, or BCP. It turns out that being relatively hyperbolic in the sense of Bowditch is equivalent to being relatively hyperbolic in the sense of Farb and satisfying BCP (see, for example \cite{Bo}). In \cite{Osi06a}, the second author gave an isoperimetric characterization of relative hyperbolicity which generalizes these definitions to the case of non-finitely generated groups. We will present the definition found there and refer the reader to \cite{Hru,Osi06a} for more details.

\begin{defn}\label{relpresdef}Let $G$ be a group, $\Hl $ a collection of
subgroups of $G$, $X$ a subset of $G$. We say that $X$ is a {\it
relative generating set of $G$ with respect to $\Hl $} if $G$ is
generated by $X$ together with the union of all $H_\lambda $. In this situation the group
$G$ can be regarded as a quotient group of the free product
\begin{equation}
F=\left( \ast _{\lambda\in \Lambda } H_\lambda  \right) \ast F(X),
\label{F}
\end{equation}
where $F(X)$ is the free group with the basis $X$. Let $N$ denote
the kernel of the natural homomorphism $F\to G$. If $N$ is the
normal closure of a subset $\mathcal Q\subseteq N$ in the group
$F$, we say that $G$ has {\it relative presentation}
\begin{equation}\label{G}
\langle X,\; H_\lambda, \lambda\in \Lambda \; |\; \mathcal Q
\rangle .
\end{equation}
If $|X|<\infty $ and $|\mathcal Q|<\infty $, the
relative presentation (\ref{G}) is said to be {\it finite} and the
group $G$ is said to be {\it finitely presented relative to the
collection of subgroups $\Hl $.}
\end{defn}

Set
\begin{equation}\label{H}
\mathcal H=\bigsqcup\limits_{\lambda\in \Lambda} (H_\lambda
\setminus \{ 1\} ) .
\end{equation}
Given a word $W$ in the alphabet $X\cup \mathcal H$ such that $W$
represents $1$ in $G$, there exists an expression
\begin{equation}
W=_F\prod\limits_{i=1}^k f_i^{-1}Q_i^{\pm 1}f_i \label{prod}
\end{equation}
with the equality in the group $F$, where $Q_i\in \mathcal Q$ and
$f_i\in F $ for $i=1, \ldots , k$. The smallest possible number
$k$ in a representation of the form (\ref{prod}) is called the
{\it relative area} of $W$ and is denoted by $Area^{rel}(W)$.

\begin{defn}
A group $G$ is {\it hyperbolic relative to a collection of
subgroups} $\Hl $ if $G$ is finitely presented relative to $\Hl $
and there is a constant $L>0$ such that for any word $W$ in $X\cup
\mathcal H$ representing the identity in $G$, we have $Area^{rel}
(W)\le L\| W\| $.
\end{defn}

If $G$ is hyperbolic relative to $\Hl$, then the subgroups  in $ \Hl$ are called {\it parabolic subgroups}. Observe also that the relative area of a word $W$ representing $1$
in $G$ can be defined geometrically via van Kampen diagrams. Let
$G$ be a group given by the relative presentation (\ref{G}) with
respect to a collection of subgroups $\Hl $.  We denote by
$\mathcal S$ the set of all words in the alphabet $\mathcal H$
representing the identity in the group $F$ defined by (\ref{F}).
Then $G$ has the ordinary (non--relative) presentation
\begin{equation}\label{Gfull}
G=\langle X\cup\mathcal H\; |\;\mathcal S\cup \mathcal Q \rangle .
\end{equation}
A cell in van Kampen diagram $\Delta $ over (\ref{Gfull}) is
called a {\it $\mathcal Q$--cell} if its boundary is labeled by a
word from $\mathcal Q$. We denote by $N_\mathcal Q(\Delta )$ the
number of $\mathcal Q$--cells of $\Delta $. Obviously given a word
$W$ in $X\cup\mathcal H$ that represents $1$ in $G$, we have
$$
Area^{rel}(W)=\min\limits_{\lab (\partial \Delta ) \equiv W} \{
N_\mathcal Q (\Delta )\} ,
$$
where the minimum is taken over all van Kampen diagrams with
boundary label $W$. Thus, a group $G$ is hyperbolic relative to $\Hl$ if it is finitely presented with respect to $\Hl$ and all van Kampen diagrams over (\ref{Gfull}) satisfy a linear relative isoperimetric inequality.

In particular, a group $G$ is an ordinary (Gromov) {\it hyperbolic group} if $G$ is hyperbolic relative to the trivial subgroup. Another example (which is important for our purposes) is that any free product of groups is hyperbolic relative to the factors, since in this case $X=\mathcal Q=\emptyset$. More examples of relatively hyperbolic groups can be found in \cite{Osi07}.

The following useful result can be easily derived from the definition.

\begin{lem}[\cite{Osi06a}, Theorem 1.4]\label{malnorm}
Let $G$ be a group hyperbolic relative to a collection of subgroups $\Hl $. Then for every $\lambda \in \Lambda $ and $g\in G\setminus H_\lambda $, we have $|H_\lambda \cap H_\lambda ^g|<\infty $. Also, if $\mu\neq\lambda$, then $|H_\mu \cap H_\lambda ^g|<\infty$ for all $g\in G$.  In particular, if $G$ is torsion free, then every parabolic subgroup is malnormal, and any two elements in distinct parabolic subgroups are non-conjugate.
\end{lem}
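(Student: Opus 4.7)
The plan is to argue by contradiction using the linear relative isoperimetric inequality. Suppose $h_1, h_2, \ldots$ are infinitely many distinct elements of $H_\lambda \cap H_\lambda^g$ for some fixed $g \in G \setminus H_\lambda$; the $\mu \neq \lambda$ case is a variant of the same argument that I will address at the end. Fix a shortest word $U$ in the alphabet $X \cup \mathcal H$ representing $g$, and for each $n$ put $h_n' = g^{-1} h_n g \in H_\lambda$. Then
\[
W_n \equiv h_n \cdot U \cdot (h_n')^{-1} \cdot U^{-1}
\]
is a word over $X \cup \mathcal H$ representing $1$ in $G$, of length $\|W_n\| \le 2\|U\| + 2$ independent of $n$. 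By the definition of relative hyperbolicity, each $W_n$ is the boundary label of a van Kampen diagram $\Delta_n$ over the presentation \eqref{Gfull} with $N_{\mathcal Q}(\Delta_n) \le L(2\|U\|+2) =: C$, a constant depending only on $g$. Without loss of generality I take each $\Delta_n$ to be minimal with respect to $N_{\mathcal Q}$.

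Next I would analyze the $H_\lambda$-components of $\partial \Delta_n$, i.e.\ the maximal subpaths of $\partial \Delta_n$ labelled by words in $H_\lambda \setminus \{1\}$. The edges labelled by $h_n$ and $(h_n')^{-1}$ are each single such components, while the two copies of $U$ contribute boundedly many further components from various $H_\mu$. Call two $H_\lambda$-components \emph{connected} in $\Delta_n$ if they can be joined by a path inside $\Delta_n$ whose edges are all labelled by letters of $H_\lambda \setminus \{1\}$; under the canonical map of Remark~\ref{cm} this equates to their initial vertices lying in a single left coset of $H_\lambda$ in $G$. Since $N_{\mathcal Q}(\Delta_n) \le C$, only finitely many connectivity patterns among components are possible, so by pigeonhole I may restrict to an infinite subfamily of indices $n$ on which this pattern is constant.

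Either the $h_n$- and $(h_n')^{-1}$-components are connected in $\Delta_n$ for all such $n$, or for none. In the connected case, the connecting path reads off a word in $H_\lambda \setminus \{1\}$ of length at most $C$; extracting the appropriate sub-diagram rewrites $g$ modulo $H_\lambda$ as one of boundedly many elements, so $g \in H_\lambda \cdot F$ for a fixed finite set $F$, and a further pigeonhole forces $g \in H_\lambda$, contradicting the hypothesis. In the disconnected case, one cuts $\Delta_n$ along minimal $H_\lambda$-connecting paths joining all other pairs of components; the sub-diagram bordering the $h_n$-edge then has combinatorial type drawn from a finite list, confining the possible values of $h_n$ to a finite set and contradicting distinctness of the sequence. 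The case $\mu \neq \lambda$ is handled similarly: no $\mathcal Q$-cell has a boundary word mixing letters from $\mathcal H_\mu$ and $\mathcal H_\lambda$, so the distinguished components automatically lie in disjoint connectivity classes and the ``disconnected'' argument applies directly.

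The principal technical obstacle is making these pigeonhole steps genuinely rigorous: one must establish that a minimal van Kampen diagram over \eqref{Gfull} of $\mathcal Q$-area at most $C$ carries only boundedly many combinatorially distinct skeleta of $H_\lambda$-components together with their connecting paths. This is essentially the content of Farb's bounded coset penetration property, and its derivation from the isoperimetric definition requires eliminating cancelling pairs of adjacent $\mathcal Q$-cells and performing a careful case-analysis of how $\mathcal H_\lambda$-subdiagrams can glue around each component, cf.\ the auxiliary lemmas developed in \cite{Osi06a}.
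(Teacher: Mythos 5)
The paper cites this lemma from \cite{Osi06a} and does not reprove it, so there is no internal proof to compare against; I will simply assess your attempt. Your overall strategy -- encode the conjugacy $h_n g = g h_n'$ into a boundary word $W_n$ of length bounded independently of $n$, invoke the linear relative isoperimetric inequality to bound the $\mathcal Q$-area, and analyze $H_\lambda$-components -- is the right framework and mirrors the argument in \cite{Osi06a}. However, the pivotal ``disconnected case'' step has a genuine gap.

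You conclude that because the sub-diagram bordering the $h_n$-edge ``has combinatorial type drawn from a finite list,'' $h_n$ is confined to a finite set. This does not follow: the presentation \eqref{Gfull} is over the infinite alphabet $X\cup\mathcal H$, and both the $\mathcal S$-cells and the $\mathcal H$-edges of a diagram may carry arbitrary labels from the (typically infinite) groups $H_\mu$. A bigon made of two $H_\lambda$-edges is a single combinatorial type, yet it admits infinitely many labelings. Bounding the $\mathcal Q$-area and fixing the combinatorial skeleton of the $H_\lambda$-components therefore does not bound the group element $h_n$. The missing ingredient is a \emph{metric} bound: when the $h_n$-component is isolated (not connected to any $H_\lambda$-component of $U$, $U^{-1}$, or the $(h_n')^{-1}$-edge), one needs the lemma from \cite{Osi06a} -- essentially the bounded coset penetration property, derived there from the relative isoperimetric inequality -- that bounds the distance from $1$ to $h_n$ in the locally finite metric $\widehat d$ on $H_\lambda$ (the analogue of the metric introduced in Section~3, which forbids edges of the Cayley graph of $H_\lambda$ itself) linearly in terms of the boundary length and $\mathcal Q$-area. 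Local finiteness of $(H_\lambda,\widehat d)$ then confines $h_n$ to a finite set. Your final paragraph correctly gestures at BCP as the needed tool, but the explicit mechanism you propose -- counting combinatorial types -- is not the right one and would not close the gap. Two smaller remarks: (i) the ``connected case'' needs no pigeonhole -- the $h_n$-edge runs from $1$ to $h_n$ and the $(h_n')^{-1}$-edge from $h_n g$ to $g$, so if these lie in a common $H_\lambda$-coset then $h_n g\in H_\lambda$, hence $g\in H_\lambda$, an outright contradiction; (ii) the assertion that the $h_n$- and $(h_n')^{-1}$-edges are each single $H_\lambda$-components presupposes that $U$ neither begins nor ends with an $H_\lambda$-letter, a minor technicality that should be arranged.
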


\cite{DS} gives a characterization of the asymptotic cones of relatively hyperbolic groups, and an immediate consequence of this is the following.

\begin{lem}[\cite{DS}, Corollary 1.14]\label{relsub}
If a group $G$ is hyperbolic relative to $\{H_1,..., H_m\}$, and each $H_i$ is hyperbolic relative to a collection of subgroups $\{H_1^i,..., H_{n_i}^i\}$, then G is hyperbolic relative to $\{H_j^i \;|\; 1\leq i\leq m, 1\leq j\leq n_i\}$.
\end{lem}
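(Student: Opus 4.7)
The plan is to verify Osin's isoperimetric characterization directly: exhibit a finite relative presentation of $G$ with respect to the combined collection $\{H_j^i\}$ and verify a linear relative isoperimetric inequality.

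For the relative presentation, take $Y = X \cup \bigcup_{i=1}^m X_i$ as the (finite) relative generating set, where $X_i$ is a finite relative generating set of $H_i$ with respect to $\{H_j^i\}_{j}$. Only finitely many $H_i$-letters appear across all words in the finite set $\mathcal{Q}$; for each such $h \in H_i$, fix a word $u_h$ in $X_i \cup \bigsqcup_j(H_j^i \setminus\{1\})$ representing $h$ in $H_i$. Let $\mathcal{Q}^\flat$ be the finite set of words obtained from $\mathcal{Q}$ by substituting $u_h$ for each occurrence of $h$, and put $\mathcal{Q}' = \mathcal{Q}^\flat \cup \bigcup_i \mathcal{Q}_i$, which is finite. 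The canonical surjection $F' = (\ast_{i,j} H_j^i) \ast F(Y) \to F = (\ast_i H_i) \ast F(X)$ has kernel $\ll \bigcup_i \mathcal{Q}_i \rr^{F'}$, while $F / \ll \mathcal{Q} \rr^F = G$ and $\mathcal{Q}^\flat$ maps onto $\mathcal{Q}$; hence $\langle Y,\, \{H_j^i\} \mid \mathcal{Q}' \rangle$ is a finite relative presentation of $G$ relative to $\{H_j^i\}$.

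For the isoperimetric inequality, let $W$ be a word of length $n$ in the new alphabet representing $1$ in $G$. Form a word $W^\flat$ in the old alphabet $X \cup \mathcal{H}$ by replacing each $X_i$- or $H_j^i$-letter of $W$ with the single $H_i$-letter representing the same group element, so $\|W^\flat\| = n$. By relative hyperbolicity of $G$ over $\{H_i\}$, there is a van Kampen diagram $\Delta$ over the old presentation with boundary label $W^\flat$ and $N_{\mathcal{Q}}(\Delta) \leq L n$. Transform $\Delta$ into a diagram $\Delta'$ over the new presentation by subdividing each $H_i$-edge labeled $h$ into a path in the new alphabet representing $h$ in $H_i$: on boundary edges of $\Delta$ use the length-one word matching the corresponding letter of $W$; on the boundary of each $\mathcal{Q}$-cell use the chosen $u_h$; on remaining interior edges use a fixed word. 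Under this subdivision each $\mathcal{Q}$-cell becomes a $\mathcal{Q}^\flat$-cell, giving at most $L n$ such cells in $\Delta'$. Each $\mathcal{S}$-cell of $\Delta$ becomes a region whose boundary represents $1$ in $\ast_i H_i$; partitioning this boundary into $H_i$-syllables and applying the relative isoperimetric inequality of each $H_i$ fills the region using $\mathcal{Q}_i$-cells (and $\mathcal{S}'$-cells, which are not counted) in number linear in the syllable length.

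The main obstacle is bounding the total length of $\mathcal{S}$-cell boundaries in $\Delta$, which is not directly constrained by the bound $N_{\mathcal{Q}}(\Delta) \leq L n$. I would overcome this by first reducing to diagrams $\Delta$ of minimal $\mathcal{S}$-complexity — merging adjacent $\mathcal{S}$-cells, which is permitted since $\mathcal{S}$ consists of all words representing $1$ in $\ast_i H_i$ — so that every $\mathcal{S}$-cell boundary is adjacent either to $\partial \Delta$ or to some $\mathcal{Q}$-cell. Since $\|\partial \Delta\| = n$ and the $\mathcal{Q}$-cells have uniformly bounded perimeter (relators from the finite set $\mathcal{Q}$), the total $\mathcal{S}$-cell perimeter is $O(n)$. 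Combining this with the bounded lengths $\|u_h\|$ along $\mathcal{Q}$-cell boundaries and careful bookkeeping of the subdivisions of interior $H_i$-edges yields $N_{\mathcal{Q}'}(\Delta') = O(n)$, giving the required linear bound $Area^{rel}(W) \leq C n$ and completing the verification.
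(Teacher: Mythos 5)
The paper does not prove this lemma at all; it cites it as Corollary~1.14 of Drutu--Sapir, obtained there as an immediate consequence of their asymptotic-cone characterization of relative hyperbolicity (tree-graded asymptotic cones). Your proposal is therefore a genuinely different route: a direct verification of Osin's isoperimetric criterion, elementary in the sense that it avoids ultralimits entirely and works only with the finite relative presentations. The set-up you give (the generating set $Y$, the substitution $\mathcal Q\mapsto\mathcal Q^\flat$, the word $W^\flat$, the subdivision of $H_i$-edges, and the filling of $\mathcal S$-regions syllable by syllable with $\mathcal Q_i$-cells in linear number) is the right framework, and you correctly identify the crux: controlling how much new relative area the $\mathcal S$-cells generate after subdivision.

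However, the step you propose to close that gap does not work as stated. You claim that merging adjacent $\mathcal S$-cells (permitted because $\mathcal S$ is closed under combination) yields a diagram in which ``every $\mathcal S$-cell boundary is adjacent either to $\partial\Delta$ or to some $\mathcal Q$-cell,'' whence total $\mathcal S$-perimeter is $O(n)$. This is false when the $\mathcal S$-cells form an annular region, e.g.\ around a $\mathcal Q$-cell: such an annulus cannot be merged into a single $2$-cell, and if you cut it into a disk the cut edge is adjacent to the $\mathcal S$-cell on \emph{both} sides, so it is not accounted for by $\|\partial\Delta\|+\sum_\Pi\|\partial\Pi\|$, and its length is not a priori bounded by $n$. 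What is actually controlled by your count is only $\|\partial\Sigma\|$ for the topological union $\Sigma$ of all $\mathcal S$-cells, not the sum of perimeters over the cellulation of $\Sigma$. To repair this you need a further ingredient: replace each (possibly annular) component of $\Sigma$ by an \emph{efficient} filling before subdividing --- for instance, use the fact that in the free product $\ast_i H_i$ with alphabet $\mathcal H$ two conjugate elements admit a conjugator of length at most $\max$ of their lengths, so that each annular $\mathcal S$-component can be replaced by a thin annulus whose cut has length $O(\|\partial\Sigma\|)$; then the total perimeter after re-cellulation is indeed $O(n)$ and the rest of your argument goes through. Without some argument of this kind the claimed $O(n)$ bound on $\mathcal S$-perimeter, and hence the linear count of $\mathcal Q_i$-cells, is not justified.
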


The next lemma is a particular case of \cite[Theorem 2.40]{Osi06a}.
\begin{lem}\label{exhyp}
Suppose that a group $G$ is hyperbolic relative to a collection of
subgroups $\Hl \cup \{ S_1, \ldots , S_m\} $, where $S_1, \ldots ,
S_m $ are finitely generated and hyperbolic in the ordinary
(non--relative) sense. Then $G$ is hyperbolic relative to $\Hl $.
\end{lem}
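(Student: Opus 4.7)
The plan is to write down an explicit finite relative presentation of $G$ with respect to $\Hl$ and verify the linear relative isoperimetric inequality. For each $i$, let $Y_i$ be a finite generating set of $S_i$ and fix a finite presentation $\langle Y_i \mid \mathcal{R}_i\rangle$ witnessing the ordinary linear isoperimetric inequality for the hyperbolic group $S_i$. Take $X' = X\cup Y_1 \cup \cdots \cup Y_m$. For each relation $Q \in \mathcal{Q}$, produce $\widetilde{Q}$ by substituting each letter $s \in S_i\setminus\{1\}$ appearing in $Q$ with a fixed word $u_s$ in $Y_i^{\pm 1}$ representing $s$; let $\widetilde{\mathcal{Q}}$ be the resulting finite set. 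Set $\mathcal{Q}'=\widetilde{\mathcal{Q}}\cup\mathcal{R}_1\cup\cdots\cup\mathcal{R}_m$. This yields a finite relative presentation of $G$ with respect to $\Hl$.

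Now let $W$ be a word in $X'\cup \mathcal{H}_0$, where $\mathcal{H}_0 = \bigsqcup_\lambda (H_\lambda\setminus\{1\})$, representing $1$ in $G$, and write $\mathcal{Z} = \bigsqcup_i (S_i\setminus\{1\})$. I would first compress $W$ to a word $W^*$ in $X\cup \mathcal{H}_0\cup \mathcal{Z}$ by replacing every maximal subword in $Y_i^{\pm 1}$ by the single letter of $\mathcal{Z}$ with equal value in $S_i$; clearly $\|W^*\|\le\|W\|$. The hypothesis that $G$ is hyperbolic relative to $\Hl\cup\{S_1,\ldots,S_m\}$ yields a van Kampen diagram $\Delta$ over the corresponding full presentation with boundary label $W^*$ and at most $L\|W\|$ many $\mathcal{Q}$-cells. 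A standard reduction inside the free product $(\ast_\lambda H_\lambda)\ast S_1\ast\cdots\ast S_m$ allows us to assume the remaining cells each lie in a single factor, so each is either an $H_\lambda$-cell (boundary in $H_\lambda\setminus\{1\}$) or an $S_i$-cell (boundary in $S_i\setminus\{1\}$).

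The crucial step is to control the boundaries of the $S_i$-cells. For each $i$, let $\Theta_i^{(1)},\Theta_i^{(2)},\ldots$ denote the maximal connected subdiagrams made of $S_i$-cells. Any edge on $\partial \Theta_i^{(j)}$ comes either from $\partial\Delta$ or from the boundary of some $\mathcal{Q}$-cell, since labels prevent it from being shared with any $H_\lambda$-cell, any $X$-edge, or any $S_{i'}$-cell with $i'\ne i$. This gives the key estimate $\sum_{i,j} \l(\partial \Theta_i^{(j)})\le \|W^*\| + M\cdot L\|W\|$, where $M=\max_{Q\in\mathcal{Q}}\|Q\|$, so the left side is $O(\|W\|)$. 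I would then replace each $\Theta_i^{(j)}$ by a van Kampen diagram over $\langle Y_i\mid \mathcal{R}_i\rangle$: boundary letters inherited from $\partial\Delta$ expand back to the $Y_i$-subwords of $W$ that produced them (contributing at most $\|W\|$ in total across all subdiagrams), while boundary letters coming from $\mathcal{Q}$-cells expand to the fixed words $u_s$ of bounded length (contributing $O(\|W\|)$ in total). Hyperbolicity of $S_i$ then fills each such $Y_i$-labeled boundary with linearly many $\mathcal{R}_i$-cells. Summing, and using the $\widetilde{Q}$'s in place of the original $\mathcal{Q}$-cells, the resulting diagram witnesses $W=1$ over the new presentation with at most $L'\|W\|$ many $\mathcal{Q}'$-cells, giving the desired linear relative isoperimetric inequality. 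The main obstacle is precisely this perimeter bound: an a priori rewrite of every $\mathcal{Z}$-letter into $Y_i^{\pm 1}$ would lose all size control, and it is essential to defer the expansion of letters inherited from $\partial\Delta$ so that they expand back into the original $Y_i$-subwords of $W$.
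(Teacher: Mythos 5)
The paper does not prove Lemma~\ref{exhyp}; it cites it as a particular case of \cite[Theorem 2.40]{Osi06a}, so I am evaluating your argument on its own terms. Your strategy --- build an explicit finite relative presentation of $G$ with respect to $\Hl$ by unfolding each $S_i$ through a finite presentation $\langle Y_i\mid\mathcal{R}_i\rangle$, and then transport a van Kampen diagram over the old presentation to one over the new by surgery --- is the natural route given the isoperimetric definition used in the paper, and the central idea is correct: you must defer the expansion of $S_i$-letters so that edges on $\partial\Delta$ expand back to the $Y_i$-subwords of $W$ (total $\le\|W\|$), while edges supported on $\mathcal{Q}$-cells expand to the fixed, bounded words $u_s$. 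The key combinatorial observation, that $\partial\Theta_i^{(j)}$ can only meet $\partial\Delta$ or $\mathcal{Q}$-cells (by disjointness of the alphabets $S_i\setminus\{1\}$, $S_{i'}\setminus\{1\}$, $H_\lambda\setminus\{1\}$, $X$), is also right and gives the needed $O(\|W\|)$ bound on the total expanded boundary length.

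There are two technical points you do not address, and both are needed. First, a maximal connected union $\Theta_i^{(j)}$ of $S_i$-cells need not be simply connected, which your replacement step tacitly assumes: both the claim that $\lab(\partial\Theta_i^{(j)})$ represents $1$ in $S_i$ and the filling by a disk diagram over $\langle Y_i\mid\mathcal{R}_i\rangle$ require the region to be a disk. This is repairable: if $\Theta_i^{(j)}$ encloses a hole $H$, then $\lab(\partial H)$ is a word in $S_i\setminus\{1\}$ that is trivial in $G$ and hence (since $S_i$ embeds in $G$) trivial in $S_i$, so the contents of $H$, which do not meet $\partial\Delta$, can be excised and replaced by a disk of single-factor cells labeled in $S_i\setminus\{1\}$, merging $H$ into $\Theta_i^{(j)}$; iterating removes all holes and does not increase the $\mathcal{Q}$-cell count. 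Second, you do not account for $S_i$-labeled edges of $\partial\Delta$ shared directly with a $\mathcal{Q}$-cell but not contained in any $\Theta_i^{(j)}$: there the $\widetilde{Q}$-cell wants the expansion $u_s$ while the boundary wants the $Y_i$-subword of $W$, and these two words generally differ. You must insert a lens between them --- a disk diagram over $\langle Y_i\mid\mathcal{R}_i\rangle$ filling $u_s^{-1}\cdot(\text{subword of }W)$ --- and sum these contributions, which is again $O(\|W\|)$. Finally, you assert but do not verify that the new data is a relative presentation in the sense of Definition~\ref{relpresdef}, i.e., that $\ll\widetilde{\mathcal{Q}}\cup\bigcup_i\mathcal{R}_i\rr$ is the full kernel of $(\ast_\lambda H_\lambda)\ast F(X')\twoheadrightarrow G$; this does follow from factoring through $(\ast_\lambda H_\lambda)\ast(\ast_i S_i)\ast F(X)$, but it is a step, not a triviality. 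With these three points added, the argument is complete.
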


Recall that a metric space $M$ is {\it $\delta $--hyperbolic} for some $\delta \ge 0$ (or simply {\it hyperbolic}) if for any geodesic triangle $T$ in $M$, any side of $T$ belongs to the union of the closed $\delta $--neighborhoods of the other two sides. As mentioned above, $G$ is an ordinary hyperbolic group if $G$ is
hyperbolic relative to the trivial subgroup. An equivalent
definition says that $G$ is hyperbolic if it is generated by a
finite set $X$ and the Cayley graph $\Gamma (G, X)$ is a hyperbolic metric space.
In the relative case these approaches are not equivalent, but we
still have the following, which will provide one of the main tools for looking at small cancellation quotients of relatively hyperbolic groups.

\begin{thm}[\cite{Osi06a}, Theorem 1.7]\label{hms}
Let $G$ be a group hyperbolic relative to a collection of subgroups $\Hl $, $X$ a finite relative generating set for $G$. Then the Cayley graph $\Gamma(G, X\cup \mathcal H)$ is a hyperbolic metric space.
\end{thm}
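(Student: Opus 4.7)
My approach is to deduce hyperbolicity from the linear relative isoperimetric inequality by replacing the full (infinite) presentation $\langle X\cup\mathcal H\mid\mathcal S\cup\mathcal Q\rangle$ with an auxiliary presentation whose defining relators all have uniformly bounded length, and then showing that this auxiliary presentation has a linear Dehn function. Concretely, adjoin to $\mathcal Q$ the set $\mathcal T$ of all length-$3$ words $uvw$ with $u,v,w\in\mathcal H$ and $uvw=1$ in some $H_\lambda$, and set $K=\max\{3,\max_{Q\in\mathcal Q}\|Q\|\}$. Then $G=\langle X\cup\mathcal H\mid\mathcal R\rangle$ with $\mathcal R:=\mathcal Q\cup\mathcal T$, and every element of $\mathcal R$ has length at most $K$. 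The desired conclusion then follows from the classical combinatorial characterization of hyperbolicity: a Cayley graph (with arbitrary generating set) admitting a linear isoperimetric inequality with uniformly bounded-perimeter filling cells is a hyperbolic metric space.

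To establish the linear Dehn function over $\mathcal R$, let $W$ be a word of length $n$ in $X\cup\mathcal H$ representing $1$ in $G$. The definition of relative hyperbolicity supplies a van Kampen diagram $\Delta$ over $\langle X\cup\mathcal H\mid\mathcal S\cup\mathcal Q\rangle$ with $\lab(\partial\Delta)\equiv W$ and with at most $Ln$ $\mathcal Q$-cells. Since $\mathcal S$ is closed under the operation of merging two cells across a shared edge (concatenating two words in $\mathcal H$ trivial in $F$ yields a word in $\mathcal H$ trivial in $F$), I may assume that $\Delta$ is \emph{reduced}, meaning no two $\mathcal S$-cells share an edge. Under reducedness every edge on the boundary of an $\mathcal S$-cell lies either on $\partial\Delta$ or on the boundary of a $\mathcal Q$-cell, giving
$$
\sum_{C\text{ an }\mathcal S\text{-cell}}|\partial C|\ \le\ n+KLn\ =\ (1+KL)n.
$$
Next I triangulate each $\mathcal S$-cell into $\mathcal T$-cells. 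Its boundary is labeled by a word in $\mathcal H$ trivial in the free product $F=\ast_\lambda H_\lambda$, and by the free-product normal form such a word can be reduced to the empty word by iteratively merging two adjacent letters belonging to a common $H_\lambda$; each such merge corresponds to gluing in one $\mathcal T$-triangle. Hence an $\mathcal S$-cell of perimeter $k$ is triangulated using at most $k$ cells from $\mathcal T$, and summing yields at most $(1+KL)n$ new $\mathcal T$-cells. The resulting diagram over $\langle X\cup\mathcal H\mid\mathcal R\rangle$ therefore has at most $(1+2KL)n$ cells, each of perimeter at most $K$.

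The steps I expect to be most delicate are the reducedness argument and the triangulation. For reducedness one has to check that merging two $\mathcal S$-cells along a common edge preserves the planar disk structure of $\Delta$ and leaves the $\mathcal Q$-cell count untouched; this is a standard but careful surgery on the diagram. The triangulation is an induction on perimeter $k$ where the base cases $k\le 3$ are immediate and the inductive step uses the existence of two adjacent letters from a common $H_\lambda$ (forced by free-product triviality) to descend to perimeter $k-1$, with a minor \emph{digon collapse} required when the merged product happens to equal the identity. Once these two combinatorial points are pinned down, the final passage from a linear Dehn function with uniformly bounded-perimeter relators to $\delta$-hyperbolicity of $\Gamma(G,X\cup\mathcal H)$ is standard and works verbatim for non-locally-finite Cayley graphs, since the thin-triangle/linear-filling equivalence is purely metric and does not require local finiteness.
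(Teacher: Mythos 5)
This theorem is cited from \cite{Osi06a} and is not proved in the present paper, so there is no ``paper's own proof'' to compare against directly; I will evaluate your proposal against the argument in Osin's memoir, which it in fact closely parallels. Osin's proof likewise proceeds by replacing the relative presentation $\langle X\cup\mathcal H\mid\mathcal S\cup\mathcal Q\rangle$ with an auxiliary presentation whose relators have uniformly bounded length, establishes a linear isoperimetric inequality over that presentation, and then invokes the classical combinatorial criterion (which, as you correctly observe, is a purely metric statement and does not require local finiteness of the Cayley graph). So your overall strategy is the right one, not a genuinely different route.

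That said, the bound $\sum_C |\partial C|\le n+KLn$ over $\mathcal S$-cells $C$, which is the crux of your cell count, needs more than the single reducedness condition ``no two $\mathcal S$-cells share an edge.'' You also have to rule out an interior edge having the \emph{same} $\mathcal S$-cell on both sides (a slit), and a boundary edge of an $\mathcal S$-cell being a spur not adjacent to any $\mathcal Q$-cell or to $\partial\Delta$; in either case the corresponding side contributes to the left-hand side without appearing on the right. Moreover, the merge surgery you use to enforce reducedness can itself create exactly these pathologies: when two $\mathcal S$-cells share more than one edge, deleting one shared edge produces a region whose boundary is not a simple closed curve and which may contain a slit. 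The standard fix is Olshanskii-style $0$-refinement (collapse slits and spurs, re-subdivide so that every cell has an embedded boundary circle), performed \emph{before} invoking the edge count; this is consistent with what you flag as ``standard but careful surgery,'' but it is worth naming explicitly, because the naive formulation of reducedness does not by itself deliver the inequality. Similarly, in the triangulation step the merged product may be trivial, producing a digon rather than a $\mathcal T$-triangle; this is a free reduction rather than an $\mathcal R$-cell and must be collapsed, which is why the relator set $\mathcal T$ of length-$3$ words (with no length-$2$ relators) still presents $G$. With these points spelled out, the argument is sound and matches the approach in \cite{Osi06a}.
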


This theorem will allow us to apply the following useful lemma, which appears in \cite[Corollary 3.3]{Osi10} and can be derived from basic properties of hyperbolic spaces (see, for example, \cite{BH}). A path $p$ in a metric space is called {\it $(\lambda, c)$--quasi--geodesic} for some $\lambda > 0$, $c\ge 0$,  if
$$\d(q_-, q_+)\ge \lambda l(q)-c$$ for any subpath $q$ of $p$.

\begin{lem}\label{qgq}
For any $\delta \ge 0$, $\lambda > 0$, $c\ge 0$, there exists a
constant $K=K(\delta , \lambda , c)$ with the following property.
Let $Q$ be a quadrangle in a $\delta$--hyperbolic space whose
sides are $(\lambda , c)$--quasi--geodesic. Then each side of $Q$
belongs to the closed $K$--neighborhood of the
union of the other three sides.
\end{lem}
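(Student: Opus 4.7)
The proof is a standard reduction from quadrangles to triangles via the stability of quasi-geodesics in hyperbolic spaces. The plan is as follows.

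The key input is the Morse lemma (stability of quasi-geodesics): in a $\delta$-hyperbolic space, there is a constant $H = H(\delta, \lambda, c)$ such that every $(\lambda, c)$-quasi-geodesic path lies within Hausdorff distance $H$ of any geodesic joining its endpoints. I would cite this from \cite{BH}. Using this, I would replace each of the four quasi-geodesic sides of $Q$ by an actual geodesic joining the same pair of vertices, obtaining a geodesic quadrangle $Q'$ whose sides are within $H$ of the corresponding sides of $Q$.

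Next, I would split $Q'$ into two geodesic triangles $T_1$ and $T_2$ by adjoining a diagonal geodesic $d$ between two opposite vertices. Now consider a point $p$ on a quasi-geodesic side $s$ of $Q$. By the Morse lemma there is a point $p'$ on the corresponding geodesic side $s'$ of $Q'$ with $\d(p,p') \le H$. The side $s'$ belongs to one of the triangles, say $T_1$. By $\delta$-hyperbolicity (geodesic triangles are $\delta$-thin), $p'$ lies within distance $\delta$ of a point $q'$ on the union of the other two sides of $T_1$. If $q'$ lies on a non-diagonal side, then applying the Morse lemma once more gives a point $q$ on the corresponding quasi-geodesic side of $Q$ with $\d(q',q) \le H$, and hence $\d(p,q) \le 2H + \delta$. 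Otherwise $q'$ lies on the diagonal $d$, which is itself a side of $T_2$; by $\delta$-thinness of $T_2$, $q'$ is within $\delta$ of a point $r'$ on one of the other two sides of $T_2$, and the Morse lemma then provides a point $r$ on the corresponding quasi-geodesic side of $Q$ with $\d(r',r)\le H$. In this case $\d(p,r) \le 2H + 2\delta$.

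Setting $K = 2H(\delta,\lambda,c) + 2\delta$ handles both cases simultaneously, and this constant depends only on $\delta$, $\lambda$, and $c$. The only step with any substance is the Morse lemma; everything else is bookkeeping with the $\delta$-thinness of triangles. Since the Morse lemma in $\delta$-hyperbolic spaces is standard (for instance, Theorem III.H.1.7 in \cite{BH}), I would simply invoke it and keep the argument to half a page.
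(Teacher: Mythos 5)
Your argument is correct and is precisely the derivation the authors have in mind: the paper does not prove Lemma~\ref{qgq} itself, but cites it as \cite[Corollary 3.3]{Osi10} and remarks that it ``can be derived from basic properties of hyperbolic spaces (see, for example, \cite{BH}),'' which is exactly the stability-of-quasi-geodesics plus triangulation argument you give. The bookkeeping yielding $K = 2H(\delta,\lambda,c) + 2\delta$ is sound (with the minor terminological caveat that the paper's definition of $\delta$-hyperbolicity is the slim-triangles condition, which is what you in fact use), so there is nothing to add.
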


\paragraph{\bf Loxodromic elements and elementary subgroups.}
We call an element $g\in G$ {\it parabolic} if it is conjugate to an element of one of the
parabolic subgroups. A non-parabolic elements of infinite order are called {\it loxodromic}. Broadly speaking, most algebraic properties of elements in hyperbolic groups also hold for loxodromic elements of relatively hyperbolic groups. An example of this is the following lemma; recall that a group is {\it elementary} if it contains a cyclic subgroup of finite index.

\begin{lem}[\cite{Osi06b}] \label{Eg}
Suppose a group $G$ is hyperbolic relative to a collection of subgroups $\Hl$. Let $g$ be a
loxodromic element of $G$. Then the following conditions hold:
\begin{enumerate}
\item[(a)] There is a unique maximal elementary
subgroup $E_G(g)\le G$ containing $g$.

\item[(b)] $E_G(g)=\{ h\in G\mid \exists\, m\in \mathbb{N}~ \mbox{such that}~ h^{-1}g^mh=g^{\pm m}\} $.

\item[(c)] The group $G$ is hyperbolic relative to the collection
$\Hl\cup \{ E_G(g)\} $.
\end{enumerate}
\end{lem}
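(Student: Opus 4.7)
The plan is to prove (a) and (b) together by defining
\[
E := \{h \in G \mid \exists\, m \in \N \text{ such that } h^{-1} g^m h = g^{\pm m}\}
\]
and showing that $E$ is the unique maximal elementary subgroup of $G$ containing $g$. That $E$ is a subgroup is direct: if $h_i^{-1} g^{m_i} h_i = g^{\varepsilon_i m_i}$ with $\varepsilon_i \in \{\pm 1\}$ for $i = 1, 2$, raising each identity to an appropriate power produces the common exponent $m = m_1 m_2$, yielding $(h_1 h_2)^{-1} g^m (h_1 h_2) = g^{\varepsilon_1 \varepsilon_2 m}$; closure under inverses is symmetric. The inclusion $\langle g \rangle \subseteq E$ is trivial. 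For maximality, any elementary subgroup $F \ni g$ admits a finite-index normal infinite cyclic subgroup $Z$ containing some $g^n$, and conjugation by any $h \in F$ acts on $Z \cong \Z$ by $\pm 1$, so $h^{-1} g^n h = g^{\pm n}$ and $F \subseteq E$. This yields both the description of $E_G(g)$ in (b) and the uniqueness and maximality claim of (a), pending the fact that $E$ is itself elementary.

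The crux is elementarity of $E$. By Theorem \ref{hms}, $\G$ is $\delta$-hyperbolic; loxodromicity of $g$ is equivalent to the orbit $\{g^n\}$ being a $(\lambda, c)$-quasigeodesic $\alpha$ in $\G$, with two distinct limit points $g^{\pm\infty} \in \partial\G$. For each $h \in E$, the identity $hg^m = g^{\pm m} h$ forces $h \cdot g^{+\infty} \in \{g^{\pm\infty}\}$, producing a homomorphism $E \to \Z/2\Z$ whose kernel $E^+$ fixes both boundary points. For $h \in E^+$, the translated quasi-axis $h\cdot\alpha$ shares both endpoints with $\alpha$, so by quasi-geodesic stability (Lemma \ref{qgq} applied to a quadrangle with corners $1, g^n, hg^n, h$) it lies within a bounded Hausdorff distance $K$ of $\alpha$. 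Hence every $h \in E^+$ lies within $\G$-distance $K$ of some power of $g$; to conclude $[E^+:\langle g\rangle]<\infty$ one counts how many $G$-elements in a $K$-ball of $\G$ can stabilize $\{g^{\pm\infty}\}$, which is where Lemma \ref{malnorm} enters, forcing the parabolic-letter content of such elements to be drawn from finitely many bounded intersections of parabolic cosets.

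For (c), once elementarity is established, $E := E_G(g)$ is virtually cyclic, hence hyperbolic in the ordinary sense. The essential input for adjoining $E$ to the peripheral collection is the almost malnormality $|E^t \cap E| < \infty$ for $t \notin E$: any infinite-order element $u$ of the intersection satisfies $u^N = g^a$ and $t^{-1} u^N t = g^b$ for suitable $a, b, N$, and a translation-length comparison in $\G$ gives $|a| = |b|$, so $t^{-1} g^a t = g^{\pm a}$ and (b) forces $t \in E$, a contradiction. One then rewrites each van Kampen diagram over the enlarged presentation into a diagram over $\langle X, \mathcal H \mid \mathcal Q\rangle$ by expanding each $E$-letter into a bounded-length $X \cup \mathcal H$-word; almost malnormality is exactly what keeps the number of resulting $\mathcal Q$-cells linear in the boundary length, so the relative linear isoperimetric inequality persists. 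The principal obstacle throughout is the elementarity step: because $X \cup \mathcal H$ is an infinite generating set, the standard hyperbolic-group argument — relying on local finiteness to count stabilizer elements — breaks down and must be replaced by the finiteness-of-intersection argument indicated above.
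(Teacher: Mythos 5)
The paper does not supply a proof of this lemma; it cites it directly from Osin's paper \cite{Osi06b}, so there is no in-paper argument to compare against. Evaluating your sketch on its own terms: the subgroup-closure computation, the maximality argument via a normal finite-index cyclic subgroup, the boundary-stabilization map $E\to\mathbb Z/2\mathbb Z$, and the translation-length argument for almost malnormality are all sound, and the overall skeleton matches the standard treatment. Two of the steps, however, contain genuine gaps rather than routine omissions.

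First, the step you yourself identify as the crux is not established. Quasi-geodesic stability places each $h\in E^+$ within $\G$-distance $K$ of the orbit $\{g^n\}$, so $[E^+:\langle g\rangle]$ is bounded by $|E^+\cap B_{\G}(K)|$, and the entire content of elementarity is showing this intersection is finite. Your appeal to Lemma \ref{malnorm} does not do this: that lemma controls intersections of parabolic subgroups $H_\lambda\cap H_\mu^t$, whereas here one must bound the number of group elements of bounded $X\cup\mathcal H$-length that conjugate $g^m$ to $g^{\pm m}$, and the bridge between these requires a nontrivial analysis of quasi-geodesics near the axis of $g$ (essentially a BCP-type statement that a short path fellow-traveling the axis cannot penetrate deeply into a single parabolic coset, because $g$ is loxodromic and hence its axis is transverse to the parabolics). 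The phrase ``forcing the parabolic-letter content to be drawn from finitely many bounded intersections'' gestures at the right phenomenon but does not constitute an argument, so (a) and (b) are not actually proved.

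Second, the van Kampen rewriting you propose for (c) cannot work as stated: you claim to expand each $E$-letter into a ``bounded-length $X\cup\mathcal H$-word,'' but elements of $E$ do not have bounded $X\cup\mathcal H$-length. By Lemma \ref{cyc}, $|g^n|_{X\cup\mathcal H}$ grows linearly in $n$, so there is no uniform expansion and no control on the number of $\mathcal S\cup\mathcal Q$-cells created. Almost malnormality of $E$ (which you do establish correctly) is indeed a necessary ingredient, but passing from almost malnormality plus cocompactness to the linear relative isoperimetric inequality for the enlarged peripheral collection requires a more careful argument than a literal letter-by-letter rewriting of diagrams; this is precisely the technical content of the corresponding result in \cite{Osi06b}.
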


The following is an immediate consequence of Lemma \ref{Eg}. Recall that elements $f, g\in G$ are called {\it commensurable} if there exist $k, l\in \Z\setminus\{0\}$ such that $f^k\sim g^l$.
\begin{lem}\label{primcom}
Let $f$ and $g$ be primitive loxodromic elements in a torsion free relatively hyperbolic group. Then $f$ is commensurable with $g$ if and only if $f^{\pm 1}\sim g$.
\end{lem}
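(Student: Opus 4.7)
The backward direction is trivial: if $f^{\pm 1}\sim g$, then taking $k=\pm 1$ and $l=1$ witnesses commensurability. The content is in the forward direction.

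Suppose $f$ and $g$ are commensurable, so there exist nonzero integers $k,l$ and $t\in G$ with $f^k = (tgt^{-1})^l$. Set $h=tgt^{-1}$; then $h$ is still primitive and loxodromic (both notions are preserved by conjugation), and $f^k=h^l$. Since $h$ commutes with $h^l=f^k$, we have $h^{-1}f^kh=f^k$, so by Lemma~\ref{Eg}(b), $h\in E_G(f)$. By the same reasoning $f\in E_G(h)$ (or equivalently one notes $f\in E_G(f)$ trivially, which is all we need).

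The group $E_G(f)$ is elementary, hence contains an infinite cyclic subgroup of finite index. The key structural point is that a \emph{torsion-free} virtually cyclic group must itself be infinite cyclic: passing to the normal core of the cyclic subgroup one gets a finite-index central $\mathbb Z$, and the quotient acts on $\mathbb Z$ through $\operatorname{Aut}(\mathbb Z)=\mathbb Z/2$; the kernel of this action is a torsion-free central extension of a finite group by $\mathbb Z$, forcing it to equal $\mathbb Z$, and the only possible nontrivial extension on top is the infinite dihedral group, which has torsion. So $E_G(f)=\langle e\rangle$ for some $e\in G$.

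Now both $f$ and $h$ lie in $\langle e\rangle$; write $f=e^a$, $h=e^b$. Primitivity of $f$ forces $a=\pm 1$, and likewise primitivity of $h$ forces $b=\pm 1$. Therefore $h=f^{\pm 1}$, i.e.\ $tgt^{-1}=f^{\pm 1}$, which gives $f^{\pm 1}\sim g$. The only delicate ingredient is the structural lemma that $E_G(f)$ is cyclic in the torsion-free setting; everything else is a direct application of Lemma~\ref{Eg}.
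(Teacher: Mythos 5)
Your proof is correct and follows essentially the same route as the paper's one-line argument (``$\langle f\rangle=E_G(f)=E_G(g^x)=\langle g^x\rangle$, thus $f^{\pm 1}=g^x$''): apply Lemma~\ref{Eg}(b) to place the conjugate of $g$ inside $E_G(f)$, use torsion-freeness to see $E_G(f)$ is infinite cyclic, and use primitivity to identify both elements with a generator up to sign. You simply spell out the implicit structural fact that torsion-free virtually cyclic groups are infinite cyclic, which the paper takes for granted.
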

\begin{proof}
If $f^k=(g^l)^x$, then $\langle f\rangle=E_G(f)=E_G(g^x)=\langle g^x\rangle$, thus $f^{\pm 1}= g^x$.
\end{proof}

Since any group will be hyperbolic relative to itself, we will need some non-trivial structure outside of the parabolic subgroups. This will be accomplished with the notion of suitable subgroups.
\begin{defn} \label{suit} A subgroup $S\le G$ is called
{\it suitable} if there exist two non--commensurable loxodromic elements
$s_1, s_2\in S$ such that $E_G(s_1)\cap E_G(s_2)=1$.
\end{defn}

The next lemma is a combination of  \cite[Lemma 2.3]{Osi10} and \cite[Proposition 3.4]{AMO}.
\begin{lem}\label{non-com}
Let $G$ be a group hyperbolic relative to a collection of
subgroups $\Hl $.
\begin{enumerate}
\item
 If $S$ is a suitable subgroup of $G$, then there exist
infinitely many pairwise non--commensurable loxodromic elements $s_1,
s_2, \ldots \in S$ such that for all $i=1,2, \ldots $,
$E_G(s_i)=\langle s_i\rangle $. In particular, $E_G(s_i)\cap
E_G(s_j)=\{ 1\} $ whenever $i\ne j$.
\item
If $G$ is torsion free, then any non-elementary subgroup containing at least one loxodromic element is suitable.
\end{enumerate}
\end{lem}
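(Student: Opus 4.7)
I handle the two claims separately, using the tools on relatively hyperbolic groups developed in Section 2.

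For part (2), pick a loxodromic $g\in H$. Since $H$ is non-elementary, we can choose $h\in H\setminus E_G(g)$. By Lemma \ref{Eg}(c), $G$ is hyperbolic relative to $\Hl\cup\{E_G(g)\}$, and Lemma \ref{malnorm} applied to this enlarged collection, together with torsion-freeness, yields $E_G(g)\cap h^{-1}E_G(g)h=\{1\}$. In the hyperbolic Cayley graph $\Gamma(G,X\cup\mathcal H\cup E_G(g))$ (hyperbolic by Theorem \ref{hms}), the elements $g$ and $g^h$ have distinct invariant quasi-axes $\gamma$ and $h^{-1}\gamma$, so a standard ping-pong argument shows that for sufficiently large $N$, the elements $g^N$ and $(g^h)^N$ freely generate a free subgroup $F\le H$ of loxodromic isometries. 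Setting $s_1=g^N$ and $s_2=g^N(g^h)^N$, I would verify that these are non-commensurable in $G$: the axis of $s_1$ is a single translate of $\gamma$, while the axis of $s_2$ zig-zags between $\gamma$ and $h^{-1}\gamma$, so no $G$-translate of $E_G(s_1)$ can equal $E_G(s_2)$. Finally, $E_G(s_1)\cap E_G(s_2)=\{1\}$ follows because any common nontrivial element would force the primitive roots of $s_1$ and $s_2$ to lie in a common cyclic subgroup, hence $\langle s_1\rangle$ and $\langle s_2\rangle$ to commute --- contradicting their freeness inside $F$.

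For part (1), given $s_1,s_2\in S$ witnessing suitability, I would apply Lemma \ref{Eg}(c) twice to make $G$ hyperbolic relative to $\Hl\cup\{E_G(s_1),E_G(s_2)\}$, and work in the corresponding hyperbolic Cayley graph. Consider alternating words
\[
W_k\equiv s_1^{a_1^{(k)}}s_2^{b_1^{(k)}}\cdots s_1^{a_{n_k}^{(k)}}s_2^{b_{n_k}^{(k)}},
\]
with all exponents nonzero. Lemma \ref{qgq} together with the triviality $E_G(s_1)\cap E_G(s_2)=1$ ensures that such words label uniform quasi-geodesics once $|a_i^{(k)}|,|b_i^{(k)}|\ge 2$ (say), so the elements $t_k$ they represent are loxodromic. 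I would then pick the exponent sequences $(a_i^{(k)}),(b_i^{(k)})$ to be combinatorially rigid --- e.g., strictly increasing sequences of distinct positive integers, with pairwise distinct initial segments for distinct $k$ --- so that the syllable labellings of the axes of the $t_k$ admit no nontrivial self-symmetries apart from powers of $t_k$ (yielding $E_G(t_k)=\langle t_k\rangle$) and no cross-isomorphisms between distinct axes (yielding pairwise non-commensurability, via Lemma \ref{primcom}).

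The main obstacle is this rigidity analysis: translating combinatorial rigidity of the exponent sequences into the algebraic statements $E_G(t_k)=\langle t_k\rangle$ and pairwise non-commensurability of the $t_k$. Any element of $E_G(t_k)\setminus\langle t_k\rangle$, or any conjugator realizing a commensurability $t_j^p\sim t_k^{\pm q}$, must induce a label-preserving symmetry of the quasi-geodesic axis, which the chosen exponent sequences combinatorially preclude. Lemma \ref{qgq} is crucial here to ensure that the true axis stays close enough to the path literally spelled by $W_k$ that its syllable structure is canonical and cannot be distorted away.
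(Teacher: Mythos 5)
The paper does not prove Lemma \ref{non-com} at all: it cites it as ``a combination of \cite[Lemma 2.3]{Osi10} and \cite[Proposition 3.4]{AMO}.'' Your proposal is therefore a from-scratch reconstruction, which is a reasonable thing to attempt, and parts of it are in the right spirit. However, it contains one concrete error and leaves the hardest step unresolved.

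The error is in part (2). You set up the ping-pong in the graph $\Gamma(G,X\cup\mathcal H\cup E_G(g))$. But once you pass to the relative hyperbolic structure $\Hl\cup\{E_G(g)\}$ (which is what Lemma \ref{Eg}(c) gives), the subgroup $E_G(g)$ is a parabolic subgroup, so $g\in E_G(g)$ is an \emph{elliptic} isometry of that graph: $|g^n|_{X\cup\mathcal H\cup E_G(g)}\le 1$ for all $n$, and there is no quasi-axis $\gamma$ to ping-pong with. The malnormality statement $E_G(g)\cap h^{-1}E_G(g)h=\{1\}$ is fine, since it is an algebraic consequence of Lemmas \ref{Eg}(c) and \ref{malnorm} independent of any choice of Cayley graph; but the geometric part of your argument must instead be carried out in $\Gamma(G,X\cup\mathcal H)$, where $g$ and $g^h$ really are loxodromic with quasi-axes. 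There, a ping-pong on endpoints at infinity plus a local-to-global (quasi-geodesic stability) argument is needed to conclude both that $\langle g^N,(g^h)^N\rangle$ is free \emph{and} that the element $g^N(g^h)^N$ is again loxodromic rather than parabolic --- being of infinite order in a free subgroup is not by itself enough, since relatively hyperbolic groups can have parabolic elements of infinite order. Your one-line assertion that the free group consists of loxodromic isometries is precisely the point that requires proof and is currently unsupported.

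For part (1), your plan --- alternating syllable words in $s_1$ and $s_2$, made rigid by a careful choice of exponent sequences, then deducing $E_G(t_k)=\langle t_k\rangle$ and pairwise non-commensurability from the absence of label-preserving symmetries of the quasi-axes --- is essentially the strategy of \cite[Lemma 2.3]{Osi10}. But you explicitly stop before carrying out the ``rigidity analysis,'' which is exactly where the work lives: one must use quasi-geodesic stability (Lemma \ref{qgq}) and the BCP-type control coming from relative hyperbolicity to turn a purported relation $x^{-1}t_k^m x=t_k^{\pm m}$ or $x^{-1}t_j^p x=t_k^{\pm q}$ into a forced matching of syllable patterns, and then derive a contradiction from the combinatorics of the exponents. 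As written this is a plan, not a proof. Since the paper itself relies on the cited references for precisely these two points, if you want a self-contained argument you would need to supply the missing ping-pong details for (2) and the full syllable-matching argument for (1), or else simply cite the two sources as the paper does.
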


\paragraph{\bf HNN-extensions and relative hyperbolicity.}
Given a group $G$ containing two isomorphic subgroups $A$ and $B$, the HNN-extension $G\ast_{A^t=B}$ is the group given by
\[
G\ast_{A^t=B}=\langle G,t\; |\; t^{-1}at=\phi(a),\; a\in A\rangle
\]
where $\phi\colon A\to B$ is an isomorphism. Recall that for a word $W$ in the alphabet $\{G\setminus\{1\}, t\}$, a {\it pinch} (in the HNN-extension $G\ast_{A^t=B}$) is a subword of the form $t^{-1}at$ with $a\in A$ or $tbt^{-1}$ with $b\in B$.  a word $g_0t^{\e_0}g_1t^{\e_1}...g_{n-1}t^{\e_{n-1}}g_n$, where each $g_i\in G$ and each $\e_i=\pm 1$, is called {\it reduced} if there are no pinches. Given such a word, we define its  {\it t-length} as the number of occurrences of the letters $t$ and $t^{-1}$. In $G\ast_{A^t=B}$, any pinch can be replaced by a single element of $G$. It follows that each element $w\in G\ast_{A^t=B}$ is equal to a reduced word. The converse to this statement is known as the Britton Lemma (see \cite[Ch. 4, Sec.2]{LS}).

\begin{lem}[Britton Lemma]\label{BL}
Let  $W$ be a word in $\{G\setminus\{1\}, t\}$ with $t$-length at least $1$ and no pinches. Then $W\neq 1$ in $G\ast_{A^t=B}$.
\end{lem}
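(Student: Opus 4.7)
This is a classical result, and my plan is to deduce it from the normal form theorem for HNN-extensions. Fix right transversals $T_A$ and $T_B$ of $A$ and $B$ in $G$, each containing the identity. Call an expression $g_0 t^{\e_1} g_1 \cdots t^{\e_n} g_n$ a \emph{normal form} if $g_0 \in G$ is arbitrary and, for each $i \geq 1$, $g_i \in T_A$ when $\e_i = -1$ and $g_i \in T_B$ when $\e_i = +1$, subject to the further condition that $g_i \neq 1$ whenever $\e_i \neq \e_{i+1}$. With these conventions a normal form is automatically pinch-free.

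The first step is existence: every word over $\{G\setminus\{1\},t\}$ equals, in $G \ast_{A^t=B}$, some normal form. Working iteratively, decompose each $g_i$ with $i \geq 1$ as $g_i = h_i g_i'$, where $g_i' \in T_A$ or $T_B$ represents the coset $Ag_i$ or $Bg_i$ (according to $\e_i$) and $h_i$ lies in $A$ or $B$; then use $t^{-1} a t = \phi(a)$ to push $h_i$ across $t^{\e_i}$ and absorb it into the element to its left. This process terminates, and if the input is pinch-free then the $t$-length does not change throughout.

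The main obstacle is uniqueness: two distinct normal forms must represent distinct elements of $G \ast_{A^t = B}$. I would prove this by constructing a faithful action of $G \ast_{A^t = B}$ on the set $\mathcal{N}$ of normal forms. Each $g \in G$ acts by left-multiplying the initial factor $g_0$ by $g$ and re-normalizing; the generator $t^{\pm 1}$ acts by prepending the letter and re-normalizing the front (which may either leave the prepended letter in place or absorb it when it resolves a newly-created pinch). Verifying that these operations respect the defining relations of $G \ast_{A^t = B}$—the relations of $G$ trivially, and the HNN relations $t^{-1} a t = \phi(a)$ by a short case analysis—yields a well-defined action. It is faithful because distinct normal forms, when applied to the trivial normal form $(1)$, produce distinct elements of $\mathcal{N}$.

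The Britton Lemma now follows at once: a pinch-free word $W$ of $t$-length $\geq 1$ rewrites via the existence step to a normal form of positive $t$-length, which differs from the trivial normal form representing $1$. Hence $W \neq 1$ in $G \ast_{A^t = B}$.
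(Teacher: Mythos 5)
The paper does not prove this lemma; it simply cites Lyndon and Schupp \cite{LS}, so there is no in-paper argument to compare against. Your proposal supplies the standard proof via the normal form theorem for HNN-extensions, and the outline is correct. The crucial point linking the normal form theorem to Britton's Lemma---which you correctly identify---is that the rewriting procedure taking a pinch-free word to a normal form preserves $t$-length. That preservation is worth spelling out: when you decompose $g_i = h_i g_i'$ and push $h_i$ across $t^{\e_i}$, the element that lands next to $g_{i-1}$ lies in $A$ or $B$, and multiplying $g_{i-1}$ on the right by such an element does not move it into or out of $A$ or $B$; hence the original absence of pinches guarantees none are created during normalization, and no $t$-cancellation ever occurs. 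With that, a pinch-free word of positive $t$-length normalizes to a normal form of positive $t$-length, which cannot equal the trivial normal form representing $1$. For uniqueness, the van der Waerden-style action on the set $\mathcal{N}$ of normal forms is the classical route; you are right that the only real work is checking compatibility with the HNN relations $t^{-1}at=\phi(a)$, which is routine but not instantaneous and would need to be written out in full. One small check also worth making explicit: your normal form condition that $g_i\neq 1$ whenever $\e_i\neq\e_{i+1}$ is exactly equivalent to pinch-freeness precisely because the transversals $T_A$, $T_B$ meet $A$, $B$ only in the identity.
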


An immediate consequence of this lemma is the well-known fact that $G$ naturally embeds in the HNN-extension $G\ast_{A^t=B}$. When dealing with HNN-extensions of relatively hyperbolic groups we will often use the following corollary of the Britton Lemma. A reduced word $W$ is called {\it cyclicly reduced} if it is not conjugate to an element of shorter $t$-length, or equivalently, no cyclic shift contains a pinch.
\begin{lem}\label{conjHNN}
Let $G$ be a group, $A,B$ isomorphic subgroups of $G$. Suppose that some $f\in G $ is not conjugate to any elements of $A\cup B$ in $G$. Then in the corresponding HNN-extension  $G\ast_{A^t=B}$,
\begin{enumerate}
\item
$f$ is conjugate to another element $g\in G$ in $G\ast_{A^t=B}$ if and only if $f$ and $g$ are conjugate in $G$.
\item
If $f$ is primitive in $G$, then $f$ is primitive in $G\ast_{A^t=B}$.
\end{enumerate}
\end{lem}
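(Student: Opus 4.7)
Both parts are combinatorial arguments about reduced words in the HNN-extension $\widehat G := G\ast_{A^t=B}$, anchored on Britton's Lemma (Lemma \ref{BL}). Part (1) is a direct application; part (2) cyclically reduces $h$ and then either invokes part (1) or derives a $t$-length contradiction.

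\textbf{Part (1).} The ``if'' direction is immediate because $G$ embeds in $\widehat G$. For the converse, assume $g = w^{-1}fw$ with $g\in G$ and write $w$ in reduced form $w \equiv g_0 t^{\e_0} g_1 \cdots t^{\e_{k-1}} g_k$. If $k=0$ then $w\in G$ and we are done. Assume $k\ge 1$ for contradiction. Then the word
\[
W \equiv g_k^{-1} t^{-\e_{k-1}} g_{k-1}^{-1} \cdots t^{-\e_0}\,(g_0^{-1}fg_0)\,t^{\e_0}\cdots t^{\e_{k-1}} g_k\, g^{-1}
\]
represents $1$ in $\widehat G$ and has positive $t$-length, so by Britton's Lemma it must contain a pinch. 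Since $w$ (hence $w^{-1}$) is reduced, no pair of consecutive $t$-letters outside the central pair can be a pinch with the $G$-element between them; hence the pinch must be the innermost pair $t^{-\e_0}(g_0^{-1}fg_0)t^{\e_0}$. This forces $g_0^{-1}fg_0 \in A\cup B$, contradicting the hypothesis on $f$. Thus $k=0$ and $w\in G$.

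\textbf{Part (2).} Suppose $f = h^n$ in $\widehat G$; since $f\neq 1$ we may assume $n\ge 1$. Write $h = v^{-1}h'v$ with $h'$ cyclically reduced, so that $f$ is conjugate in $\widehat G$ to $(h')^n$. If $h'$ has $t$-length $k'\ge 1$, a short induction on $n$ using cyclic reducedness (no new pinch can form at the junction between consecutive copies of $h'$, since that is precisely what cyclic reducedness of $h'$ excludes) shows that $(h')^n$ is itself cyclically reduced with $t$-length $nk'\ge 1$. Since a cyclically reduced element realizes the minimum $t$-length in its conjugacy class, every conjugate of $(h')^n$ has $t$-length $\ge nk'\ge 1$; but $f\in G$ has $t$-length $0$, a contradiction. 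Hence $h'\in G$, so $f$ is conjugate in $\widehat G$ to $(h')^n\in G$. Part (1) upgrades this to conjugacy in $G$: there is $u\in G$ with $f = u^{-1}(h')^n u = (u^{-1}h'u)^n$, and primitivity of $f$ in $G$ forces $n=\pm 1$.

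\textbf{Main obstacle.} The chief delicacy is part (2): one must verify that cyclic reducedness is preserved under powers of $h'$ and that cyclically reduced words realize the minimum $t$-length in their conjugacy class in an HNN-extension. Both facts are standard but must be invoked carefully. Part (1) is essentially self-contained once the reduced-word structure of $w$ is set up correctly, and the key conceptual point is that the hypothesis $f\not\sim_G A\cup B$ is exactly the obstruction preventing the innermost syllable from being a pinch.
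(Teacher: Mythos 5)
Your proof is correct, and it takes a genuinely cleaner route than the paper for part~(2). For part~(1) your argument and the paper's are essentially the same Britton-Lemma analysis: the hypothesis that $f$ is not conjugate in $G$ to anything in $A\cup B$ kills the only candidate pinch in $w^{-1}fwg^{-1}$. For part~(2), the paper inducts on the $t$-length of a putative root $w$ of $f$: in the inductive step it uses Britton's Lemma to find a pinch at a junction of $w^n$, replaces $w$ by a shorter conjugate, cyclically reduces, and recurses, with a separate explicit computation in the $t$-length-$2$ base case. You instead cyclically reduce at the outset and then observe that (i) powers of a cyclically reduced word are cyclically reduced, and (ii) a cyclically reduced word of positive $t$-length cannot be conjugate to an element of $t$-length~$0$ (this is exactly the paper's own definition of ``cyclically reduced,'' so you are not even importing Collins' lemma beyond what the paper already grants). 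This collapses the induction into a single step and avoids the base-case bookkeeping. The paper's induction actually performs essentially the same cyclic-reduction move inside its inductive step, so your version exposes the same mechanism more directly; the one thing the paper's formulation emphasizes that yours handles via part~(1) is the conclusion ``$w^n$ is conjugate to an element of $A\cup B$,'' which you replace with the contradiction in $t$-lengths. Both are sound, and your reliance on part~(1) to upgrade $\widehat G$-conjugacy to $G$-conjugacy at the very end is correctly placed.
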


\begin{proof}
Since $f$ is not in $A$ or $B$, if $W$ is any reduced word then $W^{-1}fWg^{-1}$ contains no pinches. Thus, $f$ is not conjugate to $g$. The second assertion will immediately follow if we can show that if $w\in G\ast_{A^t=B}$ such that $w^n\in G$, then either $w\in G$ or $w^n$ is conjugate to an element of $A$ or an element of $B$ (here we identify $G$ with its image in $G\ast_{A^t=B}$). To show this we induct on the $t$-length of the reduced form of $w$. If a reduced word representing $w$ contains no $t$ letters, then $w\in G$. Clearly $w^n\in G$ implies that $w$ has even $t$-length, since the sum of the exponents of $t$ letters must be $0$. Suppose $w$ has $t$-length $2$. Then for some $g_0, g_1, g_2$, $\e\in\{0,1\}$, we have that $w=g_0t^{\e}g_1t^{-\e}g_2$. The Britton Lemma implies that $t^{-\e}g_2g_0t^{\e}$ must be a pinch or freely trivial, that is $t^{-\e}g_2g_0t^{\e}=h$ for some (possibly trivial) $h$ in $A$ or $B$. Without loss of generality, let $h\in A$, and note that this implies that $g_2g_0\in B$. Then $w^n=g_0t^{\e}(g_1hg_1)^nt^{-\e}g_2$. Again, by the Britton Lemma $t^{\e}(g_1hg_1)^nt^{-\e}$ must be a pinch or trivial, and since the orientation of the $t$ is reversed, we get that $t^{\e}(g_1hg_1)^nt^{-\e}=h'$ for some $h'\in B$. Finally, observe that $w^n=g_0h'g_2g_0g_0^{-1}$, thus $w^n\sim h'g_2g_0\in B$.

Now suppose we have shown the above claim for all elements of $G\ast_{A^t=B}$ with shorter $t$-length then $w$. As before, $w=g_0t^{\e_0}...t^{-\e_0}g_n$, and $t^{-\e_0}g_ng_0t^{\e_0}=h$, for some $h\in A\cup B$. Let $u'=g_1t^{\e_1}...g_{n-1}h$. Now let $u$ be a conjugate of $u'$ which is cyclicly reduced. Since $u\sim u'=t^{-\e_0}g_0^{-1}wg_0t^{\e_0}$, we have that $u\sim w$ and so $u^n\sim w^n\in G$. Since $u$ is cyclicly reduced, $u^n$ is cyclicly reduced, hence $u^n\in G$. Since $u$ has fewer $t$ letters then $w$, by the inductive hypothesis, $u^n$ (and thus $w^n$) is conjugate to an element of $A$ or an element of $B$.
\end{proof}

The following result was first proved by Dahmani
in \cite{Dah} for finitely generated groups and then in \cite{Osi06c} in the full generality. It is worth noting that we will use it for infinitely generated groups in this paper.

\begin{lem}\label{HNN}
Suppose that a group $G$ is hyperbolic relative to a collection of
subgroups $\Hl \cup \{ K\} $ and for some $\nu \in \Lambda $,
there exists a monomorphism $\iota \colon K\to H_{\nu }$. Then the
HNN-extension
\begin{equation}\label{HNN-pres}
\langle G,t\; |\; t^{-1}kt=\iota (k),\; k\in K\rangle
\end{equation}
is hyperbolic relative to $\Hl $.
\end{lem}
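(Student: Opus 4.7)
The plan is to verify directly that $\widetilde{G}$ satisfies a linear relative isoperimetric inequality with respect to $\Hl$. First, construct a finite relative presentation of $\widetilde{G}$ over $\Hl$ from the given one of $G$ over $\Hl\cup\{K\}$. Fix a finite relative generating set $X$ of $G$ and a finite set of relators $\mathcal{Q}$ such that $G=\langle X,\Hl,K\mid \mathcal{Q}\rangle$ satisfies a linear relative isoperimetric inequality with constant $L$. Since $k=t\iota(k)t^{-1}$ in $\widetilde{G}$, the set $X':=X\cup\{t\}$ relatively generates $\widetilde{G}$ over $\Hl$. The substitution $k\mapsto t\iota(k)t^{-1}$, combined with the identity on $X\cup\mathcal{H}\cup\{t\}$, extends to a homomorphism $\phi$ from the free product $\widetilde{F}=(F(X)\ast(\ast_\lambda H_\lambda)\ast K)\ast F(t)$ underlying the HNN-extension presentation of $\widetilde{G}$ to the free product $F':=F(X')\ast(\ast_\lambda H_\lambda)$. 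Under $\phi$ each HNN-relator $t^{-1}kt\iota(k)^{-1}$ freely reduces to the empty word, while each $Q\in\mathcal{Q}$ becomes a word $Q'\in\mathcal{Q}':=\phi(\mathcal{Q})$ in $X'\cup\mathcal{H}$. A direct computation using $\ker\phi=\ll\mathcal{R}_{\mathrm{HNN}}\rr^{\widetilde{F}}$ shows that $\phi$ descends to an isomorphism between $\widetilde{G}=\widetilde{F}/\ll\mathcal{Q}\cup\mathcal{R}_{\mathrm{HNN}}\rr^{\widetilde{F}}$ and $F'/\ll\mathcal{Q}'\rr^{F'}$; hence $\widetilde{G}=\langle X',\Hl\mid\mathcal{Q}'\rangle$ is a valid finite relative presentation.

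Given a word $W$ in $X'\cup\mathcal{H}$ representing $1$ in $\widetilde{G}$, I would bound $Area^{rel}_{\widetilde{G}}(W)$ linearly in $\|W\|$. The scheme is first to express $W$ as a product of conjugates of relators from $\mathcal{Q}\cup\mathcal{R}_{\mathrm{HNN}}$ in $\widetilde{F}$ and then to push this expression forward via $\phi$; since $\phi$ collapses $\mathcal{R}_{\mathrm{HNN}}$-terms to trivial words and sends $\mathcal{Q}$-terms to $\mathcal{Q}'$-terms, the resulting $\mathcal{Q}'$-cell count equals the $\mathcal{Q}$-cell count in the original expression. To obtain such an expression with few $\mathcal{Q}$-cells, iteratively apply the Britton Lemma (Lemma \ref{BL}) to reduce $W$: each pinch reduction $t^{-\e}Ut^\e\rightsquigarrow v$ (with $U=_G k\in K$ or $U=_G \iota(k)$, and $v$ the corresponding element of the other factor) uses one $\mathcal{R}_{\mathrm{HNN}}$-cell plus a relative $G$-diagram of area at most $L(\|U\|+1)$ $\mathcal{Q}$-cells to certify $U=_G k$ in $G$. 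Reducing an \emph{innermost} pinch at each step ensures the middle subwords $U$ of distinct reductions are pairwise disjoint in $W$, so the reduction phase contributes $O(\|W\|)$ $\mathcal{Q}$-cells in total. After all $t$'s are eliminated, apply $G$'s isoperimetric inequality to the resulting word $W_0\in(X\cup\mathcal{H})^*$ (which satisfies $W_0=_G 1$) to pay at most $L\|W_0\|\le L\|W\|$ further $\mathcal{Q}$-cells, completing the linear bound.

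The main obstacle is the combinatorial accounting of the Britton-reduction step, complicated by the asymmetry between the two types of pinches in the alphabet $X'\cup\mathcal{H}$: while a ``type-1'' pinch $t^{-1}Ut$ with $U$ representing an element of $K$ reduces to a single $\mathcal{H}$-letter, a ``type-2'' pinch $tUt^{-1}$ with $U$ representing an element of $\iota(K)$ must be handled indirectly since $K$ is not in the alphabet. Performing the reductions in $\widetilde{F}$ (where both types can be treated uniformly) and then pushing forward via $\phi$ circumvents this issue. An additional subtlety is that $K$ and the $H_\lambda$ may be infinitely generated---the lemma is applied in this generality in the paper---so the linear bound must come exclusively from finiteness of $X$ and $\mathcal{Q}$, not from sizes of the parabolic subgroups.
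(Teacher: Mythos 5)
The paper does not supply a proof of Lemma~\ref{HNN} at all; it simply cites Dahmani \cite{Dah} (finitely generated case) and \cite{Osi06c} (general case), so there is no internal proof to compare against. Your proposal gives a direct, self-contained argument whose overall structure---pass to a relative presentation of $\widetilde{G}$ over $\Hl$ by substituting $k\mapsto t\iota(k)t^{-1}$, then estimate the relative area via Britton reduction and push the resulting expression forward through the substitution homomorphism $\phi$---is sound, and is close in spirit to the combinatorial van Kampen diagram/$t$-band analysis used in \cite{Osi06c}. Both the identification $\widetilde{G}\cong F'/\ll\mathcal{Q}'\rr^{F'}$ (using that $\widetilde{F}/\ll\mathcal{R}_{\mathrm{HNN}}\rr\cong F'$, an HNN-extension of $A\ast K$ over a free factor being again a free product) and the device of performing the reduction in $\widetilde{F}$ so that the two types of pinch can be treated symmetrically are correct and resolve the genuine awkwardness you identify.

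Two small inaccuracies, neither of which breaks the argument. First, the justification ``the middle subwords $U$ of distinct reductions are pairwise disjoint in $W$'' is not literally true: the replacement letter produced by one pinch reduction can lie inside the middle subword of a later reduction, so the $U_i$ need not be disjoint subwords of the \emph{original} $W$, nor even subwords of $W$ at all. The cleaner accounting is to observe that each pinch reduction shortens the current word by exactly $\|U_i\|+1$ (or $\|U_i\|+2$ when the pinch evaluates to $1$), so $\sum_i(\|U_i\|+1)\le \|W\|-\|W_0\|$ and hence the pinch phase costs at most $L(\|W\|-\|W_0\|)$ $\mathcal{Q}$-cells and at most $\|W\|/2$ $\mathcal{R}_{\mathrm{HNN}}$-cells; adding $L\|W_0\|$ for the final $G$-diagram gives the bound $L\|W\|$. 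Second, the residual word $W_0$ need not lie in $(X\cup\mathcal{H})^*$: type-$2$ pinches introduce $K$-letters, so $W_0\in(X\cup\mathcal{H}\cup (K\setminus\{1\}))^*$. This is harmless, since $K$ is a parabolic of $G$ and $G$'s relative isoperimetric inequality applies over precisely this alphabet; but the statement as written is off. With these two repairs the argument is a complete and correct proof of the lemma.
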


\begin{cor}\label{sgHNN}
Let $G$ be a torsion free group hyperbolic relative to $\Hl$, $S$ a suitable subgroup of $G$, and $g$ a loxodromic element of $G$. Then for any $h\in\mathcal H$, there is an isomorphism $\iota\colon E_G(g)\to\langle h\rangle$ and the corresponding HNN-extension $G\ast_{E_G(g)^t=\langle h\rangle}$ is hyperbolic relative to $\Hl$. Furthermore, $S$ is a suitable subgroup of $G\ast_{E_G(g)^t=\langle h\rangle}$.
\end{cor}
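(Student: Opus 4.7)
The plan is to treat the three assertions in order. The isomorphism $\iota$ exists because both $E_G(g)$ and $\langle h\rangle$ are infinite cyclic: $\langle h\rangle\cong\mathbb Z$ since $h$ has infinite order in torsion-free $G$, while $E_G(g)$ is elementary by Lemma~\ref{Eg}(a) and torsion-free as a subgroup of $G$, and any torsion-free virtually cyclic group is infinite cyclic. For the hyperbolicity of $\widetilde G := G\ast_{E_G(g)^t=\langle h\rangle}$, I would apply Lemma~\ref{Eg}(c) to see that $G$ is hyperbolic relative to $\Hl\cup\{E_G(g)\}$, pick $\lambda\in\Lambda$ with $h\in H_\lambda$, and feed the composition of $\iota$ with the inclusion $\langle h\rangle\hookrightarrow H_\lambda$ into Lemma~\ref{HNN} to obtain that $\widetilde G$ is hyperbolic relative to $\Hl$.

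The substantive work is showing that $S$ remains suitable in $\widetilde G$. Using Lemma~\ref{non-com}(1), I extract an infinite sequence $s_1,s_2,\ldots\in S$ of pairwise non-commensurable loxodromic elements with $E_G(s_i)=\langle s_i\rangle$. Since commensurability is transitive among loxodromic elements, at most one $s_i$ can be commensurable with $g$; I drop it and relabel so that no $s_i$ is commensurable with $g$. The key verification will be: for every $i$ and every $k\neq 0$, the element $s_i^k$ is not conjugate in $G$ to any element of $E_G(g)\cup\langle h\rangle$. Non-commensurability of $s_i$ with $g$ rules out conjugacy to any power of $g$, while $s_i^k$ is loxodromic in $G$ (powers of loxodromic elements being loxodromic, which follows from Lemma~\ref{Eg}(b) together with the almost malnormality of parabolic subgroups in Lemma~\ref{malnorm}), so it is not conjugate to any parabolic element, in particular to no element of $\langle h\rangle\subseteq H_\lambda$. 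This establishes the hypothesis of Lemma~\ref{conjHNN} for each $s_i^k$.

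The final step is to harvest Lemma~\ref{conjHNN}. Part~(1) applied to $s_i$ implies $s_i$ is not conjugate in $\widetilde G$ to any element of any $H_\mu$, so $s_i$ remains loxodromic in $\widetilde G$. Applied to $s_1^k$, it shows that $s_1^k\sim_{\widetilde G} s_2^l$ would force $s_1^k\sim_G s_2^l$, contradicting non-commensurability of $s_1, s_2$ in $G$; hence these elements stay non-commensurable in $\widetilde G$. Part~(2) shows that each $s_i$ (which is primitive in $G$ since $E_G(s_i)=\langle s_i\rangle$) remains primitive in $\widetilde G$; as $\widetilde G$ is torsion-free (HNN-extensions of torsion-free groups are torsion-free by Britton's Lemma), Lemma~\ref{Eg}(a) gives that $E_{\widetilde G}(s_i)$ is infinite cyclic, and since it contains the primitive element $s_i$ it must equal $\langle s_i\rangle$. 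Non-commensurability in $\widetilde G$ then yields $E_{\widetilde G}(s_1)\cap E_{\widetilde G}(s_2)=\{1\}$, completing the proof of suitability. The principal obstacle is the bookkeeping of the middle paragraph that verifies the hypothesis of Lemma~\ref{conjHNN}; once that is in hand, the remainder is a mechanical assembly of the cited results.
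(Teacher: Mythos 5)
Your proof is correct, and the first two-thirds (existence of $\iota$ via torsion-freeness, relative hyperbolicity via Lemma~\ref{Eg}(c) and Lemma~\ref{HNN}, and the extraction of infinitely many pairwise non-commensurable loxodromics from Lemma~\ref{non-com}(1) with at most one being commensurable with $g$) match the paper's argument. Where you diverge is the final step: after establishing that $S$ still contains a loxodromic element of $\widetilde G$, you verify the definition of suitability directly -- showing $s_1, s_2$ stay non-commensurable, stay primitive, and have $E_{\widetilde G}(s_i)=\langle s_i\rangle$ so that $E_{\widetilde G}(s_1)\cap E_{\widetilde G}(s_2)=1$. The paper instead invokes Lemma~\ref{non-com}(2): since $\widetilde G$ is torsion-free (Britton), $S$ is non-elementary (it was suitable in $G$), and $S$ contains a loxodromic element of $\widetilde G$, suitability follows in one line. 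Your route is self-contained and spells out exactly what mechanism makes non-com(2) true in this instance, at the cost of the extra bookkeeping about commensurability and primitivity being preserved under the HNN-extension. The paper's route is shorter but opaque about why the hypotheses of non-com(2) hold in $\widetilde G$ -- a point you do check carefully. Both are valid; yours trades concision for explicitness.
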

\begin{proof}
The existence of $\iota$ follows from the fact that since $G$ is torsion free, $E_G(g)$ and $\langle h\rangle$ are both infinite cyclic. The relative hyperbolicity of $G\ast_{E_G(g)^t=\langle h\rangle}$ follows immediately from Lemma \ref{Eg} and Lemma \ref{HNN}. Since $S$ is suitable in $G$, Lemma \ref{non-com} yields the existence of infinitely many pairwise non--commensurable loxodromic (in $G$) elements of $S$.  At most one of these elements is commensurable with $g$ in $G$. Therefore, by Lemma \ref{conjHNN} $S$, considered as a subgroup of $G\ast_{E_G(g)^t=\langle h\rangle}$, will still contain loxodromic elements. Thus $S$ is suitable in $G\ast_{E_G(g)^t=\langle h\rangle}$ by Lemma \ref{non-com}.
\end{proof}


\section{Conjugacy growth in groups with hyperbolically embedded subgroups}


Let $G$ be a group, $H\le G$, $X\subseteq G$. We assume that $G=\langle X\cup H\rangle $ and denote by $\Gamma(G, X\cup H)$ the Cayley graph of $G$ with respect to the generating set $X\cup H$ and by $\Gamma _H$ the Cayley graph of $H$ with respect to the generating set $H$. Clearly $\Gamma _H$ is a complete subgraph of $\Gamma (G, X\cup H)$.

Given two elements $h_1,h_2\in H$, we define $\widehat d(h_1,h_2)$ to be the length of a shortest path $p$ in $\Gamma (G, X\cup H) $ that connects $h_1$ to $h_2$ and does not contain edges of $\Gamma _H$. If no such path exists we set $\widehat d(h_1, h_2)=\infty $. Clearly $\widehat d\colon H\times H\to [0, \infty]$ is a metric on $H$.

\begin{defn} We say that $H$ is {\it hyperbolically embedded in $G$ with respect to} $X\subseteq G$ (and write $H\hookrightarrow _h (G,X) $) if the following conditions hold:
\begin{enumerate}
\item[(a)] $G=\langle X\cup H\rangle$ and $\Gamma (G, X\cup H)$ is hyperbolic.
\item[(b)] $(H,\widehat d)$ is a locally finite metric space, i.e., every ball (of finite radius) is finite.
\end{enumerate}
We also say that $H$ is \textit{hyperbolically embedded in $G$} (and write $H\hookrightarrow _h G$) if $H\hookrightarrow _h (G,X) $ for some $X\subseteq G$.
\end{defn}

Note that for any group $G$, we have $G\h (G,X)$ for $X=\emptyset$. Indeed then the Cayley graph $\Gamma(G, X\cup H)$ has diameter $1$ and $d(h_1, h_2)=\infty $ whenever $h_1\ne h_2$. Further, if $H$ is a finite subgroup of a group $G$, then $H\h (G,X)$ for $X=G$. These cases are referred to as {\it degenerate}.

\begin{defn}
A hyperbolically embedded subgroup $H\h G$  is called \emph{non-degenerate} if it is infinite and proper (i.e., $H\ne G$).
\end{defn}

One may wonder if the case when $H$ is of finite index in $G$ should also be considered degenerate. In fact, a proper finite index subgroup of an infinite group is never hyperbolically embedded (see Lemma \ref{malnHE}).

Let us consider two elementary examples to illustrate the definition.

\begin{ex}
\begin{enumerate}
\item Let $G=H\times \mathbb Z$, $X=\{ x\} $, where $x$ is a generator of $\mathbb Z$. Then $\Gamma (G, X\cup H)$ is quasi-isometric to a line and hence it is hyperbolic. However $d(h_1, h_2)\le 3$ for every $h_1, h_2\in H$. If $H$ is infinite, then $H\not\h (G,X)$. Moreover, generalizing this argument, one can show that $H\not\h G$.

\item Let $G=H\ast \mathbb Z$, $X=\{ x\} $, where $x$ is a generator of $\mathbb Z$. In this case $\Gamma (G, X\cup H)$ is quasi-isometric to a tree and $d(h_1, h_2)=\infty $ unless $h_1=h_2$. Thus $H\h (G,X)$.
\end{enumerate}
\end{ex}

A group $G$ is hyperbolic relative to a subgroup $H$ if and only if $H\hookrightarrow _h (G,X) $ for some finite set $X$ \cite{DGO}. This provides us with a rich source of examples. For instance, the following groups contain non-degenerate hyperbolically embedded subgroups: non-elementary hyperbolic groups, fundamental groups of complete finite-volume manifolds of pinched negative sectional curvature, free products of groups other than $\mathbb Z_2\ast \mathbb Z_2$ and their small cancellation quotients as defined in \cite{LS}, groups with infinitely many ends, non-abelian finitely generated groups acting freely on $\mathbb R^n$-trees, including non-abelian limit groups, etc.

On the other hand, there are many examples of non-relatively hyperbolic groups that contain non-degenerate hyperbolically embedded subgroups. Examples include all but finitely many mapping class groups,   $Out(F_n)$ for $n\ge 2$, the Cremona group ${\rm Bir} ({\mathbb P^2_{\mathbb C}})$ (i.e., the group of birational automorphism of the complex projective plane), directly indecomposable right angled Artin groups, and many groups acting on trees. For details we refer to \cite{DGO}.

The following two results can be found in \cite{DGO}.

\begin{lem}\label{malnHE}
Let $G$ be a group, $H$ a hyperbolically embedded subgroup. Then for every $g\in G\setminus H$, the intersection $H\cap H^g$ is finite.
\end{lem}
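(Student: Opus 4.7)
The plan is to bound $\widehat d(1, ghg^{-1})$ uniformly for $h \in H \cap H^g$ by a constant depending only on $g$, and then invoke the local finiteness of $(H, \widehat d)$ to conclude that $|H\cap H^g| < \infty$.

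First I would normalize the choice of $g$. For any $h_1, h_2 \in H$, a direct computation gives $H \cap H^{h_1 g h_2} = h_2^{-1}(H \cap H^g) h_2$, so $|H \cap H^g|$ depends only on the double coset $HgH$. I may therefore assume $g$ has minimal $(X\cup H)$-length inside $HgH$; note $g \notin H$, so this minimum is at least $1$. If a geodesic word $y_1\cdots y_n$ representing $g$ began with $y_1 \in H$, then $y_1^{-1}g \in HgH$ would be strictly shorter, contradicting minimality. Hence $y_1 \notin H$, and symmetrically $y_n \notin H$.

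Next, for $h \in H \cap H^g$, set $h' := ghg^{-1} \in H$. The identity $gh = h'g$ produces a path
\[
1\ \xrightarrow{\,y_1\cdots y_n\,}\ g\ \xrightarrow{\,h\,}\ gh = h'g\ \xrightarrow{\,y_n^{-1}\cdots y_1^{-1}\,}\ h'
\]
in $\Gamma(G, X\cup H)$ of length $2n+1$. I claim this path traverses no $\Gamma_H$-edge. An internal vertex $v_i = y_1\cdots y_i$ with $1 \le i < n$ on the first segment cannot lie in $H$: geodesicity of the sub-segment from $1$ to $v_i$ forces $d(1,v_i) = i$, while the direct edge labeled $v_i$ in $\Gamma(G,X\cup H)$ forces $d(1,v_i) \le 1$, yielding $i=1$ and $y_1 \in H$, contrary to the normalization. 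A symmetric argument handles the last segment, and the middle edge $g\to gh$ has both endpoints outside $H$ (since $g\notin H$, hence $gh\notin H$), so it is not in $\Gamma_H$. Consequently $\widehat d(1, h') \le 2n+1$.

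Since $(H,\widehat d)$ is locally finite, only finitely many $h' \in H$ satisfy $\widehat d(1, h') \le 2n+1$, and since $h\mapsto ghg^{-1}$ is injective on $H\cap H^g$, we conclude that $|H\cap H^g| < \infty$. The only delicate step is the normalization: once $y_1, y_n \notin H$ is arranged, the key observation that no internal vertex of a geodesic can belong to $H$ is essentially automatic, because in $\Gamma(G, X\cup H)$ every nontrivial element of $H$ is at graph-distance exactly $1$ from $1$, leaving geodesics no room to visit $H$ in between.
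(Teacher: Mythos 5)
The paper does not prove this lemma but refers to \cite{DGO}, so there is no in-paper argument to compare against. Your proof is correct. The double-coset normalization is the decisive step: forcing $y_1\notin H$ ensures (since every nontrivial element of $H$ is at $\Gamma(G,X\cup H)$-distance exactly one from the identity) that no interior vertex of the geodesic for $g$ lies in $H$; this propagates to the $h'$-translated reversal, and the middle edge $g\to gh$ has both endpoints outside $H$. The concatenation $1\to g\to gh=h'g\to h'$ therefore avoids $\Gamma_H$-edges entirely, giving the uniform bound $\widehat d(1, ghg^{-1})\le 2|g|_{X\cup H}+1$, after which local finiteness of $(H,\widehat d)$ closes the argument. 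One observation worth making explicit: the hyperbolicity of $\Gamma(G,X\cup H)$ plays no role here --- only condition (b) of the definition of a hyperbolically embedded subgroup is used.
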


\begin{lem}\label{Kn}
Let $G$ be a group with a non-degenerate hyperbolically embedded subgroup. Then for every $n\in \mathbb N$, there exists a hyperbolically embedded subgroup $K_n\le G$ such that $K_n\cong Z\times F_n$, where $Z$ is finite and $F_n$ is the free group of rank $n$.
\end{lem}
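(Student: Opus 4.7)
The plan is to build $K_n$ as $Z\times F_n$, where $Z$ is a maximal finite normal subgroup of $G$ (playing the role of the "$Z$" factor) and $F_n$ is a free group of rank $n$ generated by high powers of well-chosen loxodromic elements that all centralize $Z$. First, I would extract $Z$. Non-degeneracy of $H$ together with hyperbolicity of $\Gamma(G, X\cup H)$ guarantees the existence of elements of $G$ that act loxodromically on $\Gamma(G, X\cup H)$. By the analogue of Lemma \ref{Eg}(a), each such loxodromic $g$ is contained in a unique maximal virtually cyclic subgroup $E_G(g)$; its orientation-preserving index-$\le 2$ subgroup $E_G^+(g)$ has a unique maximal finite subgroup $K_g$. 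Using Lemma \ref{malnHE} (finite "malnormality" of $H$) one then shows that the intersection of $K_g$ over sufficiently generic loxodromic $g$ stabilizes at a finite subgroup which is normal in $G$; take this to be $Z$.

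Next, I would construct the free factor. By the analogue of Lemma \ref{non-com}(1) in the hyperbolically embedded setting, there exist infinitely many loxodromic elements $s_1,s_2,\ldots$ in $G$ that are pairwise non-commensurable and satisfy $E_G(s_i)\cap E_G(s_j)=Z$ for $i\ne j$. Since $Z$ is finite and normal, $C_G(Z)$ has finite index in $G$, so replacing each $s_i$ by a fixed power I may assume every $s_i$ commutes element-wise with $Z$. A ping-pong argument using the north-south dynamics of the $s_i$ on the Gromov boundary of $\Gamma(G, X\cup H)$ then shows that for sufficiently large further powers $N_i$, the subgroup $F_n:=\langle s_1^{N_1},\dots, s_n^{N_n}\rangle$ is free of rank $n$ with $F_n\cap Z=\{1\}$. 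Hence $K_n:=Z\cdot F_n=Z\times F_n$ as abstract groups.

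The main obstacle is the last step: verifying that this abstract product is actually hyperbolically embedded, i.e.\ exhibiting a set $X'\supseteq X$ such that $\Gamma(G, X'\cup K_n)$ is hyperbolic and the induced metric $\widehat d$ on $K_n$ is locally finite. The natural strategy is to take $X'=X$ and argue, via a small cancellation type condition on the axes of the $s_i^{N_i}$ in $\Gamma(G, X\cup H)$, that for $N_i$ large enough any two distinct axes fellow-travel only on bounded pieces, and no axis returns near itself except trivially. Combined with the hyperbolicity of $\Gamma(G, X\cup H)$ and Lemma \ref{qgq} applied to quadrangles built from axis segments and short $X$-paths, this gives both hyperbolicity of $\Gamma(G, X\cup K_n)$ and a uniform lower bound on the $\widehat d$-distance between distinct elements of $F_n$; adjoining the finite normal $Z$ only alters distances by a bounded additive constant and hence preserves local finiteness. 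The delicate point is precisely that the ping-pong/small-cancellation estimates must be strong enough to control all paths in $\Gamma(G,X\cup K_n)$ that leave $K_n$, which is where the full strength of the hyperbolic embedding $H\hookrightarrow_h(G,X)$, and not merely the existence of loxodromic elements, is used.
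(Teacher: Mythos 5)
The paper gives no proof of Lemma \ref{Kn}; it is cited from \cite{DGO}, and your sketch does follow the broad DGO strategy (extract $Z$ from the elementary closures of loxodromics, choose independent loxodromics centralizing $Z$, ping-pong to a free group, verify hyperbolic embedding). But there is a concrete error in your final step. The choice $X'=X$ is generally impossible: the definition of a hyperbolically embedded subgroup requires $G=\langle X'\cup K_n\rangle$, and already in the paper's Example at the start of Section 3 ($G=H\ast\Z$, $X=\{x\}$) the set $X\cup K_n$ fails to generate $G$ for the kind of $K_n$ you build from powers of loxodromics. One must take $X'=X\cup H$; equivalently, show that the \emph{pair} $\{H,K_n\}$ is hyperbolically embedded with respect to $X$ and then absorb $H$ into the relative generating set. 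This is not cosmetic. Your small-cancellation and fellow-traveling estimates all live in $\Gamma(G,X\cup H)$ and bear no direct relation to the graph $\Gamma(G,X\cup K_n)$, which omits the $\mathcal H$-edges and is coarsely unrelated; working in $\Gamma(G,X\cup H\cup K_n)$ is what makes your estimates speak to the right object.

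There are smaller issues as well. Lemma \ref{malnHE} controls $H\cap H^g$ for the \emph{original} $H$ and does not directly say anything about the maximal finite subgroups $K_g\le E_G(g)$ of elementary closures; to make the intersection $\bigcap_g K_g$ stabilize one first needs that $E_G(g)$ is itself hyperbolically embedded (a substantive DGO theorem, analogous to Lemma \ref{Eg}(c)), and then that $E_G(g_1)\cap E_G(g_2)$ is finite for non-commensurable loxodromics $g_1,g_2$, which follows from commensurability rather than from malnormality of $H$. Finally, the verification that $\Gamma(G,X\cup H\cup K_n)$ is hyperbolic \emph{and} that $\widehat d$ restricted to $K_n$ is locally finite is exactly where DGO deploy their rotating-family/geometric-separation machinery; the axis fellow-traveling heuristic you describe explains why one expects the statement to hold, but as written it is a plan for a proof rather than a proof.
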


We are now ready to prove our first result.

\begin{thm}\label{main0}
Let $G$ be a finitely generated group with a non-degenerate hyperbolically embedded subgroup. Then $\xi _G\sim \pi _G\sim 2^n$.
\end{thm}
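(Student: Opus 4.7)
The upper bound is immediate: $\pi_G(n)\le \xi_G(n)\le |B_{G,X}(n)|\le (2|X|+1)^n\sim 2^n$, and the inequality $\pi_G\le \xi_G$ holds trivially. All the work therefore goes into establishing the matching lower bound $2^n\preceq \pi_G(n)$.

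My plan is to use the machinery of hyperbolically embedded subgroups to extract from the non-degenerate hyperbolically embedded subgroup a \emph{free} subgroup $F=\langle a,b\rangle\cong F_2$ which is itself hyperbolically embedded, $F\h G$. Such $a,b$ should be obtainable by choosing two loxodromic (with respect to the action on $\G$) elements of $G$ whose elementary closures intersect trivially, and then passing to sufficiently high powers; the construction of a hyperbolically embedded free subgroup from such data is a standard extension result in this theory. Set $C_0=\max\{|a|_X,|b|_X\}$.

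Granted $F\h G$, I would exploit the classical exponential conjugacy growth of $F_2$. Consider the $2^k$ reduced words $w_{\e_1,\ldots,\e_k}=a^{\e_1}ba^{\e_2}b\cdots a^{\e_k}b$ with $\e_i\in\{1,2\}$. Each has $X$-length at most $3C_0 k$, and a positive proportion of their $\sim 2^k/k$ cyclic equivalence classes in $F$ consists of primitive elements of $F$ that are pairwise non-conjugate in $F$. Assuming the transfer of primitivity and of $F$-non-conjugacy to the corresponding statements in $G$, this gives $\pi_G(3C_0 k)\succeq 2^k/k$, hence $\pi_G\succeq 2^n$, which is exactly what is needed.

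The technical heart of the argument, and the main obstacle, is precisely this transfer. The $F\h G$ hypothesis provides strong control over conjugating elements through the geometry of $\G$: a $G$-geodesic between two loxodromic elements of $F$ of sufficient length should essentially be forced to track a word in $F$, and any root of such an element should lie in a finite extension of $F$ whose contribution to the count is negligible. Making these ``essentially'' and ``negligible'' clauses precise --- likely through bigon-thinness and small cancellation arguments standard in the theory, together with careful bookkeeping of the elementary closures $E(a), E(b)$ of the loxodromic elements --- is where the bulk of the technical work will reside.
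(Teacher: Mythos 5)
You correctly identify the high-level strategy — produce a hyperbolically embedded free (or virtually free) subgroup $K$, count primitive conjugacy classes there, and transfer the count to $G$ — and you correctly flag the transfer step as the crux. But you leave that step as an acknowledged gap, and the geometric machinery you propose (bigon-thinness, small cancellation, quasi-geodesic tracking) is not where the resolution lies. The transfer is filled in the paper by a single short algebraic fact, the \emph{almost malnormality} of hyperbolically embedded subgroups (Lemma \ref{malnHE}): for $g\in G\setminus H$, the intersection $H\cap H^g$ is finite. Given the hyperbolically embedded $K\cong Z\times F_2$ from Lemma \ref{Kn}, this immediately yields both transfers you need. If $f\in K$ has infinite order and $g^m=f$ for some $g\in G$, $m\ne 0$, then $g$ commutes with $f$, so $\langle f\rangle\le K\cap K^g$ is infinite and hence $g\in K$; thus infinite-order elements primitive in $K$ stay primitive in $G$. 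If $f_1,f_2\in K$ have infinite order and $f_1^g=f_2$, then $f_2\in K\cap K^g$, again forcing $g\in K$; so $K$-non-conjugate elements stay $G$-non-conjugate. There is no need to analyze geodesics in $\G$ at all.

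Two smaller points. First, you ask for a hyperbolically embedded $F_2$ on the nose; in the presence of torsion this need not exist, and the correct statement (Lemma \ref{Kn}, from \cite{DGO}) produces $K\cong Z\times F_n$ with $Z$ finite. That is good enough: the finite direct factor does not affect the exponential count of infinite-order primitive classes, and the almost-malnormality argument applies to $K$ rather than to a free subgroup inside it. Second, once the transfer is in place you can simply cite that $\pi_K$ is exponential (the paper invokes \cite{CK1}, or just classical counting in $F_2$); the explicit word family $a^{\e_1}b\cdots a^{\e_k}b$ is a fine way to see it but is not the bottleneck. So the proposal is pointed in the right direction, but the one step you defer — and which you gesture at with the wrong toolkit — is exactly the content of the proof.
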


\begin{proof}
Let $K=K_2\le G$ be the subgroup provided by Lemma \ref{Kn}. We think of $F_2$ as a subgroup of $K$. Note first that if $g^m=f$ for some $g\in G$, $f\in K$, and $m\in \mathbb Z\setminus \{ 0\}$, then the intersection $K^g\cap K$ contains the subgroup $\langle f\rangle$. Hence if $f$ has infinite order, $g\in K$ by Lemma \ref{malnHE}. Thus every element of $K$ of infinite order that is primitive in $K$ is also primitive in $G$. Furthermore, if two elements of $K$ of infinite order are conjugate in $G$, then they are conjugate in $K$ for the same reason. Thus we obtain $\pi _G\succeq \pi _K$ and the later function is obviously exponential (this also follows from the results of \cite{CK1} as $K$ is non-elementary hyperbolic). Since $\pi _G\preceq \xi_G \preceq 2^n$ for every finitely generated group $G$, we are done.
\end{proof}

Theorem \ref{main0} can be used to completely classify conjugacy growth functions of subgroups of certain groups, e.g., mapping class groups.

\begin{cor}
Let $\Sigma $ be a (possibly punctured) closed orientable surface, $G$ a subgroup of the mapping class group of $\Sigma $. Then either $G$ is virtually abelian (in which case $\xi_G$ is polynomial), or $\xi_G$ is exponential.
\end{cor}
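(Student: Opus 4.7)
The plan is to split on whether $G$ is virtually abelian, and since the conjugacy growth function is only defined for finitely generated groups, I tacitly assume $G$ is finitely generated throughout.

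Suppose first that $G$ is virtually abelian. I would begin by replacing the given finite-index virtually abelian subgroup by the intersection of its $G$-conjugates, obtaining a finite-index \emph{normal} abelian subgroup $A\le G$, necessarily of the form $\Z^d\times F$ with $F$ finite. The polynomial growth of virtually abelian groups gives $\gamma_G\sim n^d$, so the trivial inequality $\xi_G\le\gamma_G$ yields $\xi_G\preceq n^d$. For the matching lower bound, since $A$ is normal and abelian the $G$-conjugation action on $A$ factors through the finite quotient $G/A$, so two elements of $A$ are $G$-conjugate if and only if they lie in the same $G/A$-orbit. The number of such orbits meeting a ball of radius $n$ in $A$ is at least $|B_A(n)|/|G/A|\sim n^d$, producing the required number of conjugacy classes of $G$ that meet a ball of linear radius and yielding $\xi_G\sim n^d$.

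If $G$ is not virtually abelian, I would invoke the structural result of \cite{DGO}: every finitely generated subgroup of the mapping class group of a (possibly punctured) closed orientable surface that is not virtually abelian contains a non-degenerate hyperbolically embedded subgroup. Theorem \ref{main0} then immediately gives $\xi_G\sim 2^n$. The main obstacle is external to the present paper and lies in invoking this classification; it is proved by induction on the complexity of $\Sigma$, handling the irreducible case via the WPD action on the curve graph (which provides two independent pseudo-Anosov elements generating a hyperbolically embedded free subgroup) and reducing the reducible case through stabilizers of multicurves, which are virtual extensions of direct products of mapping class groups of proper subsurfaces by free abelian Dehn-twist subgroups. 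Once this classification is accepted, the remainder of the argument is formal.
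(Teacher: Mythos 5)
Your handling of the virtually abelian case is fine (and fills in a detail the paper leaves implicit), but the non-virtually-abelian case contains a genuine gap: you misstate the structural dichotomy from \cite{DGO}. You claim that every finitely generated non-virtually-abelian subgroup of the mapping class group \emph{contains} a non-degenerate hyperbolically embedded subgroup, but this is false. Your own sketch of the reducible case shows why: a reducible subgroup can be (up to finite index) a direct product of two infinite non-virtually-abelian mapping class subgroups of disjoint subsurfaces, and a direct product of two infinite groups never contains a non-degenerate hyperbolically embedded subgroup (since such a subgroup would give acylindrical hyperbolicity, which is incompatible with an infinite direct product decomposition). Such a $G$ is not virtually abelian, yet your cited classification fails for it, so Theorem~\ref{main0} cannot be applied to $G$ directly.

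The theorem the paper actually invokes (Theorem~2.21 of \cite{DGO}) is weaker and correct: $G$ is either virtually abelian, or has a finite-index subgroup $G_0$ which \emph{surjects onto} a group $Q$ having a non-degenerate hyperbolically embedded subgroup. This two-step reduction (pass to a finite-index subgroup, then to a quotient) is exactly what handles the direct-product case: you project to one of the factors. The two extra steps it forces are both necessary and both missing from your write-up: first, $\xi_Q\preceq\xi_{G_0}$ because non-conjugate elements of a quotient pull back to non-conjugate elements of no greater length, so Theorem~\ref{main0} applied to $Q$ gives $\xi_{G_0}$ exponential; second, $\xi_{G_0}\preceq\xi_G$ for a finite-index subgroup, as recorded in the paper (referencing \cite{H}). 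Without the ``surjects onto'' weakening and these two comparison lemmas, your argument does not cover all non-virtually-abelian $G$.
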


\begin{proof}
By Theorem 2.21 from \cite{DGO}, $G$ is either virtually abelian, or has a finite index subgroup $G_0$ which surjects on a group with a non-degenerate hyperbolically embedded subgroup. In the later case $\xi_{G_0}$ is exponential  by Theorem \ref{main0}. It is straightforward to prove that one has $\xi _{G_0}\preceq \xi_G$ whenever $[G:G_0]<\infty $ (see, e.g., \cite{H}). Hence the claim.
\end{proof}


\section{Small cancellation conditions}


Given a set of words $\mathcal R$ in an alphabet $\mathcal A$, we
say that $\mathcal R$ is {\it symmetrized} if for any $R\in
\mathcal R$, $\mathcal R$ contains all cyclic shifts of $R^{\pm
1}$. Further, if $G$ is a group generated by a set $\mathcal A$,
we say that a word $R$ is {\it $(\lambda , c)$--quasi--geodesic} in
$G$ if any path in the Cayley graph $\Gamma (G, \mathcal A)$
labeled by $R$ is $(\lambda , c)$--quasi--geodesic.

We begin by giving the small cancellation conditions introduced by Olshanskii in \cite{Ols} and also used in \cite{Osi10}.

\begin{defn}\label{piece}
Let $G$ be a group generated by a set $\mathcal A$, $\mathcal R$ a
symmetrized set of words in $\mathcal A$. For $\e
>0$, a subword $U$ of a word $R\in \mathcal R$ is called an {\it
$\e $--piece}  if there exists a word $R^\prime \in \mathcal R$
such that:

\begin{enumerate}
\item[(1)] $R\equiv UV$, $R^\prime \equiv U^\prime V^\prime $, for
some $V, U^\prime , V^\prime $; \item[(2)] $U^\prime = YUZ$ in $G$
for some words $Y,Z$ in $\mathcal A$ such that $\max \{ \| Y\| ,
\,\| Z\| \} \le \e $; \item[(3)] $YRY^{-1}\ne R^\prime $ in the
group $G$.
\end{enumerate}
Similarly, a subword $U$ of $R\in \mathcal R$ is called an {\it
$\e ^\prime $--piece}  if:
\begin{enumerate}
\item[($1^\prime $)] $R\equiv UVU^\prime V^\prime $ for some $V,
U^\prime , V^\prime $; \item[($2^\prime $)] $U^\prime =YU^{\pm
1}Z$ in the group $G$ for some $Y,Z$ satisfying $\max\{ \| Y\| ,
\| Z\| \}\le \e $.
\end{enumerate}
\end{defn}

\begin{defn}\label{DefSC}
We say that the set $\mathcal R$ satisfies the {\it $C(\e , \mu ,
\lambda , c, \rho )$--condition} for some $\e \ge 0$, $\mu >0$,
$\lambda >0$, $c\ge 0$, $\rho >0$, if
\begin{enumerate}
\item[(1)] $\| R\| \ge \rho $ for any $R\in \mathcal R$;
\item[(2)] any word $R\in \mathcal R$ is $(\lambda ,
c)$--quasi--geodesic; \item[(3)] for any $\e $--piece of any word
$R\in \mathcal R$, the inequality $\max \{ \| U\| ,\, \| U^\prime
\| \} < \mu \| R\| $ holds (using the notation of Definition
\ref{piece}).
\end{enumerate}
Further the set $\mathcal R$ satisfies the {\it $C_1(\e , \mu ,
\lambda , c, \rho )$--condition} if in addition the condition
$(3)$ holds for any $\e ^\prime $--piece of any word $R\in
\mathcal R$.
\end{defn}

Suppose that $G$ is a group defined by
\begin{equation}\label{GA0}
G=\langle \mathcal A\; | \; \mathcal O\rangle .
\end{equation}
Given a set of
words $\mathcal R$, we consider the quotient group of $G$
represented by
\begin{equation}\label{quot}
G_1= \langle \mathcal A\; |\;
\mathcal O\cup \mathcal R\rangle .
\end{equation}

\begin{figure}
  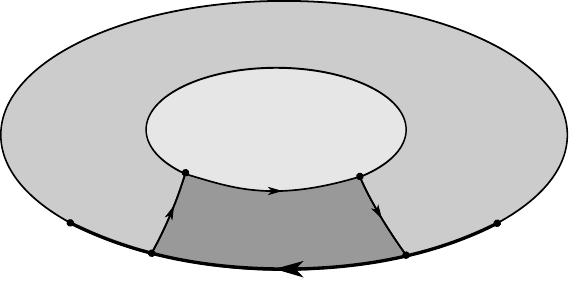
    \caption{Contiguity subdiagram. }\label{fig1}
\end{figure}

A cell in a van Kampen diagram over (\ref{quot}) is called an {\it
$\mathcal R$--cell} if its boundary label is a word from $\mathcal
R$. Let $\Delta $ be a van Kampen diagram over (\ref{quot}), $q$ a subpath of $\partial \Delta $, and $\Pi $ an $\mathcal R$--cell of $\Delta $. Suppose that there is a simple
closed path
\begin{equation}\label{pathp}
p=s_1q_1s_2q_2
\end{equation}
in $\Delta $, where $q_1$ is a subpath of
$\partial \Pi $, $q_2$ is a subpath of $q$, and
\begin{equation}\label{sides}
\max \{l( s_1),\, l(s_2) \} \le \e
\end{equation}
for some constant $\e >0$. By $\Gamma $ we denote the subdiagram of
$\Delta $ bounded by $p$. If $\Gamma $
contains no $\mathcal R$--cells, we say that $\Gamma $ is an {\it
$\e $--contiguity subdiagram} (or simply a contiguity subdiagram if $\e $ is fixed) of $\Pi $ to the subpath $q$ of $\partial \Delta $ and
$q_1$ is the {\it contiguity arc} of $\Pi $
to $q$. The ratio $l(q_1)/l(\partial \Pi )$ is called the {\it contiguity degree}
of $\Pi $ to $q$
and is denoted by $(\Pi , \Gamma , q)$. In case $q=\partial \Delta $, we talk about contiguity subdiagrams, etc., of $\Pi $ to $\partial \Delta $.  Since $\Gamma$ contains no $\mathcal R$--cells, it can be considered a diagram over (\ref{GA0}).

A van Kampen diagram $\Delta $ over (\ref{quot}) is said to be
{\it reduced} if $\Delta $ has minimal number of $\mathcal
R$--cells among all diagrams over (\ref{quot}) having the same
boundary label. When dealing with a diagram $\Delta $ over (\ref{quot}), it is convenient to consider the following transformations. Let $\Sigma $ be a subdiagram of $\Delta $ which contains no $\mathcal R$-cells, $\Sigma ^\prime $ another diagram over (\ref{GA0}) with $\lab (\partial \Sigma)\equiv\lab (\partial \Sigma ^\prime )$. Then we can remove $\Sigma $ and fill the obtained hole with $\Sigma ^\prime $. Note that this transformation does not affect $\lab(\partial \Delta )$ and the number of $\mathcal R$-cells in $\Delta $. If two diagrams over (\ref{quot}) can be obtained from each other by a sequence of such transformations, we call them {\it $\mathcal O$-equivalent}. \cite[Lemma 4.4]{Osi10} provides an analogue to the well-known Greendlinger Lemma for small cancellation over relatively hyperbolic groups. In this paper, we will make use of the more general version of this lemma appearing in the appendix of \cite{Osi10}.

\begin{lem}[\cite{Osi10}, Lemma 9.7]\label{rhGL}
Let $G$ be a group with presentation (\ref{GA0}). Suppose that the Cayley graph $\Gamma (G, \mathcal A)$ of $G$ is hyperbolic. Then for any $\lambda\in (0,1]$, $c\ge 0$, there exists $\e \ge 0$ such that for all  $\mu \in (0, 1/16]$, there exists $\rho >0$ with the following property.
Let $\mathcal R$ be a symmetrized set of words in $\mathcal A$
satisfying the $C(\e, \mu , \lambda , c , \rho )$--condition,
$\Delta $ a reduced van Kampen diagram over the presentation
(\ref{quot}) such that $\partial \Delta =q_1\cdots q_r$ for some $1\le r\le 4$, where $q_1, \ldots , q_r$ are $(\lambda , c)$-quasi-geodesic. Assume that $\Delta $ has at least one $\mathcal R$--cell. Then up to passing to an $\mathcal O$-equivalent diagram, then there is an $\mathcal R$-cell $\Pi $ of $\Delta $ and disjoint $\e $-contiguity subdiagrams $\Gamma _{j}$ of $\Pi $ to sections $q_j$, $j=1, \ldots , r$, of $\partial \Delta $ (some of them may be absent) such that $\sum\limits_{j=1}^r (\Pi, \Gamma _{j}, q_j)>1-13\mu $.
\end{lem}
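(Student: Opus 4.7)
The plan is to set up geometric constants from the hyperbolicity of $\Gamma(G,\mathcal A)$ and then induct on the number of $\mathcal R$-cells in $\Delta$, after replacing $\Delta$ with an $\mathcal O$-equivalent diagram whose $\mathcal O$-subdiagrams are geometrically tame.

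\textbf{Choice of constants and preprocessing.} Fix a hyperbolicity constant $\delta$ for $\Gamma(G,\mathcal A)$ and let $K=K(\delta,\lambda,c)$ be the quadrangle constant from Lemma~\ref{qgq}. First I would choose $\e$ as a sufficiently large multiple of $K+\delta$ so that every additive error coming from applying the quadrangle lemma to polygons of uniformly bounded complexity is absorbed by $\e$. Then, given $\mu\in(0,1/16]$, pick $\rho$ so that any quantity bounded in terms of $\e$, $K$, $\delta$, and $r\le 4$ is at most a $\mu$-fraction of $\rho$. Next, among all diagrams $\mathcal O$-equivalent to $\Delta$, select one minimising the total length of edges lying outside $\mathcal R$-cells. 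In such a diagram any maximal $\mathcal O$-subdiagram $\Sigma$ sitting between two $\mathcal R$-cells, or between an $\mathcal R$-cell and $\partial\Delta$, must have \emph{geodesic} sides in $\Gamma(G,\mathcal A)$, otherwise one could cut $\Sigma$ out and refill with a smaller diagram, contradicting minimality. This legitimises the hyperbolic-geometry arguments on all such $\Sigma$.

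\textbf{Base case $n=1$.} With only one $\mathcal R$-cell $\Pi$, the complement of $\Pi$ in $\Delta$ is a single $\mathcal O$-subdiagram, so $\partial\Pi$ and the subarcs $q_1,\dots,q_r$ of $\partial\Delta$ form an $(r{+}1)$-gon in $\Gamma(G,\mathcal A)$ whose sides are all $(\lambda,c)$-quasi-geodesic. Iterating Lemma~\ref{qgq}, all but at most $O(rK)$ of $\partial\Pi$ lies in the $K$-neighbourhood of $\bigcup q_j$. Cutting along thin bridges of length $\le\e$ produces disjoint $\e$-contiguity subdiagrams $\Gamma_j$ whose contiguity arcs cover $\|\partial\Pi\|-O(\e)$ of $\partial\Pi$, and by the choice of $\rho\ge \rho(\e,\mu)$ this forces $\sum_j(\Pi,\Gamma_j,q_j)>1-13\mu$.

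\textbf{Inductive step $n>1$.} Form the planar bipartite graph whose vertex set consists of the $\mathcal R$-cells of $\Delta$ together with $r$ vertices representing the $q_j$, with edges corresponding to the $\mathcal O$-bridges connecting them. Any bridge between two $\mathcal R$-cells $\Pi,\Pi'$ gives, by the geodesicity of its sides and a direct application of Lemma~\ref{qgq}, an $\e$-piece on each side; the $C(\e,\mu,\lambda,c,\rho)$-condition then forces the corresponding contiguity arc on $\partial\Pi$ to have length less than $\mu\|\partial\Pi\|$. An Euler-characteristic/planarity averaging on this graph (using $r\le 4$ to bound the contribution of "outer" vertices) produces an $\mathcal R$-cell $\Pi$ whose bridges to other $\mathcal R$-cells occupy strictly less than a $13\mu$-fraction of $\partial\Pi$; the complementary portion of $\partial\Pi$ must then contribute to disjoint $\e$-contiguity subdiagrams with the sections $q_j$, giving the required bound.

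\textbf{Main obstacle.} The hardest part is the planar-combinatorial argument that isolates the distinguished cell $\Pi$ and simultaneously keeps the subdiagrams $\Gamma_j$ disjoint while their contiguity degrees sum to more than $1-13\mu$. One must rule out $\mathcal R$-bridges that "wrap" around $\Pi$, and must track an $O(\e+K+\delta)$ bundle of additive errors created by replacing geodesics with quasi-geodesics; the constant $13$ (and not, say, $1$) appears precisely to leave slack for these errors, and it is to absorb them that $\rho$ must be taken very large relative to $\e$, while the bound $\mu<1/16$ is what guarantees that the averaging argument produces a strictly positive residue.
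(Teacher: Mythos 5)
This statement is not proved in the present paper: the authors cite it directly from \cite[Lemma 9.7]{Osi10} and add only a one-line remark that, upon inspecting the proof there (specifically the choice made in equation~(36) of \cite{Osi10}), the constant $\e$ can be taken independent of $\mu$. So there is no ``paper's own proof'' against which to compare your argument; you are attempting to reconstruct a proof that lives in the cited reference.

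Your outline follows the same broad strategy as \cite{Osi10} -- a Greendlinger-type induction on the number of $\mathcal R$-cells, combined with hyperbolic thin-polygon estimates -- but the central step is missing. The inductive step is carried entirely by the sentence ``An Euler-characteristic/planarity averaging on this graph produces an $\mathcal R$-cell $\Pi$ whose bridges to other $\mathcal R$-cells occupy strictly less than a $13\mu$-fraction of $\partial\Pi$''; this is precisely the content of the lemma and is asserted rather than argued. In the actual proof this is a delicate weight-counting argument over a system of contiguity subdiagrams and bonds, occupying several pages; it is where the specific thresholds $13\mu$ and $\mu\le 1/16$ come from, and they cannot be recovered from a generic ``absorb errors by taking $\rho$ large'' principle. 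Several technical points are also glossed over: the claim that minimising total edge length outside $\mathcal R$-cells forces all $\mathcal O$-bridges to be \emph{geodesic} is not quite right as stated (only the internal side-arcs $s_1,s_2$ of bridges are subject to $\mathcal O$-replacement, and one must argue they can be shortened without disturbing the rest of the diagram); Lemma~\ref{qgq} applies to quadrangles, so the passage to $(r{+}1)$-gons requires an explicit subdivision with its own error bookkeeping; and lying in a $K$-neighbourhood of $\bigcup q_j$ does not by itself yield disjoint $\e$-contiguity subdiagrams whose degrees sum to more than $1-13\mu$ -- one must construct the bridges, cap their lengths by $\e$ rather than $K$, and control overlaps. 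As it stands the proposal is a plausible roadmap for the argument in \cite{Osi10}, not a proof.
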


This is actually a slight restatement of \cite[Lemma 9.7]{Osi10}, since we will need to choose $\e$ independent of $\mu$. However, this follows immediately from the choice of $\e$ in the proof of this lemma (see \cite[equation 36]{Osi10}).  In fact, aside from the inductive proof of this lemma, \cite{Osi10} only  makes use of the special case when $r=1$; we will need the more general statement for the proof of Lemma \ref{gl2}.


\section{Conjugacy and elementary subgroups in small cancellation quotients}


Throughout this section, let $G$ and $G_1$ be groups defined by (\ref{GA0}) and (\ref{quot}), respectively. We suppose $G$ is hyperbolic relative to $\Hl$, and $\mathcal A=X\cup\mathcal H$, where $X$ is a finite relative generating set of $G$ with respect to $\Hl$ and $\mathcal H$ is defined by (\ref{H}).  Let $\delta$ denote the hyperbolicity constant of $\Gamma (G, \mathcal A)$ provided by Theorem \ref{hms}.

The following lemma is a combination of Lemma 5.1 and Lemma 6.3 from \cite{Osi10}.

\begin{lem}\label{G1}
For any $\lambda\in (0,1]$, $c\ge 0$, $N>0$, there exist $\mu
>0$, $\e \ge 0$, and $\rho >0$ such that for any finite
symmetrized set of words $\mathcal R$ satisfying the $C_1(\e , \mu ,
\lambda , c, \rho )$--condition, the following hold.
\begin{enumerate}

\item[(a)] The group $G_1$ is hyperbolic
relative to the collection of images of subgroups $H_\lambda,
\lambda \in \Lambda $, under the natural homomorphism $G\to G_1$.

\item[(b)] The restriction of the natural homomorphism $G\to G_1$ to the subset of elements of length at most $N$ with
respect to the generating set $\mathcal A$ is injective.

\item[(c)]
Every element of finite order in $G_1$  is the image of an element of finite order of $G$.
\end{enumerate}
\end{lem}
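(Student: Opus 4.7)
The plan is to verify (a), (b), (c) by exploiting Lemma \ref{rhGL} as the principal tool, after fixing the small-cancellation parameters $\lambda$, $c$, $\mu$, $\varepsilon$, $\rho$ in a suitable order. Theorem \ref{hms} furnishes the hyperbolicity of $\Gamma(G,\mathcal{A})$ needed to invoke Lemma \ref{rhGL}, so given $\lambda\in(0,1]$, $c\ge 0$ we first extract the $\varepsilon$ produced there; then choose $\mu>0$ small enough for the estimates below (e.g.\ $\mu\le 1/32$); and finally enlarge $\rho$ last to absorb all length constants.

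For part (a), the goal is a linear relative isoperimetric inequality for $G_1$ over the ordinary presentation
$\langle \mathcal{A}\mid \mathcal{S}\cup\mathcal{Q}\cup\mathcal{R}\rangle$.
Given a word $W$ of length $n$ representing $1$ in $G_1$, take a reduced van Kampen diagram $\Delta$ over this presentation with $\lab(\partial\Delta)\equiv W$, and induct on the number of $\mathcal{R}$-cells. If $\Delta$ has no $\mathcal{R}$-cells then it is a diagram over $G$, and the number of $\mathcal{Q}$-cells is bounded by $Ln$ by relative hyperbolicity of $G$. Otherwise apply Lemma \ref{rhGL} with $r=1$ to obtain an $\mathcal{R}$-cell $\Pi$ together with an $\varepsilon$-contiguity subdiagram $\Gamma$ to $\partial\Delta$ of contiguity degree $>1-13\mu$. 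Excise $\Pi\cup\Gamma$ and re-glue along the short side paths $s_1,s_2$ (each of length $\le\varepsilon$); the resulting diagram has one fewer $\mathcal{R}$-cell and boundary length $n'\le n-(1-13\mu)\lambda\rho/\text{const}+2\varepsilon$. Iterating yields $N_{\mathcal{R}}(\Delta)\le C_1 n$ and $N_{\mathcal{Q}}(\Delta)\le C_2 n$ for constants depending only on the chosen parameters, which is (a).

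Part (b) is a localization of the same argument. Fix $N$ and enlarge $\rho$ so that
$(1-13\mu)\lambda\rho > N+c+2\varepsilon$.
If $g\in G$ with $|g|_\mathcal{A}\le N$ dies in $G_1$, take a shortest word $W$ representing $g$ and a reduced diagram $\Delta$ over (\ref{quot}) with $\lab(\partial\Delta)\equiv W$. Assume for contradiction that $\Delta$ has an $\mathcal{R}$-cell; then Lemma \ref{rhGL} produces $\Pi$ and a contiguity subdiagram $\Gamma$ with contiguity arc $q_1\subset\partial\Pi$ of length $>(1-13\mu)\rho$, opposite an arc $q_2\subset\partial\Delta$ of length $\le N$. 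Quasi-geodesicity of $\lab(\partial\Pi)$ and $\l(s_i)\le\varepsilon$ force
$(1-13\mu)\rho \le \lambda^{-1}(N+c+2\varepsilon)$,
contradicting the choice of $\rho$. Hence $\Delta$ has no $\mathcal{R}$-cells, so $W=1$ in $G$ and $g=1$.

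Part (c) is the real obstacle and is where the $C_1$ condition (control of $\varepsilon'$-pieces, i.e.\ pieces from the same relator) is essential rather than the weaker $C$ condition. Given $g\in G_1$ of order $n$, choose a cyclically reduced word $W$ representing an element of minimal cyclic $\mathcal{A}$-length in the conjugacy class $g^{G_1}$, and let $\Delta$ be a reduced disk diagram with $\lab(\partial\Delta)\equiv W^n$. Partition $\partial\Delta$ into at most four quasi-geodesic arcs (e.g.\ grouping consecutive copies of $W$ into four blocks) and apply the $r\le 4$ version of Lemma \ref{rhGL}. A contiguity subdiagram $\Gamma$ of an $\mathcal{R}$-cell $\Pi$ to $\partial\Delta$ with total degree $>1-13\mu$ would produce a long subword of a relator that is matched, via the periodicity $W^n$, against a translate of itself along $\partial\Delta$; this translate supplies either a long $\varepsilon$-piece shared with a distinct cyclic shift of a relator (contradicting the $C$-condition) or a long $\varepsilon'$-piece within the same relator (contradicting the $C_1$-condition). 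Either way $\Delta$ carries no $\mathcal{R}$-cells, so $W^n=1$ in $G$, and the element of $G$ represented by $W$ is a torsion preimage of $g$.

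The main obstacle is (c): (a) and (b) are classical length/diameter book-keeping on top of Lemma \ref{rhGL}, but lifting torsion requires ruling out $\mathcal{R}$-cells in a diagram whose boundary is an $n$-th power, which is exactly where the self-overlap hypothesis built into the $C_1$ condition earns its keep.
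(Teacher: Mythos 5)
The paper does not give a proof of this lemma at all: it is imported verbatim, the text preceding it stating that it ``is a combination of Lemma 5.1 and Lemma 6.3 from \cite{Osi10}.'' So there is no internal proof to compare against, and your sketch must be judged on its own terms as a reconstruction of the arguments in \cite{Osi10}.

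Your argument for (b) is essentially sound: a shortest word $W$ for $g$ gives a geodesic path, so Lemma~\ref{rhGL} applies with $r=1$, and the resulting contiguity degree plus quasi-geodesicity of the relator boundary forces $\rho$ to be bounded in terms of $N$, $c$, $\e$, $\mu$, a contradiction for large $\rho$.

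For (a), however, the application of Lemma~\ref{rhGL} is not legitimate. That lemma requires $\partial\Delta=q_1\cdots q_r$ with $r\le 4$ and each $q_j$ a $(\lambda,c)$-quasi-geodesic. For a diagram whose boundary is labeled by an arbitrary word $W$ representing $1$, there is no such decomposition: the boundary is a closed loop, and a closed $(\lambda,c)$-quasi-geodesic has length at most $c/\lambda$, so $r=1$ is vacuous except for very short boundaries. Cutting $W$ into four pieces does not help, since those pieces are words in $\mathcal{A}$ with no reason to be quasi-geodesic in $\Gamma(G,\mathcal{A})$. So the inductive excision step never gets off the ground, and the ``iterating yields $N_{\mathcal{R}}(\Delta)\le C_1 n$'' conclusion is unsupported. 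The actual route to relative hyperbolicity of $G_1$ in \cite{Osi10} is not a direct linear-area induction of this form.

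For (c) the gap is deeper. You choose $W$ geodesic in $G_1$ (or of minimal cyclic $\mathcal{A}$-length in the $G_1$-conjugacy class), but Lemma~\ref{rhGL} and the quasi-geodesic stability that would make the periodicity argument work require the sides of $\partial\Delta$ to be $(\lambda,c)$-quasi-geodesic \emph{in $\Gamma(G,\mathcal{A})$}. That is precisely the content of Corollary~\ref{qg}, and it needs $W$ to be a geodesic word for a \emph{loxodromic element of $G$}, which is not what a $G_1$-geodesic choice gives you. There is also a missing case analysis on whether the lift of $g$ is parabolic, torsion-hyperbolic, or loxodromic in $G$ (the parabolic case must be handled via injectivity on $H_\lambda$, not via a diagram for $W^n$). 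Finally, the claimed ``contradiction via the $C_1$-condition'' is asserted rather than derived: a long contiguity of an $\mathcal{R}$-cell to a periodic boundary does not on its face produce an $\e$-piece or $\e'$-piece; one has to shift along the period, show the two contiguity arcs are disjoint, and control the side-path lengths, which is exactly the delicate work the paper carries out in the proof of Lemma~\ref{gl1}(a). That lemma is explicitly modeled on the proof of Lemma 6.3 in \cite{Osi10}, and the fact that it takes a page of careful estimates rather than one sentence is a signal that your one-sentence version glides over the actual content.

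So: (b) is fine, (a) applies Lemma~\ref{rhGL} outside its hypotheses, and (c) has the right heuristic (periodicity against the $C_1$-condition) but omits the quasi-geodesicity input from Corollary~\ref{qg}, the loxodromic/parabolic case split, and the disjoint-contiguity-arc bookkeeping that turns the heuristic into a proof.
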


For an element $g\in G$, the {\it translation number} of $g$ with respect to $\mathcal A$ is defined to be $$\tau _\mathcal A (g)=\lim\limits_{n\to \infty}\frac{|g^n|_\mathcal A }{n}.$$ This limit always exists and is equal to $\inf\limits_n (|g^n| \mathcal A /n) $ \cite{GS}.
The lemma below can be found in \cite[Theorem 4.43]{Osi06a}.

\begin{lem}\label{cyc}
There exists $d>0$ such that for any loxodromic element $g\in G$ we have $\tau _{X\cup \mathcal H} (g)\ge
d$.
\end{lem}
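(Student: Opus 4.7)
The plan is to establish that for every loxodromic $g\in G$, the map $n\mapsto g^n$ is a quasi-isometric embedding of $\mathbb Z$ into $\Gamma(G,X\cup\mathcal H)$ with quasi-geodesic constants depending only on the hyperbolicity constant $\delta$ of $\Gamma(G,X\cup\mathcal H)$ (from Theorem~\ref{hms}) and the relative isoperimetric constant, but \emph{not} on $g$. Once such a uniform estimate $|g^n|_{X\cup\mathcal H}\ge \lambda n-c$ is in hand, dividing by $n$ and letting $n\to\infty$ immediately yields $\tau_{X\cup\mathcal H}(g)\ge \lambda$, and we may take $d=\lambda$.

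For an individual loxodromic $g$, positivity of $\tau(g)$ is not hard: since $g$ has infinite order and is not conjugate into any parabolic subgroup, the orbit $\{g^n\}$ cannot stay in any bounded ball of $\Gamma(G,X\cup\mathcal H)$; otherwise, via a pigeonhole argument and Lemma~\ref{malnorm} applied to a non-trivial power of $g$ lying in a parabolic coset, one would contradict the hypothesis that $g$ is loxodromic. Combined with the hyperbolicity of $\Gamma(G,X\cup\mathcal H)$, this yields that the orbit is a quasi-geodesic with some constants possibly depending on $g$.

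The crucial step is upgrading this to \emph{uniform} constants. I would argue by contradiction: suppose there is a sequence of loxodromic elements $g_i$ with $\tau_{X\cup\mathcal H}(g_i)\to 0$. Choose $n_i$ with $|g_i^{n_i}|_{X\cup\mathcal H}\le n_i/i$. Consider the closed geodesic polygon in $\Gamma(G,X\cup\mathcal H)$ with vertices $1, g_i, g_i^2,\ldots, g_i^{n_i}$, whose $n_i$ short sides each have length $|g_i|_{X\cup\mathcal H}$ and whose closing geodesic $\gamma_i$ from $g_i^{n_i}$ back to $1$ has length $o(n_i)$. Applying Lemma~\ref{qgq} repeatedly to quadrangles cut out of this polygon, together with $\delta$-hyperbolicity of $\Gamma(G,X\cup\mathcal H)$, one forces most of the short sides to lie in a bounded $K(\delta)$-neighborhood of $\gamma_i$. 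A pigeonhole argument along $\gamma_i$ then produces indices $k<\ell$ such that $g_i^k$ and $g_i^{\ell}$ are connected by a short path through cosets of parabolic subgroups; by Lemma~\ref{malnorm} and the definition of relative hyperbolicity this forces the nontrivial element $g_i^{\ell-k}$ to be parabolic, contradicting the loxodromicity of $g_i$.

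The main obstacle is precisely this uniformity argument. The hyperbolicity constant $\delta$ and relative isoperimetric constant are global invariants of $(G,\Hl,X)$ and are insensitive to $g$, so the obstruction produced above depends only on these fixed data, yielding one constant $d$ that works for every loxodromic $g\in G$ simultaneously.
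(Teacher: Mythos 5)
The paper itself gives no proof of Lemma~\ref{cyc}; it simply cites \cite[Theorem 4.43]{Osi06a}, so there is no in-paper argument to compare against. Your outline has the right shape --- uniformity should come from the fixed data $(G,\Hl,X,\delta)$ --- but the argument breaks at the crucial step. From hyperbolicity of $\Gamma(G,X\cup\mathcal H)$ and Lemma~\ref{qgq} you can indeed force many powers $g_i^j$ into a bounded neighborhood of the closing geodesic $\gamma_i$, and pigeonholing then produces a nontrivial power $g_i^{\ell-k}$ that is \emph{short in the relative metric}. But ``short in $\Gamma(G,X\cup\mathcal H)$'' does not imply ``parabolic'', and Lemma~\ref{malnorm} (malnormality of parabolics) provides no such upgrade: every element of every $H_\lambda$ has relative length $\le 1$, as does, say, any product $h\,x\,h'$ with $h,h'\in\mathcal H$ and $x\in X$, and most such elements are not parabolic. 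The missing ingredient is the bounded coset penetration property --- Osin's componentwise control of how quasi-geodesics in $\Gamma(G,X\cup\mathcal H)$ pass through individual parabolic cosets $gH_\lambda$ --- which is the actual technical content of \cite[Theorem 4.43]{Osi06a} and cannot be recovered from $\delta$-hyperbolicity of the relative Cayley graph together with Lemma~\ref{malnorm} alone.

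There are also two secondary gaps in your paragraph treating an individual loxodromic $g$. First, the pigeonhole you invoke to land a power of $g$ inside a single parabolic coset when the orbit is bounded does not go through directly, because $\Gamma(G,X\cup\mathcal H)$ is not locally finite: a bounded ball meets infinitely many parabolic cosets, so boundedness of $\{g^n\}$ by itself forces nothing. Second, the passage from ``unbounded orbit in a hyperbolic graph'' to ``quasi-geodesic orbit'' is precisely the elliptic/loxodromic dichotomy for this non-proper, non-cocompact action, which again requires acylindricity/BCP-type input rather than being an automatic consequence of hyperbolicity of the graph; so this step is essentially as hard as the uniformity you are trying to establish.
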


Given a word $W$ in $\mathcal A$, we say that a word $U$ is \emph{$W$-periodic} if it is a subword of $W^n$ for some $n\in \mathbb Z\setminus\{0\}$. In what follows, when speaking about Cayley graphs or van Kampen diagrams, we denote by {\it dist} the natural metric induced by identifying each edge with the segment $[0,1]$.

\begin{cor}\label{qg}
Suppose that $W$ is a word in $\mathcal A$ representing a loxodromic element $g\in G$ such that $|g|_{\mathcal A}=\|W\|\leq C$ for some $C>0$. Then any path in $\Gamma (G, \mathcal A)$  labeled by a $W$-periodic word is $(\frac{d}{C}, 2(C+d))$ quasi-geodesic.
\end{cor}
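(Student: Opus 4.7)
The plan is to reduce the quasi-geodesic inequality to a statement about the word length of a single element, and then combine a triangle-inequality argument with the translation-number lower bound from Lemma~\ref{cyc}.

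First, observe that if $U$ is $W$-periodic then every subword of $U$ is also $W$-periodic, since a subword of a subword of $W^n$ is itself a subword of $W^n$. Consequently, any subpath of a path labeled by a $W$-periodic word is again labeled by a $W$-periodic word. It therefore suffices to prove the single estimate
\[
|u|_{\mathcal A}\;\ge\;\frac{d}{C}\,\|U\|-2(C+d)
\]
for every $W$-periodic word $U$, where $u\in G$ is the element represented by $U$.

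Next, given such a $W$-periodic word $U$, choose $n\in\Z\setminus\{0\}$ of minimal absolute value so that $U$ is a subword of $W^n$; replacing $W$ by $W^{-1}$ if necessary (this only uses that $\mathcal A$ is symmetric and that $|g^{-1}|_{\mathcal A}=|g|_{\mathcal A}$, so Lemma~\ref{cyc} still applies), I may assume $n>0$. Write $W^n\equiv AUB$ in the free monoid on $\mathcal A$; by the minimality of $n$ we have $\|A\|,\|B\|<\|W\|\le C$. Letting $a,b\in G$ be the elements represented by $A,B$, the equality $g^n=aub$ in $G$ gives, by the triangle inequality,
\[
|u|_{\mathcal A}\;\ge\;|g^n|_{\mathcal A}-|a|_{\mathcal A}-|b|_{\mathcal A}\;\ge\;|g^n|_{\mathcal A}-2C.
\]

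Finally, combine two bounds. Since $g$ is loxodromic, Lemma~\ref{cyc} yields $\tau_{\mathcal A}(g)\ge d$, hence $|g^n|_{\mathcal A}\ge dn$ (using $\tau_{\mathcal A}(g)=\inf_m|g^m|_{\mathcal A}/m$). On the other hand, $\|U\|\le\|W^n\|=n\|W\|\le nC$, so $n\ge\|U\|/C$. Chaining these,
\[
|u|_{\mathcal A}\;\ge\;dn-2C\;\ge\;\frac{d}{C}\,\|U\|-2C\;\ge\;\frac{d}{C}\,\|U\|-2(C+d),
\]
which is the required inequality. Applying this to each subpath of a path labeled by a $W$-periodic word gives the $(d/C,\,2(C+d))$-quasi-geodesic property.

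The argument is essentially mechanical once the two inputs (Lemma~\ref{cyc} and minimality of $n$) are in place; the only point that requires care is to choose $n$ minimal so that $\|A\|,\|B\|<\|W\|\le C$, which ensures the error in the triangle inequality is absorbed into the additive constant, and to check that the loss coming from $W$-periodicity with a negative exponent is harmless because $\mathcal A$ is symmetric.
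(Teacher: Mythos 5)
Your proof is correct and follows essentially the same strategy as the paper's: combine the translation-number lower bound $|g^n|_{\mathcal A}\ge dn$ from Lemma~\ref{cyc} with a triangle-inequality argument that absorbs the incomplete copies of $W$ at the ends into the additive constant, then invoke closure of $W$-periodicity under taking subwords. The only cosmetic difference is the bookkeeping: you choose the minimal $n$ with $U$ a subword of $W^n$ and estimate $\|U\|\le n\|W\|$ from above, whereas the paper extracts the maximal subpath labeled by an exact power $W^n$ and bounds $\ell(p)\le n\|W\|+2C$ from below; both yield the stated constants (yours in fact gives the slightly better additive constant $2C$).
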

\begin{proof}
First observe that for any $n\in \mathbb N$,
\begin{equation}\label{qgc}
|g^n|_{\mathcal A}\geq n\inf_i\left(\frac{1}{i}|g^i|_{\mathcal A}\right)\geq nd\geq\frac{d}{C}n|g|_{\mathcal A}
\end{equation}
where $d$ is the constant from Lemma \ref{cyc}. Now suppose $p$ is a path labeled by a $W$-periodic word. Let $q$ be a maximal (maybe empty) subpath of $p$, labeled by $W^{n}$ for some $n\in \mathbb Z$ (we identify $W^0$ with the empty word). Then, $\l(p)\leq n |g|_{\mathcal A}+2C$. Combining this with \eqref{qgc} and the triangle inequality, we get
\[
\d(p_-, p_+)\geq \d(q_-, q_+)-2C= |g^n|_{\mathcal A}-2C\geq\frac{d}{C}n|g|_{\mathcal A}-2C\geq\frac{d}{C}\l(p)-2C-2d.
\]
Since a subword of a $W$-periodic word is also $W$-periodic, we are done.
\end{proof}

The next lemma provides a bound on contiguity degrees of $\mathcal R$-cells to paths with periodic labels and on a possible overlap between two contiguity subdiagrams to a geodesic if $\mathcal R$ satisfies a small cancellation condition.

\begin{lem}\label{gl1}
For any $\lambda \in (0, 1]$, $c\ge 0$, $\e>0$,  and $N\in\N$, there exist constants $D=D(\e,\lambda,c,\delta, N)$ and $\e_1\ge\e$ such that for all $\mu>0$ and $\rho>0$ and any set of words $\mathcal R$ satisfying the $C_1(\e_1 , \mu ,\lambda , c, \rho )$ condition, the following holds.
\begin{enumerate}
\item[(a)] Let $W$ be a word in $\mathcal A$ representing a loxodromic $g\in G$ such that $|g|_{\mathcal A}=\|W\|\leq N$. Then for any $R\in\mathcal R$ and any  quadrangle $Q=s_1q_1s_2q_2$ in $\Gamma(G, \mathcal A)$, where $\l(s_i)\leq\e$ for $i=1,2$, $\lab(q_1)$ is a subword of $R$, and $\lab(q_2)$ is $W$-periodic, we have $\l(q_1)\leq D\mu\|R\|+D$.
\item[(b)] Let $U$ and $V^{\pm 1}$ be disjoint subwords of some $R\in \mathcal R$, and let $r$ be a geodesic path in $\Gamma(G,\mathcal A)$. Suppose $q_1s_1r_1t_1$ and $q_2s_2r_2t_2$ are quadrangles in $\Gamma(G, \mathcal A)$ such that $\lab(q_1)\equiv U$, $\lab(q_2)\equiv V$, $r_1$, $r_2$ are subpaths of $r^{\pm 1}$, and $\l(s_i), \l(t_i)\leq\e$ for $i=1,2$. Then the overlap between $r_1$ and $r_2$ is at most $\mu\|R\|+\e_1$.
\end{enumerate}
\end{lem}
\begin{proof}
Without loss of generality we can assume that $s_1,s_2$ are geodesic. Since the $C_1$ condition becomes stronger as $\lambda$ increases and $c$ decreases, it suffices to assume that $\lambda\le \frac{d}{N}$ and $c\ge 2N+2d$. Thus $Q$ is a $(\lambda , c)$-quasi-geodesic quadrangle by Corollary \ref{qg}. Choose
$$
\e_1=2(K+\e),
$$
where $K=K(\lambda, c, \delta)$ is the constant provided by Lemma \ref{qgq}.

Our proof of part (a) will closely follow the ideas from the proof of \cite[Lemma 6.3]{Osi10}. Passing to a cyclic shift of $W^{\pm 1}$, we can assume $q_2$ is labeled by a prefix of $W^n$ for some $n\in \mathbb N$. We will derive a contradiction under the assumption that $q_1$ is sufficiently long; the exact constant $D$ can be easily extracted from the proof. First, note that the triangle inequality gives
\begin{equation}\label{ti}
\l(q_1)\leq \frac{1}{\lambda}(\d((q_1)_-, (q_1)_+)+c)\leq \frac{1}{\lambda}(\l(q_2)+2\e+c).
\end{equation}
Now, if $\l(q_2)\leq\frac{4}{3}\|W\|$ we have
\[
\l(q_1)\leq\frac{1}{\lambda}\left(\frac{4}{3}N+2\e\right)+c.
\]
Thus, it suffices to assume $\l(q_2)>\frac{4}{3}\|W\|$. Then we can decompose $\lab (q_2)$ as $\lab (q_2)\equiv UV_1UV_2$, where
\begin{equation}\label{lq2}
\frac{\lambda^2\l(q_2)}{5}\leq\|U\|\leq\frac{\lambda^2\l(q_2)}{4}
\end{equation}
and
\begin{equation}\label{lV1}
\|V_1\|>\frac{\l(q_2)}{3}.
\end{equation}

\begin{figure}
  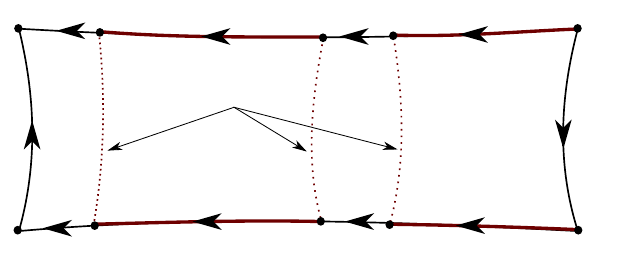
    \caption{Decompositions of $q_1$ and $q_2$ in the proof of Lemma \ref{gl1} (a).}\label{fig2}
\end{figure}

Let $q_2=u_1v_1u_2v_2$ be the corresponding decomposition of the path $q_2$ (see Fig. \ref{fig2}). Then by Lemma \ref{qgq}, we can find an initial subpath $r_1$ of $q_1^{-1}$ and a subpath $r_2$ of $q_1^{\pm 1}$ such that
\begin{equation}\label{sp}
\d((r_i)_{\pm}, (u_i)_{\pm})\leq K+\e
\end{equation}
for $i=1,2$. Now, we claim that for sufficiently long $q_1$, $r_1$ and $r_2$ will be disjoint. Indeed using (\ref{lq2}) we obtain
\[
\l(r_1)\leq\frac{1}{\lambda}(\d((r_1)_-, (r_1)_+)+c)\leq\frac{1}{\lambda}(\l(u_1)+2\e+2K+c)
\leq\frac{\lambda\l(q_2)}{4}+\frac{2\e+2K+c}{\lambda}.
\]
However, if $r_1$ contains $(r_2)_-$, then by (\ref{lV1}) we have
\[
\l(r_1)\geq \d((r_1)_-, (r_2)_-)\geq\lambda\l(u_1v_1)-c-2\e-2K\geq\frac{\lambda\l(q_2)}{3}-c-2\e-2K.
\]
These inequalities contradict each other for sufficiently large $\l(q_2)$, which can be ensured if $q_1$ is long enough by \eqref{ti}.

Thus, we can decompose $q_1^{-1}=r_1t_1r_2^{\xi}t_2$, for some $\xi=\pm 1$ and $t_1, t_2$ where at least $t_1$ is non-empty. Let $\lab(q_1)^{-1}\equiv R_1T_1R_2T_2$ be the corresponding decomposition of the label of $q_1^{-1}$. Then by \eqref{sp} we have $R_1=Y_1UZ_1$ and $R_2=Y_2U^{\pm 1}Z_2$ in $G$, where $\|Y_i\|,\|Z_i\|\leq K+\e$ for $i=1,2$. Thus, there exist $Y$, $Z$ such that $\|Y\|,\|Z\|\leq 2(K+\e)=\e_1$ and $R_1=YR_2^{\pm 1}Z$ in $G$. Now, since $\mathcal R$ satisfies the $C_1(\e_1 , \mu ,\lambda , c, \rho )$--condition and $R_1$, $R_2$ are disjoint subwords of $R$, we have that $\|R_1\|\leq\mu\|R\|$. Finally, using (\ref{lq2}) we obtain
\begin{align*}
\l(q_2)&\leq\frac{5}{\lambda^2}\|U\|\leq\frac{5}{\lambda^3}(\d((u_1)_-, (u_1)_+)+c)\\
&\leq\frac{5}{\lambda^3}(\l(r_1)+2\e+2K+c)\leq\frac{5}{\lambda^3}(\mu\|R\|+2\e+2K+c).
\end{align*}
Combining this with (\ref{ti}) produces a contradiction for sufficiently long $q_1$. This completes the proof of (a).

\begin{figure}
 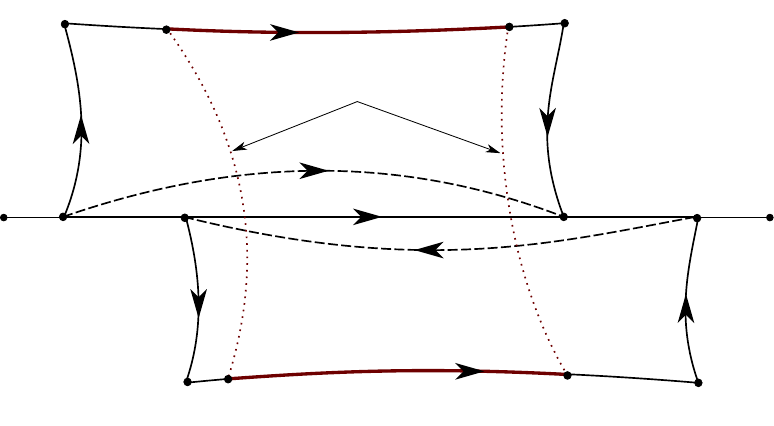
    \caption{The proof of Lemma \ref{gl1} (b). }\label{fig3}
\end{figure}

To prove (b) let $r'$ denote the overlap of $r_1$ and $r_2$ with arbitrary orientation (see Fig. \ref{fig3}). By Lemma \ref{qgq}, we can choose points $x_1$, $x_2$ on $q_1$ such that $\d((r')_-, x_1)\leq K+\e$ and $\d((r')_+, x_2)\leq K+\e$. Similarly we choose $y_1$ and $y_2$ on $q_2$ satisfying the same conditions. Now, if $x_1=x_2$ or $y_1=y_2$, then
\[
\l(r')=\d((r')_-, (r')_+)\leq 2(K+\e)=\e_1.
\]
Otherwise, we take $p_1$ to be the subpath of $q_1^{\pm 1}$ with endpoints $x_1, x_2$, and $p_2$ the subpath of $q_2^{\pm 1}$ with endpoints $y_1,y_2$. Then $\d((p_1)_{\pm}, (p_2)_{\pm})\leq 2(K+\e)$. Thus, by the $C_1(\e_1 , \mu ,\lambda , c, \rho )$--condition, we have that $l(p_1)\leq\mu\|R\|$. Thus,
\[
\l(r')=\d((r')_-, (r')_+)\leq\mu\|R\|+2(K+\e)=\mu \| R\| +\e_1.
\]
\end{proof}

The next lemma will allow us to describe maximal elementary subgroups corresponding to loxodromic elements of small length in small cancellation quotients of relatively hyperbolic groups (see the corollary after the lemma).

\begin{lem}\label{gl2}
For any $\lambda \in (0, 1]$, $c\ge 0$, and $N\in\N$,  there exist $\e_1>0$, $\mu>0 $, and $\rho >0$ such that the following holds. Suppose $\mathcal R$ satisfies the $C_1(\e_1, \mu ,\lambda , c, \rho )$--condition. Let $\alpha \colon G\to G_1$ be the natural epimorphism. Let $g$, $h$ be elements of $G$ such that $|g|_{\mathcal A}, |h|_{\mathcal A}\leq N$. Let $x\in G$ such that $\alpha(x^{-1}g^nxh^n)=1$ in $G_1$ but $x^{-1}g^nxh^n\ne 1$ in $G$ for some $n\in \mathbb N$; if $n\neq 1$, we further assume that $g$ and $h$ are loxodromic. Then there exists $y$ such that $| y|_{\mathcal A}<| x|_{\mathcal A}$, and $\alpha (y^{-1}g^nyh^n)=1 $ in $G_1$. Furthermore, $\alpha(x)\in\alpha(\langle g, y\rangle)$.
\end{lem}

\begin{proof}
 As in the proof of Lemma \ref{gl1}, it is sufficient to assume $\lambda\le\frac{d}{N}$, $c\ge 2N+2d$. Let $\e$ be chosen according to Lemma \ref{rhGL}.
Now choose $D$ and $\e_1$ according to Lemma \ref{gl1}; increasing $D$ if necessary, we also assume that $$D\geq\frac{2\e +N +c}{\lambda}.$$ Note that $D$ is independent of $\mu$ and $\rho$.  We will show that the conclusion of the lemma holds for sufficiently small $\mu$ and sufficiently large $\rho$.

Let $W$, $V$, and $X$ be shortest words in $\mathcal A$ representing $g$, $h$, and $x$ respectively. Let $\Delta$ be a reduced van Kampen diagram over \eqref{quot} with $\partial\Delta=p_1p_2p_3p_4$, $\lab(p_1)^{-1}\equiv\lab(p_3)\equiv X$, $\lab(p_2)\equiv W^n$, and $\lab(p_4)\equiv V^n$, where either $n=1$ or both $g$ and $h$ are loxodromic. Then $p_2$ and $p_4$ are $(\lambda, c)$ quasi-geodesics; this is obvious when $n=1$ and follows from Corollary \ref{qg} otherwise. Also $p_1$ and $p_3$ are geodesic paths by our choice of $X$. Since $x^{-1}g^nxh^n\neq 1$ in $G$, $\Delta$ must contain an $\mathcal R$-cell. Since $\e_1\ge\e$, $\mathcal R$ also satisfies $C_1(\e, \mu ,\lambda , c, \rho )$, hence we can apply  Lemma \ref{rhGL} for $\mu \in (0, 1/16]$ and large enough $\rho$. That is, passing to an $\mathcal O$-equivalent diagram if necessary, we can find an $\mathcal R$-cell $\Pi $ of $\Delta $ and disjoint $\e $-contiguity subdiagrams $\Gamma _{j}$ of $\Pi $ to $p_j$, $j=1, \ldots , 4$, (some of which may be empty) such that
\begin{equation}\label{sum1}
\sum\limits_{j=1}^4 (\Pi, \Gamma _{j}, p_j)>1-13\mu .
\end{equation}

We will now show that
\begin{equation}\label{sum2}
(\Pi, \Gamma _{2}, p_2)+(\Pi, \Gamma _{4}, p_4)\leq 2\left(D\mu+\frac{D}{\l(\partial\Pi)}\right).
\end{equation}
 If $g$ and $h$ are loxodromic, this follows from Lemma \ref{gl1}. When $n=1$, we let $\partial \Gamma _2= sptr$, where $p$ is a subpath of $p_2$, $r$ is a subpath of $\partial \Pi$, and $\max \{ \ell (s), \ell(t)\}<\e $. By the definition of the $C_1(\e, \mu ,\lambda , c, \rho )$ the path $r$ is $(\lambda, c)$-quasi-geodesic. Therefore,
$$
\ell(r)\le \frac1\lambda (\d(r_-, r_+)+c)\le\frac1\lambda(\ell (s) +\ell (p)+\ell(r)+c)\le \frac1\lambda(2\e +N +c)\le D.
$$
Consequently, $(\Pi, \Gamma _{2}, p_2)=\ell(r)/\l(\partial\Pi)\le D/\l(\partial\Pi)$. The analogous inequality holds true for $(\Pi, \Gamma _{4}, p_2)$ and we obtain
$$
(\Pi, \Gamma _{2}, p_2)+(\Pi, \Gamma _{4}, p_4)\leq  \frac{2D}{\l(\partial\Pi)}.
$$

Hence the inequality (\ref{sum2}) holds in both cases.

Combining (\ref{sum1}) and (\ref{sum2}) gives that for at least one of $i=1, 3$, we have
\begin{equation}\label{cd}
(\Pi, \Gamma_i, p_i)>\frac{1}{2}\left(1-(13+2D)\mu-\frac{2D}{\l(\partial\Pi)}\right).
\end{equation}

Without loss of generality, we assume this holds for $i=1$. Let $\partial\Gamma_1=s_1r_1t_1q_1$, where $\l(s_i)\leq\e$ for $i=1,2$, $q_1$ is a subpath of $\partial\Pi$, $r_1$ is a subpath of $p_1$. There are two cases to consider.

\smallskip

\begin{figure}
 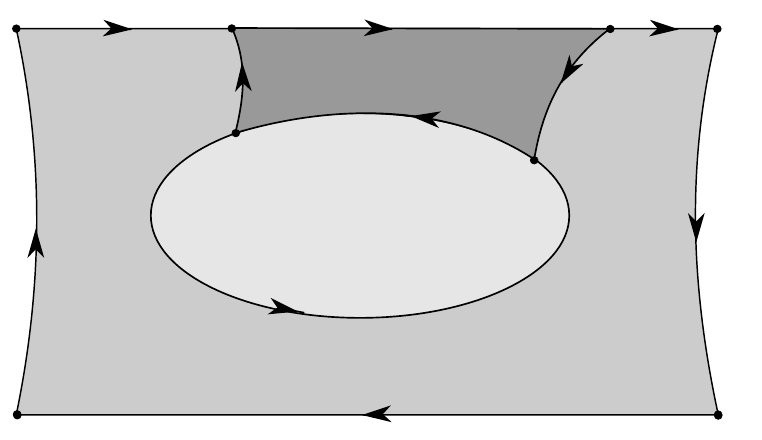
    \caption{Case 1 in the proof of Lemma \ref{gl2}. }\label{fig4}
\end{figure}

\smallskip

\begin{figure}
 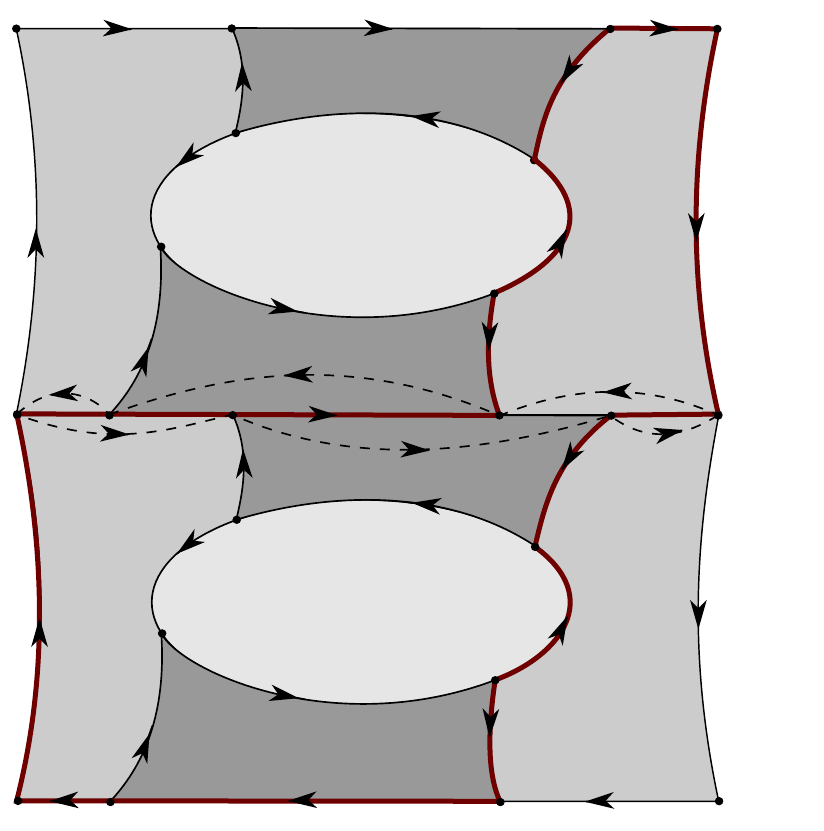
    \caption{Case 2 in the proof of Lemma \ref{gl2}. }\label{fig5}
\end{figure}

\noindent{\it Case 1.} First suppose $\Gamma_3$ is empty. Then $(\Pi, \Gamma_1, q_1)>\left(1-(13+2D)\mu-\frac{2D}{\l(\partial\Pi)}\right)$, so $\l(q_1)>(1-(13+2D)\mu)\l(\partial\Pi)-2D$. Let $\partial\Pi=q_1z$ (see Fig. \ref{fig4}). Then the path $s_1^{-1}zt_1^{-1}$ has the same start and end vertices as $r_1$. However,
\begin{align*}
\l(s_1^{-1}zt_1^{-1})&\leq \l(\partial\Pi)-\l(q_1)+2\e<\l(\partial\Pi)-(1-(13+2D)\mu)\l(\partial\Pi)+2D+2\e\\ & =(13+2D)\mu\l(\partial\Pi)+2D+2\e
\end{align*}
while
\begin{align*}
\l(r_1)& \geq \d((q_1)_-, (q_1)_+)-2\e\geq\lambda\l(q_1)-c-2\e\\ & >\lambda((1-(13+2D)\mu)\l(\partial\Pi)-2D)-c-2\e.
\end{align*}
Thus, for sufficiently small $\mu$ and suffiently large $\l(\partial\Pi)$ (which can be ensured by choosing large enough $\rho$), we will get that $\l(r_1)>\l(t_1zs_1)$. However, this contradicts the fact that $r_1$ is a subpath of a geodesic $p_1$. Thus, we can assume that $\Gamma_3$ is non-empty.

{\it Case 2.} Let $\partial\Gamma_3=s_2r_2t_2q_2$, where $\l(t_2), \l(s_2)\leq\e$, $q_2$ is a subpath of $\partial\Pi$, and $r_2$ is a subpath of $p_3$. Also, let $\partial\Pi=q_1z_1q_2z_2$. Now, we decompose $p_1=u_1r_1v_1$ and $p_3=u_2r_2v_2$. For definiteness we suppose that $\l(v_2)\le \l(u_1)$ (in the case $\l(v_2)\ge \l(u_1)$ the proof is similar). Let $\Delta ^\prime $ be a copy of $\Delta $. Given a path $a$ in $\Delta $, we denote its copy by $a^\prime$. Let us glue $\Delta $ and $\Delta ^\prime$ by identifying $p_3$ to $(p_1^\prime )^{-1}$(see Fig. \ref{fig5}). Let $j$ denote the (possibly empty) intersection of $r_1^\prime $ and $r_2^{-1}$. Consider the path $p=v_2^{-1}r_2^{-1}s_2^{-1}z_2t_1^{-1}v_1$. We want to show that
\begin{equation}\label{pisshorter}
\l(p)<\|X\|=\l(p_1).
\end{equation}

Observe that $\l(p)=\l(p_1)-\l(r_1)+\l(j) +\l(s_2^{-1}z_2t_1^{-1})$. Thus, we only need to show that $\l(r_1)>\l(j) +\l(s_2^{-1}z_2t_1^{-1})$. By Lemma \ref{gl1}, $\l(j)\leq\mu \l(\partial\Pi)+\e_1$. Also, $$\l(z_2)\leq\l(\partial\Pi)-\l(q_1)-\l(q_3)\leq(13+2D)\mu\l(\partial\Pi)+2D$$ by (\ref{sum1}) and (\ref{sum2}). Thus,
\begin{align*}
\l(j)+\l(s_2^{-1}z_2t_1^{-1}) & \leq\mu\l(\partial\Pi)+\e_1+(13+2D)\mu\l(\partial\Pi)+2D+2\e\\ & =(14+2D)\mu\l(\partial\Pi)+\e_1+2D+2\e.
\end{align*}
However, by \eqref{cd}
\begin{align*}
\l(r_1)& \geq \d((q_1)_-,(q_1)_+)-2\e\geq\lambda\l(q_1)-c-2\e>\\ & \frac{\lambda}{2}(1-(13+2D)\mu)\l(\partial\Pi)-\lambda D-c-2\e.
\end{align*}
Thus, we will have $\l(r_1)>\l(j)+\l(s_2^{-1}z_2t_1^{-1})$ as long as $\mu$ is sufficiently small and $\l(\partial\Pi)$ is sufficiently large; the later condition can be guaranteed by choosing sufficiently large $\rho$. This completes the proof of (\ref{pisshorter}).

Now let $y$ be the element of $G_1$ represented by $\lab(p)$. By (\ref{pisshorter}), we have $| y|_{\mathcal A}<| x|_{\mathcal A}$. Observe that $pp_2(p^\prime )^{-1}p^{\prime}_4$ is a closed path (it is represented by the bold line on Fig. \ref{fig5}), hence $yg^ny^{-1}h^n=\lab(p)\lab(p_2)\lab((p^\prime )^{-1})\lab(p^{\prime}_4)=1$ in $G_1$. Observe that $pp_2p_3$ is also a closed path, so $yg^nx=1$ in $G_1$, therefore $\alpha(x)\in\alpha(\langle g, y\rangle)$.
\end{proof}

The main result of this section is the following.

\begin{cor}\label{gc} For all $\lambda \in (0, 1]$, $c\ge 0$ and $N\in \N$, there exist  $\e_1>0$, $\mu>0 $, and $\rho >0$ such that if $\mathcal R$ satisfies the $C_1(\e_1, \mu ,\lambda , c, \rho )$ and $\alpha$ is the natural epimorphism from $G$ to $G_1$, then the following conditions are satisfied:
\begin{enumerate}
\item[(a)]
If $g, h\in B_{G, \mathcal A}(N)$, then $\alpha(g)\sim\alpha(h)$ if and only if $g\sim h$.

\item[(b)]
If $g\in B_{G, \mathcal A}(N)$ is loxodromic, then $E_{G_1}(\alpha(g))=\alpha(E_G(g))$.\end{enumerate}
\end{cor}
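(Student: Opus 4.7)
We choose $\e_1$, $\mu$, $\rho$ by invoking Lemma \ref{gl2} with the given $\lambda$, $c$, and the bound $N$ (enlarged by a constant if necessary so that the small-cancellation conclusions also cover equations involving inverses and short modifications of the chosen elements). The forward direction of (a) and the inclusion $\alpha(E_G(g)) \subseteq E_{G_1}(\alpha(g))$ in (b) are immediate: $\alpha$ transports any relation $x^{-1}gx = h$ and, by the characterization in Lemma \ref{Eg}(b), any relation $k^{-1}g^m k = g^{\pm m}$ directly into $G_1$.

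\emph{Proof of (a).} Suppose $\alpha(g) \sim \alpha(h)$ in $G_1$ and first assume $g$ and $h$ are loxodromic in $G$. Among all $x \in G$ satisfying $\alpha(x^{-1}gxh^{-1}) = 1$, choose $x$ of minimal $\mathcal A$-length; such an $x$ exists because any lift of the conjugator in $G_1$ is a candidate. If $x^{-1}gxh^{-1} = 1$ already holds in $G$, then $g \sim h$ and we are done. Otherwise, Lemma \ref{gl2} applied with $n = 1$ to the loxodromic pair $(g, h^{-1})$, each of $\mathcal A$-length at most $N$ (using that $\mathcal A$ is symmetric), produces $y \in G$ with $|y|_{\mathcal A} < |x|_{\mathcal A}$ and $\alpha(y^{-1}gyh^{-1}) = 1$, contradicting minimality. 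The parabolic and finite-order cases are handled separately using the injectivity of $\alpha$ on a sufficiently large ball (Lemma \ref{G1}(b,c)) together with Lemma \ref{malnorm} applied inside the relatively hyperbolic quotient $G_1$ to separate cosets of distinct parabolic subgroups.

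\emph{Proof of (b).} We establish the nontrivial inclusion $E_{G_1}(\alpha(g)) \subseteq \alpha(E_G(g))$ by strong induction on the quantity
\[
\l(\bar h) := \min\{|x|_{\mathcal A} : x \in G,\; \alpha(x) = \bar h\}.
\]
Let $\bar h \in E_{G_1}(\alpha(g))$ and let $x \in G$ realize $\l(\bar h)$. By Lemma \ref{Eg}(b) applied in $G_1$, there exists $m \ge 1$ with $\bar h^{-1}\alpha(g)^m \bar h = \alpha(g)^{\pm m}$, which rewrites as $\alpha(x^{-1}g^m x g^{\mp m}) = 1$. If this relation already holds in $G$, then $x \in E_G(g)$ by Lemma \ref{Eg}(b), hence $\bar h = \alpha(x) \in \alpha(E_G(g))$. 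Otherwise, Lemma \ref{gl2} applied to the loxodromic pair $(g, g^{\mp 1})$ with index $n = m$ produces $y \in G$ satisfying $|y|_{\mathcal A} < |x|_{\mathcal A}$, $\alpha(y^{-1}g^m y g^{\mp m}) = 1$, and $\bar h = \alpha(x) \in \langle \alpha(g), \alpha(y) \rangle$ in $G_1$. The first two conditions give $\alpha(y) \in E_{G_1}(\alpha(g))$ with $\l(\alpha(y)) \le |y|_{\mathcal A} < \l(\bar h)$, so the inductive hypothesis yields $\alpha(y) \in \alpha(E_G(g))$. Since $\alpha(E_G(g))$ is a subgroup of $G_1$ that contains $\alpha(g)$, we conclude $\bar h \in \langle \alpha(g), \alpha(y)\rangle \subseteq \alpha(E_G(g))$, closing the induction.

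\emph{Main obstacle.} The technical work is already concentrated in Lemma \ref{gl2}; once that lemma is in hand, both parts reduce to clean minimality/induction arguments on conjugator length. The one subtlety to verify each time Lemma \ref{gl2} is invoked is that the elements fed into it---namely $h^{-1}$ in (a) and $g^{\mp 1}$ in (b)---remain loxodromic and have $\mathcal A$-length at most $N$; both properties are inherited from $g, h$ because $\mathcal A$ is symmetric and loxodromicity is preserved under inversion.
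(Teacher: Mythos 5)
Your argument for part (b) and for the loxodromic sub-case of part (a) is the same as the paper's: a minimal-counterexample (resp.\ strong induction on lift-length) argument driven by Lemma~\ref{gl2}, and it is correct. The ``enlarging $N$'' remark in your plan is unnecessary: since $\mathcal A$ is symmetric, $|h^{-1}|_{\mathcal A}=|h|_{\mathcal A}\le N$ and $|g^{\mp 1}|_{\mathcal A}=|g|_{\mathcal A}\le N$ automatically, so the data fed to Lemma~\ref{gl2} already satisfy its length bound.

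The gap is in your treatment of the non-loxodromic sub-case of (a). You are right to notice that Lemma~\ref{gl2} is stated only for loxodromic $g,h$, but the proposed fix does not work: injectivity of $\alpha$ on a large ball (Lemma~\ref{G1}(b,c)) only separates \emph{elements}, not \emph{conjugacy classes}, and Lemma~\ref{malnorm} controls intersections of parabolic subgroups, not the behaviour of conjugacy under the quotient map; neither rules out two non-conjugate parabolic (or torsion) elements of $G$ becoming conjugate in $G_1$. In the absence of a torsion-free hypothesis one cannot even invoke malnormality of the parabolics in $G_1$ to reduce to conjugacy inside $\alpha(H_\lambda)$. The way the paper handles this---tacitly---is by running the same length-reduction argument for arbitrary $g,h\in B_{G,\mathcal A}(N)$, observing that for $n=1$ the proof of Lemma~\ref{gl2} never actually uses loxodromicity: when $n=1$ the sides $p_2,p_4$ of the van Kampen quadrangle are labeled by the geodesic words $W,V$ themselves and are therefore honest geodesics (no appeal to Corollary~\ref{qg} is needed), and the contiguity-degree bound from Lemma~\ref{gl1}(a) used in~\eqref{sum2} is trivially $O(N/\l(\partial\Pi))$ because $\l(p_2),\l(p_4)\le N$. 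If you want a self-contained fix, that observation is what you should record, rather than the appeal to Lemmas~\ref{G1} and~\ref{malnorm}.
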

\begin{proof}
First, choose $\e_1$, $\mu$, $\rho$ satisfying the conditions of Lemma \ref{gl2}. Suppose $g$ and $h$ are non-conjugate elements of $G$ which become conjugate in $G_1$, and $g, h\in B_{G, \mathcal A}(N)$. Suppose $x$ is the shortest element in $G$ satisfying $\alpha(x^{-1}gxh^{-1})=1$. But then by Lemma \ref{gl2} there exists a strictly shorter element $y$ such that $\alpha(y^{-1}gyh^{-1})=1$, contradicting our choice of $x$. This proves (a).

Now suppose $g\in B_{G, \mathcal A}(N)$ is a loxodromic element such that $\alpha(E_G(g))\neq E_{G_1}(\alpha(g))$. Clearly, $\alpha(E_G(g))\subset E_{G_1}(\alpha(g))$. Let $x$ be a shortest element of $G$ such that $\alpha(x)\in E_{G_1}(\alpha(g))\setminus \alpha(E_G(g))$. Then by Lemma \ref{Eg} there exists some $n\in\mathbb N$ such that $\alpha (x^{-1}g^nxg^{\pm n})=1$ in $G_1$. Since $x\notin E_G(g)$, $x^{-1}g^nxg^{\pm n}\ne 1$ in $G$, so we can apply Lemma \ref{gl2} to find a strictly shorter element $y$ satisfying $\alpha (y^{-1}g^nyg^{\pm n})=1$ in $G_1$. Then $\alpha(y)\in E_{G_1}(g)$, and by our choice of $x$ we get that $\alpha(y)\in \alpha(E_G(g))$. However, then $\alpha(x)\in\alpha(\langle g, y\rangle)\le \alpha(E_G(g))$, a contradiction.
\end{proof}


\section{Constructing groups with specified conjugacy growth}


To prove our main result, we will need some special words satisfying sufficiently strong small cancellation conditions. These words were constructed in \cite{Osi10}. More precisely, let $G$ be a group hyperbolic
relative to a collection of subgroups $\Hl $, $X$ a finite
relative generating set of $G$ with respect to $\Hl $. Let $\delta$ be the hyperbolicity constant of
$\Gamma(G, X\cup\mathcal H)$ provided by Theorem \ref{hms}. Consider words $W$ satisfying the following conditions:

\begin{enumerate}
\item[(W$_1$)] $W\equiv xa_1b_1\ldots a_nb_n$ for some $n\ge 1$, where:

\item[(W$_2$)] $x\in X\cup \{ 1\} $;

\item[(W$_3$)] $a_1, \ldots , a_n$
(respectively $b_1, \ldots , b_n$) are elements of a parabolic subgroup $H_\alpha $
(respectively $H_\beta $), where $H_\alpha \cap H_\beta =\{ 1\} $;

\end{enumerate}

\begin{thm}[Theorem 7.5 \cite{Osi10}]\label{w}
There exists a constant $L=L(\e, \delta)>0$ and a finite set $\Omega\subset G$ such that the following is satisfied. Suppose that $W$ is a word in $X\cup\mathcal H$ satisfying the
conditions (W$_1$)--(W$_3$) and $a_i\ne a_j^{\pm 1}$,
$b_i\ne b_j^{\pm 1}$ whenever $i\ne j$, and $a_i\ne a_i^{-1}$,
$b_i\ne b_i^{-1}$, $i,j\in \{ 1, \ldots , n\}$. Also, suppose the elements $a_1, \ldots , a_n, b_1, \ldots , b_n$ do not  belong to the set $\{ g\in \langle \Omega \rangle\; : \; |g|_{\Omega} \le L \}$. Then the set $\mathcal W$ of all cyclic shifts of $W^{\pm 1} $ satisfies the $C_1(\e , \frac{3\e +11}{n} ,\frac13, 2, 2n+1)$ small cancellation
condition.
\end{thm}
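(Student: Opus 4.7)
The plan is to verify the three clauses of Definition~\ref{DefSC} (including the $\e'$-piece version of clause (3) required for $C_1$) with the stated parameters $\mu = (3\e + 11)/n$, $\lambda = 1/3$, $c = 2$, $\rho = 2n+1$. Clause (1) is immediate from (W$_1$) since every cyclic shift of $W^{\pm 1}$ has length $2n+1$ in the alphabet $X \cup \mathcal H$, so $\|R\| = \rho$ for each $R$ in $\mathcal W$.

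For the $(1/3, 2)$-quasi-geodesic clause, I would invoke the isolated-components framework from \cite{Osi06a}. By (W$_3$) together with $H_\alpha \cap H_\beta = \{1\}$, any two consecutive parabolic letters of $W$ lie in different parabolic subgroups, so every $a_i$ (resp.\ $b_i$) is a maximal $H_\alpha$-component (resp.\ $H_\beta$-component) of any path $p$ labeled by a cyclic shift of $W^{\pm 1}$. If a subpath $q$ of such a $p$ violated the $(1/3, 2)$-quasi-geodesic inequality, a much shorter geodesic with the same endpoints would produce a geodesic polygon in which many of these components would be isolated. The standard finiteness theorem for isolated components in relatively hyperbolic groups would then force one of the corresponding group elements $a_i, b_i$ to lie in a fixed finite subset of a fixed finitely generated subgroup of $G$; this is exactly the data $\Omega$ and $L$ of the statement, and the hypothesis on the $a_i, b_i$ rules it out.

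The substantive content is clause (3). Let $U$ be an $\e$-piece or $\e'$-piece of some $R \in \mathcal W$, matched with $U' = Y U^{\pm 1} Z$ in the sense of Definition~\ref{piece}, where $\|Y\|, \|Z\| \le \e$. Build the geodesic quadrangle $Q$ in $\G$ whose two long sides are labeled by $U$ and $(U')^{\pm 1}$ and whose short sides are labeled by $Y$ and $Z^{-1}$. Each parabolic letter of the long sides contributes an $H_\alpha$- or $H_\beta$-component of $Q$. The isolated-components machinery applied to $Q$ forces every such component into one of three types: (i) paired across $Q$ with a component of the opposite long side; (ii) within distance $\e$ of one of the short sides (of which there are only boundedly many); or (iii) with group element of $\Omega$-length at most $L$, excluded by hypothesis. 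Consequently, apart from a bounded number of exceptional letters near the short sides, every parabolic letter of $U$ is paired with a corresponding parabolic letter of $U'$ in an orientation-compatible way.

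The main obstacle is converting this pairing into the bound $\|U\| < \mu \|R\| = 3\e + 11$. If $\|U\|$ were at least $3\e+11$, the number of paired parabolic letters would suffice to produce, after reading off group elements, either the relation $Y R Y^{-1} = R'$ in $G$ (excluded in the $\e$-piece case by Definition~\ref{piece}(3)) or, in the $\e'$-piece case, a nontrivial self-alignment of $R$. The latter would identify some $a_i$ with $a_j^{\pm 1}$ ($i \ne j$), or $b_i$ with $b_j^{\pm 1}$ ($i \ne j$), or $a_i$ with $a_i^{-1}$, or $b_i$ with $b_i^{-1}$, or an $H_\alpha$-letter with an $H_\beta$-letter---each ruled out by the hypotheses on the $a_i, b_i$ together with $H_\alpha \cap H_\beta = \{1\}$. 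The trickiest part is the explicit constant $3\e + 11$: it arises from tallying the at most $\e$ parabolic letters absorbed near each short side plus a small fixed additive constant coming from the component-alignment step, giving at most $2\e + O(1)$ exceptional letters and hence the required inequality $\|U\| \le 3\e + 11 = \mu n \le \mu \|R\|$.
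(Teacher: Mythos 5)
The paper does not actually prove Theorem~\ref{w}; it is imported verbatim as Theorem~7.5 of \cite{Osi10}, so there is no in-paper proof to compare your proposal against. Judged on its own merits, your outline is aimed in the right direction: the source of $\Omega$ and $L$ in \cite{Osi10} is indeed the finiteness theorem for isolated $H_\lambda$-components of geodesic polygons in $\G$, and the $C_1$-condition is verified by examining how the components of the two long sides of a quadrangle labelled by $U$, $(U')^{\pm 1}$, $Y$, $Z$ either align across the quadrangle, get absorbed near the short sides, or are isolated (and hence short in the $\Omega$-metric).

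The proposal nonetheless stops short of the actual content. A small flaw first: you assert $\|R\| = 2n+1$ for every $R \in \mathcal W$ to discharge clause~(1), but (W$_2$) permits $x = 1$, in which case $\|W\| = 2n < 2n+1$, so the claimed equality is not automatic. The substantial gap is in clause~(3). You correctly reduce to a quadrangle and a trichotomy for its components, and you correctly identify that a self-alignment ought to be excluded by the hypotheses $a_i \neq a_j^{\pm 1}$, $a_i \neq a_i^{-1}$, and $H_\alpha \cap H_\beta = \{1\}$. But two steps are missing. First, connectedness of two components in a geodesic polygon only places them in the same $H_\alpha$-coset, which is strictly weaker than an equality of group elements such as $a_i = a_j^{\pm 1}$; to obtain such an equality (and, in the $\e$-piece case, the relation $YRY^{-1}=R'$) one must read off the boundary relation of the quadrangle and use the strict $H_\alpha$/$H_\beta$ alternation of $W$ together with $H_\alpha \cap H_\beta = \{1\}$ to synchronize the two long sides letter by letter; you gesture at this but never carry it out. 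Second, the explicit constant $3\e+11$ --- precisely the number the theorem requires --- is attributed to ``a small fixed additive constant coming from the component-alignment step'' and left as a black box (and your intermediate claim $\mu\|R\|=3\e+11$ is also off, since $\mu\|R\| = (3\e+11)(2n+1)/n$). As written, the proposal is a plausible road map that names the right tools but omits the two computations that constitute the proof.
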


\begin{thm}\label{scq}
Let $G$ be a group hyperbolic relative to a collection of
subgroups $\Hl $, $S$ a suitable subgroup of $G$, and $t_1, \ldots
, t_m$ arbitrary elements of $G$. Let $N\in\N$, and $X$ be a finite relative generating set of $G$. Then there exists a group $\overline{G}$ and an epimorphism
$\alpha \colon G\to \overline{G}$ such that:
\begin{enumerate}

\item[(a)] The group $\overline{G}$ is hyperbolic relative to $\{ \alpha
(H_\lambda ) \} _{\lambda \in \Lambda } $.

\item[(b)] For any $i=1,\ldots , m$, we have $\alpha (t_i)\in \alpha (S)$.

\item [(c)] $\alpha$ is injective on $B_{G, X\cup\mathcal H} (N)$. In particular, the restriction of $\alpha $ to $\bigcup_{\lambda\in \Lambda}H_\lambda $ is injective.

\item[(d)] $\alpha (S)$ is a suitable subgroup of $\overline{G}$.

\item[(e)] An element of $\overline{G} $ has finite order only if it is an image of an element of finite order in $G$. In particular, if $G$ is torsion free, then so is $\overline G$.

\item[(f)]
If $g, h\in B_{G, X\cup\mathcal H} (N)$, then $\alpha(g)\sim\alpha(h)$ if and only if $g\sim h$.

\item[(g)]
If $g\in B_{G, X\cup\mathcal H} (N)$ is loxodromic, then $E_{\overline{G}}(\alpha(g))=\alpha(E_G(g))$.

\end{enumerate}
\end{thm}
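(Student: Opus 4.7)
The plan is to build $\overline G$ as a small cancellation quotient of $G$ defined by relators that pull each $t_i$ into the image of $S$, and then to extract the required properties from Lemma \ref{G1} and Corollary \ref{gc}. First, by Lemma \ref{non-com}(1), choose pairwise non-commensurable loxodromic elements $s_1,\ldots,s_{2m+2}\in S$ with $E_G(s_i)=\langle s_i\rangle$ and pairwise trivial intersections among the $\langle s_i\rangle$. By iterating Lemma \ref{Eg}(c), $G$ becomes hyperbolic relative to the enlarged collection $\Hl\cup\{E_G(s_i)\}_{i=1}^{2m+2}$, and the relative generating set may be enlarged to $X'=X\cup\{t_1,\ldots,t_m\}$. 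Fix $\lambda=1/3$, $c=2$, and a radius $N'\ge N$ large enough that $s_1,\ldots,s_{2m+2}$ all lie in $B_{G,X'\cup\mathcal H'}(N')$; Corollary \ref{gc} applied with these parameters produces constants $\e_1,\mu_0,\rho_0$.

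For each $i\in\{1,\ldots,m\}$, form the relator
\[
R_i \;\equiv\; t_i\, s_{2i-1}^{k_{i,1}}\, s_{2i}^{l_{i,1}}\, s_{2i-1}^{k_{i,2}}\, s_{2i}^{l_{i,2}}\cdots s_{2i-1}^{k_{i,n}}\, s_{2i}^{l_{i,n}},
\]
where $n$ is large enough that $(3\e_1+11)/n<\mu_0$ and $2n+1>\rho_0$, and the positive integers $k_{i,j},l_{i,j}$ are distinct within each $i$ and large enough that the corresponding powers of $s_{2i-1},s_{2i}$ avoid the finite exceptional set of Theorem \ref{w}. Since $t_i\in X'\cup\{1\}$ and $E_G(s_{2i-1})\cap E_G(s_{2i})=\{1\}$, each $R_i$ satisfies (W$_1$)--(W$_3$), so Theorem \ref{w} yields that the symmetrization of $R_i^{\pm 1}$ satisfies $C_1(\e_1,\mu_0,\tfrac13,2,\rho_0)$. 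The relators use disjoint pairs of parabolic subgroups, so the same estimate is inherited by the full symmetrized set $\mathcal R$ of cyclic shifts of $R_1^{\pm 1},\ldots,R_m^{\pm 1}$; alternatively the $R_i$ may be imposed one at a time. Define $\overline G=G/\ll\mathcal R\rr$ with quotient map $\alpha$.

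Conclusions (c), (e), (f), (g) follow directly from Lemma \ref{G1}(b),(c) and Corollary \ref{gc}. For (a), Lemma \ref{G1}(a) gives relative hyperbolicity of $\overline G$ with respect to the images of $\Hl\cup\{E_G(s_i)\}$; since the $\alpha(E_G(s_i))$ are cyclic, Lemma \ref{exhyp} collapses this to hyperbolicity relative to $\{\alpha(H_\lambda)\}_\lambda$. Part (b) is immediate: $\alpha(R_i)=1$ expresses $\alpha(t_i)$ as the $\alpha$-image of a word in the $s_j\in S$. For (d), $s_{2m+1}$ and $s_{2m+2}$ are not involved in any relator and lie in $B_{G,X'\cup\mathcal H'}(N')$. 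By (e) their images have infinite order; any $h\in H_\lambda\setminus\{1\}$ has $|h|_{X\cup\mathcal H}=1$, so (f) implies $\alpha(s_{2m+j})$ is not conjugate in $\overline G$ to any $\alpha(h)$ with $h$ parabolic, hence $\alpha(s_{2m+j})$ is loxodromic. By (g), $E_{\overline G}(\alpha(s_{2m+j}))=\alpha(\langle s_{2m+j}\rangle)$, and by (c) these two cyclic subgroups intersect trivially, so $\alpha(S)$ is suitable.

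The main obstacle is the delicate ordering of constants: both $\e_1,\mu_0,\rho_0$ depend on the desired ball radius $N'$, while Theorem \ref{w} requires $n$ to beat $\mu_0$ and $\rho_0$ and the exponents to avoid a finite exceptional set depending on $\e_1$. Fortunately these dependencies are acyclic, so the choices can be made in sequence. A secondary subtle point is extending Theorem \ref{w} (stated for a single word) to the multi-relator set $\mathcal R$; this can be handled either by inducting on the relators or by observing that any piece shared between $R_i$ and $R_j$ with $i\ne j$ would have to pass through parabolic subgroups with trivial intersection.
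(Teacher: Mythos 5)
Your proposal follows essentially the same strategy as the paper: use Lemma \ref{non-com} to produce non-commensurable loxodromic elements of $S$ with cyclic $E_G(s_i)$, enlarge the peripheral structure via Lemma \ref{Eg}(c), build relators of the form $t_i\cdot(\text{alternating }s\text{-powers})$ so that Theorem \ref{w} yields the $C_1$ small cancellation condition, pass to the quotient, and read off (a),(c),(e) from Lemma \ref{G1}, (f),(g) from Corollary \ref{gc}, (b) from the relator, and (d) from (g)+(c). The differences are cosmetic: the paper treats $m=1$ and quotes induction for general $m$, reusing the same $s_1,s_2$ to verify suitability of $\alpha(S)$, whereas you reserve two extra elements $s_{2m+1},s_{2m+2}$ for (d) and spell out the loxodromicity of their images and the collapse of the enlarged peripheral collection via Lemma \ref{exhyp} --- details the paper elides.

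One place where your preferred route is not quite justified: the claim that ``the same estimate is inherited by the full symmetrized set $\mathcal R$'' of all $m$ relators simultaneously does not follow directly from Theorem \ref{w}, which is stated for the cyclic shifts of a single word $W^{\pm 1}$. To extend the $C_1$-condition to a multi-relator set one must bound pieces shared between $R_i$ and $R_j$ for $i\ne j$, and your heuristic that such a piece ``would have to pass through parabolic subgroups with trivial intersection'' is plausible but not a proof (one would have to track how the $\varepsilon$-fellow-traveling interacts with the BCP property). Your acknowledged alternative --- impose the $R_i$ one at a time, i.e.\ reduce to $m=1$ and induct --- is precisely what the paper does and is the safe route, since after each step the images of $S$, of $\Hl$, and of the surviving $s_j$ retain all the properties needed to repeat the argument. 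Given that you flagged this, the proof is sound.
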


\begin{rem}\label{screm}
It is easy to see from the proof of Theorem \ref{scq} that $$\overline{G}= G/\ll t_1w_1, \ldots , t_mw_m\rr $$ for some elements $w_1, \ldots , w_m\in S$. Moreover, we can assume that $w_1, \ldots , w_m\in [S,S]$ by choosing the exponents $m_1, \ldots, m_n$ in the proof so that $m_1+\cdots + m_n=0$.
\end{rem}
Conditions $(a)-(e)$ of the theorem are proved in \cite{Osi10} using the theory of small cancellation over relatively hyperbolic groups. Additional lemmas proved in the previous section will allow us to prove the remaining conditions (f),(g) and to show that these conditions and (a)-(e) can be achieved simultaneously.

\begin{proof}[Proof of Theorem \ref{scq}]
We consider only the case $m=1$, since the general case will follow by induction from repeated applications of this case. We set $\mathcal A=X\cup\mathcal H$ and let $\mathcal O=\mathcal S\cup
\mathcal Q$ as defined in (\ref{Gfull}).

Let $\mu ,\e_1\ge \e, \rho $ be constants such that the conclusions of Lemma \ref{G1} and Corollary \ref{gc} hold for $\lambda =1/3 $, $c=2$,  and $N$. By Lemma \ref{non-com}, there are non-commensurable loxodromic elements $s_1, s_2\in S$ such that $E_G(s_1)=\langle s_1\rangle$ and $E_G(s_2)=\langle s_2\rangle$. Lemma \ref{Eg} gives that $G$ is hyperbolic relative to the collection $\Hl\cup E_G(s_1) \cup E_G(s_2)$. Then by Theorem
\ref{w}, there are $n$ and $m_1, \ldots , m_n$ such that
the set $\mathcal R$ of all cyclic shifts and their inverses of the  word
$$R\equiv ts_1^{m_1}s_2^{m_1}\ldots s_1^{m_n}s_2^{m_n}$$
satisfies the $C_1(\e_1, \mu, 1/3, 2, \rho )$--condition (and hence the $C_1(\e, \mu, 1/3, 2, \rho )$-condition as $\e_1\ge \e$). Indeed it suffices to choose large enough $n$ and $m_1, \ldots , m_n$ satisfying $m_i\ne \pm m_j$ whenever $i\ne j$. Let $\overline{G}$ be the quotient of $G$ obtained by
imposing the relation $R=1$ and $\alpha $ the corresponding natural
epimorphism.

Lemma \ref{G1} gives us assertions (a), (c), and (e). Note that in $\overline{G}$, the equality $R=1$ implies  $\alpha(t^{-1})=\alpha(s_1^{m_1}s_2^{m_1}\ldots s_1^{m_n}s_2^{m_n})$, hence $\alpha(t)\in \alpha(S)$. Assertions (f) and (g) follow from Corollary \ref{gc}. It remains to prove (d).  Without loss of generality we can assume that $s_1,s_2\in B_{G,X\cup\mathcal H}(N)$. Hence $E_{\overline G}(\alpha (s_1))\cap E_{\overline G}(\alpha (s_2))=\{1\}$ by (g). This means that the image of $S$ is a
suitable subgroup of $\overline{G}$.
\end{proof}

We will also need the following result which provides the building blocks for the groups constructed in the proof of Theorem \ref{main}.

\begin{thm}[Corollary 1.2 \cite{Osi10}]\label{2CC}
There exists a torsion free 2-generated group with exactly 2 conjugacy classes.
\end{thm}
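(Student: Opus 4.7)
The plan is to build the desired group as a direct limit of an infinite tower of $2$-generated torsion-free relatively hyperbolic groups $G_0 \twoheadrightarrow G_1 \twoheadrightarrow G_2 \twoheadrightarrow \cdots$, in which each successor is obtained from its predecessor by an HNN extension (Corollary \ref{sgHNN}) followed by a small cancellation quotient (Theorem \ref{scq}). I would start with $G_0 = \langle a\rangle \ast \langle b\rangle \cong F_2$, viewed as hyperbolic relative to $\{\langle a\rangle, \langle b\rangle\}$; this is torsion-free and has itself as a suitable subgroup (for instance, $ab$ and $ab^2$ are non-commensurable loxodromic elements with cyclic elementary closures by malnormality in free products). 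Fix an enumeration $g_1, g_2, \ldots$ of the non-trivial elements of $F_2$ in which each element appears infinitely often.

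Inductively, suppose $G_{i-1}$ is torsion-free, generated by the images $a_{i-1}, b_{i-1}$ of $a, b$, equipped with a suitable subgroup $S_{i-1}$ and an epimorphism $\pi_{i-1}\colon F_2 \to G_{i-1}$ injective on $B_{F_2}(i-1)$. Let $u$ denote the image of $g_i$ in $G_{i-1}$. If $u = 1$ or $u \sim a_{i-1}$ already, set $G_i = G_{i-1}$. Otherwise, first apply Corollary \ref{sgHNN} to form
\[
\tilde G_i = G_{i-1} \ast_{E_{G_{i-1}}(u)^t = \langle h \rangle}
\]
with $h \in \langle a_{i-1}\rangle$ and a compatible isomorphism, so that the stable letter $t$ realizes a conjugacy between $u$ and a prescribed power of $a_{i-1}$ (a parabolic $u$ is handled by a symmetric variant). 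The corollary keeps $\tilde G_i$ torsion-free, hyperbolic relative to the same parabolic subgroups, and keeps $S_{i-1}$ suitable. Then apply Theorem \ref{scq} to $\tilde G_i$ with distinguished element $t$ and suitable subgroup $S_{i-1}$, choosing the injectivity parameter $N$ large enough that $F_2 \to G_i$ is injective on $B_{F_2}(i)$. By Remark \ref{screm}, $G_i = \tilde G_i/\ll tw\rr$ for some $w \in S_{i-1}$, so $t = w^{-1}$ already lies in $G_{i-1}$ and $G_i$ is generated by $a_i, b_i$ alone. Parts (a), (d), and (e) of Theorem \ref{scq} preserve relative hyperbolicity, suitability of the subgroup, and torsion-freeness, while the chosen conjugacy between $u$ and a power of $a_{i-1}$ passes to $G_i$.

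Taking $G = \varinjlim G_i$ yields a $2$-generated torsion-free group in which the image $a_\infty$ of $a$ is non-trivial (by injectivity on increasing balls). For any non-trivial $g \in G$, pick a preimage $g' \in F_2$; then $g'$ is non-trivial in $G_j$ for all large $j$, and because $g'$ recurs infinitely often in the enumeration, the construction at some later stage forces its image to be conjugate to a power of the current image of $a$. Since each power of $a$ is in turn forced to be conjugate to $a_\infty$ through the same enumeration, iterating gives $g \sim a_\infty$, so $G$ has exactly two conjugacy classes.

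The main obstacle is the first step of the inductive construction. Corollary \ref{sgHNN} only produces a conjugacy between $u$ and some power of $a_{i-1}$ (dictated by the primitive root structure of $u$), not between $u$ and $a_{i-1}$ itself, so the sharp conclusion requires a carefully scheduled enumeration identifying all the relevant powers with $a_\infty$ at later stages. Simultaneously maintaining torsion-freeness, $2$-generatedness, relative hyperbolicity, suitability of $S_i$, and injectivity of $\pi_i$ on balls of increasing radius through both the HNN extension and the subsequent small cancellation quotient is the delicate technical core of the argument.
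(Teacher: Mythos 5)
Your plan reproduces the overall shape of the construction in \cite{Osi10} (and sketched before the proof of Theorem~\ref{main} in this paper): iterate HNN-extensions to conjugate elements into a parabolic subgroup, followed by small cancellation quotients to keep the group $2$-generated. But it contains a gap that is fatal, not merely technical, and the fix you allude to at the end cannot work.

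The problem lies in starting from $F_2$ with parabolic subgroups $\langle a\rangle$ and $\langle b\rangle$. In a torsion-free relatively hyperbolic group the parabolic subgroups are malnormal (Lemma~\ref{malnorm}), and this persists through every HNN-extension and every small cancellation quotient in the construction (Lemma~\ref{HNN}, Theorem~\ref{scq}(a),(c)). Consequently, for each $i$ and each $g\notin\langle a_i\rangle$, the intersection $\langle a_i\rangle\cap\langle a_i\rangle^g$ is trivial, so distinct powers $a_i^m\ne a_i^n$ are never conjugate in $G_i$, and since $\pi_i$ is injective on $\langle a\rangle$ they remain non-conjugate in the limit. Your limit group therefore contains an infinite cyclic subgroup whose non-trivial elements are pairwise non-conjugate, hence it has infinitely many conjugacy classes. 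No scheduling of the enumeration can repair this, because there is simply no move available inside the relatively hyperbolic framework with cyclic parabolics that conjugates $a$ to $a^2$: Corollary~\ref{sgHNN} applies only to loxodromic elements, and Lemma~\ref{HNN} embeds one parabolic into a \emph{different} one. Your ``symmetric variant'' for parabolic $u$ is exactly the step that does not exist.

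The construction actually used in \cite{Osi10} (and recalled in the discussion following Remark~\ref{expgr}) resolves this by choosing the parabolic subgroup $R$ to \emph{already} have exactly two conjugacy classes: one first builds, by the classical Higman--Neumann--Neumann embedding (which is allowed to be infinitely generated and leaves the relatively hyperbolic framework entirely), a countable torsion-free group $R$ in which all non-trivial elements are conjugate, and then sets $G(0)=R\ast F(x,y)$, hyperbolic relative to $R$. The iterated HNN-plus-small-cancellation steps then only need to conjugate each $g_{i+1}$ into $R$ and to pull the elements of $R$ into $\langle x,y\rangle$; no two elements of $R$ ever need to be \emph{made} conjugate, because they already are. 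This is precisely the missing ingredient in your proposal.
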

\begin{rem}\label{expgr}
Note that every torsion free group $G$ with $2$ conjugacy classes has exponential growth. Indeed every element $g\in G$ is conjugate to its square. If $g\ne 1$, this easily implies that the intersection of the cyclic subgroup $\langle g\rangle $ with a ball of radius $n$ with respect to a fixed finite generating set of $G$ has exponentially many elements.
\end{rem}

Groups with $2$ conjugacy classes are constructed in \cite{Osi10} as direct limits of relatively hyperbolic groups. The proof of Theorem \ref{main} is based on the same idea; however its implementation is not automatic. Before proceeding to the proof of Theorem \ref{main}, we give a sketch of the construction used in \cite{Osi10} and indicate the main difficulties which occur in the proof of our main result.

Let $R$ be a countable torsion free group in which all non-trivial elements are conjugate. This group can be easily  constructed using successive HNN-extensions (see \cite{HNN} or Theorem 3.3 in \cite[Chapter 4]{LS}). Let $F=F(x,y)$ be the free group on two generators, and consider the free product $G(0)=R*F$, which is hyperbolic relative to $R$. Enumerate all elements of $G(0)$ as $\{1=g_0, g_1,...\}$ and all elements of $R$ as $\{1=r_0,r_1,...\}$. Now we inductively create a sequence of groups and epimorphisms $G(0)\to G(1)\to ...$ as follows. After we have constructed a group $G(i)$, which is assumed to be hyperbolic relative to the image of $R$, we take an HNN-extension with a stable letter $t$ which conjugates $g_{i+1}$ to some non-trivial element of $R$ (unless $g_{i+1}$ is already parabolic and then we skip this step). Then we apply parts (a)-(e) of Theorem \ref{scq} to this group with the image of $F$ as a suitable subgroup and $\{r_i, t\}$ as the finite set of elements. The resulting group is $G(i+1)$, and since the image of $t$ is inside the image of $F$, there is a natural quotient map from $G(i)$ to $G(i+1)$. Thus, the direct limit of this sequence will be a quotient of $G(0)$, and since $G(0)$ is generated by $\{x, y, r_1,...\}$ and the image of each $r_i$ is inside the image of $F$ in $G(i+1)$, the limit group will be generated by $\{x, y\}$. Since each $g_i$ is conjugate to an element of $R$ in $G(i+1)$, all non-trivial elements will be conjugate in the limit group.

To prove Theorem \ref{main}, instead of trying to make all elements conjugate we want to control the number of conjugacy classes inside each ball with respect to a fixed finite generating set. So at the $i$th step of our construction we fix the desired number of conjugacy classes on the sphere of radius $i$ (up to some constants), making all other elements of the sphere conjugate. The main problem, however, is that the conjugacy relations which we want to add may also produce ``unwanted" conjugacy relations between elements we want to keep unconjugate. For instance conjugating two elements $x$ and $y$, we also make $x^n$ conjugate to $y^n$ for all $n$. Induced conjugations of this particular type can be controlled by working with primitive conjugacy classes and making all elements in our group conjugate to all their nontrivial powers. However this does not solve the problem completely as ``unwanted" conjugations can occur even between primitive elements. More precisely, the problem splits into two parts. When dealing with the sphere of radius $i$ at step $i$, we have to make sure that
\begin{enumerate}
\item[1)]  ``Unwanted" conjugations do not occur inside the ball of radius $(i-1)$.
\item[2)]   We keep enough non-conjugate primitive elements on spheres of radii $> i$ to continue the construction.
\end{enumerate}

To overcome the first difficulty we ``attach" a new parabolic subgroup with $2$ conjugacy classes to a representative of each conjugacy class which we want to keep inside the ball of radius $(i-1)$. Then Lemma \ref{malnorm} together with part (a) of Theorem \ref{scq} guarantee that such classes remain different at all steps of the inductive construction, and hence in the limit group.

The second part of the problem is more complicated and is typical for such inductive proofs. It is, in fact, the main obstacle in implementing the ideas from \cite{Osi10} in the proof of Theorem \ref{main} and is the reason why we need to go deep in small cancellation theory and add new parts (f) and (g) to Theorem \ref{scq}. To guarantee~2) we construct sets $U_i$ of elements with ordinary word length $i$ but relative length at most $4$.  Then parts (f) and (g) of Theorem \ref{scq} come into play and allow us to control these elements during the small cancellation substep of each step; Lemma \ref{conjHNN} is used to control them during the HNN-extension substep.

\begin{thm}\label{main}
Let $G$ be a group generated by a finite set $X$, $f$ the conjugacy growth function of $G$ with respect to $X$. Then the following conditions hold.
\begin{enumerate}
\item[(a)] $f$ is non-decreasing.
\item[(b)] There exists $a\ge 1$ such that $f(n) \le  a^n$ for every $n\in\N$.
\end{enumerate}
Conversely, suppose that a function $f\colon\N\to \N$ satisfies the above conditions (a) and (b). Then there exists an infinite finitely generated group $G$ such that $\xi_G\sim f$.
\end{thm}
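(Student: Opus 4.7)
Parts (a) and (b) are immediate. For (a), $B_{G,X}(n)\subseteq B_{G,X}(n+1)$ forces $\xi_G(n)\le\xi_G(n+1)$. For (b), the trivial bound $\xi_G(n)\le |B_{G,X}(n)|\le (2|X|+1)^n$ gives (b) with $a=2|X|+1$.

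For the converse, fix a non-decreasing $f\colon\N\to\N$ with $f\preceq a^n$. The case when $f$ is eventually bounded by a constant reduces to producing an infinite finitely generated group with a prescribed finite number of conjugacy classes, supplied by \cite[Theorem 41.2]{Ols-book} or by the $2$-class construction of \cite{Osi10}, so I assume $f$ is unbounded. The plan is to construct $G$ as a direct limit of quotients $F=G_0\twoheadrightarrow G_1\twoheadrightarrow G_2\twoheadrightarrow\cdots$ of the rank-$2$ free group $F=F(x,y)$, with each $G_i$ containing a non-degenerate hyperbolically embedded subgroup so that the machinery of \cite{DGO} is available at every stage. Since $G$ is a quotient of $F$, it will automatically be finitely generated by the images of $\{x,y\}$.

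Fix a subset $\mathcal{D}\subset F$ of \emph{designated representatives} with $|\mathcal{D}\cap B_F(n)|\sim f(n)$; such $\mathcal{D}$ exists since $1\le f(n)\le a^n$ while $|B_F(n)|\sim 3^n$. Enumerate $F\setminus\mathcal{D}=\{w_1,w_2,\dots\}$ in non-decreasing order of length, and at stage $i$ pick $d_i\in\mathcal{D}$ with $|d_i|\le|w_i|$ and build $G_i$ from $G_{i-1}$ by imposing relations that force the images of $w_i$ and $d_i$ to be conjugate in $G_i$ without disturbing the data fixed at earlier stages. The core tool is the small-cancellation / Dehn-filling machinery over groups with hyperbolically embedded subgroups developed in \cite{DGO}, which at every stage should guarantee three things: (i) the quotient map $G_{i-1}\to G_i$ is injective on a ball of some radius $R_i\to\infty$, so lengths of elements of $\mathcal{D}$ stabilize in the limit and agree with their lengths in $F$; (ii) distinct representatives in $\mathcal{D}$ remain pairwise non-conjugate, so each contributes a genuine conjugacy class of $G$; and (iii) $G_i$ itself still contains a non-degenerate hyperbolically embedded subgroup, permitting the next stage. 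Passing to the limit, every element of $G$ is conjugate to some $d\in\mathcal{D}$, these representatives are pairwise non-conjugate, and their word lengths are preserved, so $\xi_G(n)$ counts $|\mathcal{D}\cap B_G(n)|\sim f(n)$ as required.

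The main obstacle is the simultaneous realization of (i), (ii), and (iii): the relations imposed at stage $i$ must be ``deep enough'' in the sense of \cite{DGO} to avoid collapsing short elements or merging designated representatives, yet must still realize the prescribed conjugation between the fixed elements $w_i$ and $d_i$, and must preserve the hyperbolically embedded subgroup structure. I expect the right way to reconcile these demands is to work inside a large hyperbolically embedded free subgroup of $G_{i-1}$ (guaranteed to exist by \cite{DGO} whenever $G_{i-1}$ contains a non-degenerate hyperbolically embedded subgroup), which provides ample room to select conjugating elements satisfying the required small-cancellation parameters independently of the choices of $w_i$ and $d_i$. The remaining bookkeeping---verifying that the resulting asymptotic count of conjugacy classes matches $f$ up to the equivalence relation $\sim$---is combinatorial and, given the control from (i) and (ii), should follow directly from the choice of $\mathcal{D}$.
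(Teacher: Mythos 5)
Your parts (a) and (b) are fine and match the paper. For the converse, the overall shape of your plan — direct limit of relatively/hyperbolically hyperbolic quotients, impose conjugations one at a time, preserve a designated family of non-conjugate representatives — is the natural first attack, and it is essentially what the paper says in Section 6 it would \emph{like} to do. But the paper then explains precisely why this plan, as stated, does not go through, and your sketch does not answer either of the two difficulties it raises.

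The first difficulty is that of ``unwanted conjugations.'' You propose to pick a set $\mathcal{D}\subset F$ of representatives once and for all and preserve their pairwise non-conjugacy by making the small-cancellation parameters ``deep enough.'' But depth only gives injectivity on a ball of radius $R_i$; it does not by itself prevent two elements of $\mathcal{D}$ of length $>R_i$ from becoming conjugate as a side effect of the relation you impose at stage $i$ (a relation $w_i\sim d_i$ also forces $w_i^n\sim d_i^n$, and more subtle coincidences between primitive elements can occur). The paper's fix is structural, not just parametric: at each step it attaches a fresh torsion-free parabolic subgroup with two conjugacy classes to each representative it wants to protect, so that Lemma~\ref{malnorm} (almost-malnormality of parabolics) makes distinct representatives permanently non-conjugate at all later stages. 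Nothing in your sketch plays this role.

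The second, and in some ways more serious, difficulty is that the small-cancellation control (injectivity, conjugacy preservation, preservation of $E_G(\cdot)$) is only available for elements of \emph{bounded relative length}. In the paper this is exactly Theorem~\ref{scq}(c),(f),(g), all of which require a fixed bound $N$ on $|\cdot|_{X\cup\mathcal H}$. Your representatives $d\in\mathcal{D}$ have unbounded word length in $F$, and there is no relative generating set with respect to which they all have length $\le N$, so there is no single application of the machinery that protects all of them. This is precisely why the paper does not start from $F(x,y)$ but from $G(1)=A\ast\langle h\rangle$ with $A$ a torsion-free two-conjugacy-class group, and why it builds its representatives inside $U=\{ab: a\in A\setminus\{1\},\, b\in B\setminus\{1\}\}$: all of $U$ has relative length $\le 4$, so parts (f),(g) of Theorem~\ref{scq} can be invoked at every stage with $N=4$. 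This also forces the paper to prove new results (Lemmas~\ref{gl1},~\ref{gl2}, Corollary~\ref{gc}) precisely to get the conjugacy and $E_G$-preservation statements you gesture at in (ii). Finally, the paper has to run a nontrivial counting argument (the ``good'' vs.\ ``bad'' elements of $U'_{L(k+1)}$) to guarantee that, after the collisions that do happen, there are still $\bar f(k+1)$ surviving non-conjugate representatives of the right length; your ``the remaining bookkeeping is combinatorial'' elides exactly the place where the count could fail. In short, the obstacle you flag at the end of your proposal is real, and the ideas needed to overcome it (parabolic anchoring to prevent merging, and confining representatives to a set of uniformly bounded relative length) are not present in your sketch.
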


\begin{proof} The `only if' part of the theorem is obvious. Let us prove the other one.
Suppose $f\colon\N\to\N$ is a non-decreasing function such that $f\le a^n$ for some $a>1$. If $f\equiv 1$, the statement is obvious. Otherwise passing to an equivalent function if necessary, we can assume that $f(n)\ge 2$ for all $n$. Let $\bar{f}$ denote the function defined by $\bar{f}(n)=f(n)-f(n-1)$.

Let $A$ be a finitely generated torsion free group with two conjugacy classes. Clearly, it suffices to assume that $f(n)\leq\gamma_A(n)$, since $\gamma_A$ is exponential by Remark \ref{expgr}.  Set $G(1)=A*\langle h\rangle$, where $h$ generates an infinite cyclic group. Let $X'$ be a finite generating set for $A$. Then we take $X=X'\cup h^{-1}X'h\cup\{h\}$ as a finite generating set for $G(1)$. Let $B=h^{-1}Ah$, and fix $a_0\in A$ and $b_0\in B$ such that $|a_0|_X=|b_0|_X=1$.

Further for each $i\geq 2$, we create a collection of subsets $U_i=\{ab : a\in A\setminus\{1\}, b\in B\setminus\{1\}, |ab|_X=i\}$. Note that all elements of $U=\bigcup_{i=1}^{\infty} U_i\cup\{a_0\}$ have word length at most $4$ with respect to the generators $A\cup\{h\}$. Clearly $|U_n|\succeq \gamma_A(n)$, and since $\gamma_A$ is exponential, there exists a constant $L$ such that
\begin{equation}\label{ULn}
|U_{Ln}|\geq(\gamma_{G(1)}(n-1))(\gamma_{G(1)}(n-1)+1)+\bar{f}(n).
\end{equation}

Suppose we have constructed a group $G(k)$, an epimorphism $\phi_k\colon G(1)\to G(k)$, and a collection of subsets $\{a_0\}=W_1\subset...\subset W_k\subset U$, such that the following conditions are satisfied.
\begin{enumerate}[(a)]
\item \label{i}
$\phi_k$ is injective on $X$, $A$, and $U$ (so we identify these sets with their images in $G(k)$).
\item \label{hr}
$G(k)$ is hyperbolic relative to a collection $\mathcal C_k$ of proper subgroups with two conjugacy classes.
\item $G(k)$ is a suitable subgroup of itself.
\item \label{tf}
$G(k)$ is torsion free.
\item \label{g}
Every $g\in B_{G(k),  X}(k-1)$ is parabolic in $G(k)$.
\item \label{pc}
Each element of $W_k$ is parabolic, and there is exactly one element of $W_k$ inside each parabolic conjugacy class. In particular, distinct elements of $W_k$ are non-conjugate.
\item \label{f}
For all $1\leq n\leq k$, $|W_n|=f(n)-1$, and for all $w\in W_n$, $|w|_X\leq Ln$.
\item \label{nc2}
if $u, v$ are two different elements of $U^{\pm 1}$ and $u\sim v$ in $G(k)$, then $u\sim v\sim a_0$ in $G(k)$. Furthermore at most $\gamma_{G(1)}(k-1)$ elements of $U$ are conjugate to $a_0$ in $G(k)$.
\item\label{pe}
For all $u\in U$ such that $u$ is loxodromic in $G(k)$, $u$ is also primitive in $G(k)$. In particular, $E_{G(k)}(u)=\langle u\rangle$.
\end{enumerate}

Obviously (a)-(f) hold for $G(1)$ with $\phi_1$ the identity map and $\mathcal C_1=\{A\}$ (see Lemma \ref{non-com} for (c)). Passing to an equivalent function, we can assume that $f(1)=2$ without loss of generality. This gives (g) for $G(1)$. It is clear (e.g. from \cite[Chapt. IV, Theorem 1.4]{LS}) that all elements of $U$ are pairwise non-conjugate in $G(1)$. If $u\in U$ is loxodromic in $G(1)$, then $u\neq a_0$ and so $u=a_1h^{-1}a_2h$ for some $a_1, a_2\in A\setminus\{1\}$. The normal form theorem for free products \cite[Chapt. IV, Theorem 1.2]{LS} implies that $u$ is primitive in $G(1)$. Thus (h) and (i) also hold for $G(1)$.

Now we construct $G(k+1)$ in a sequence of four steps. The intermediate groups constructed in each step will be denoted as follows.
\[
G(k) \stackrel{\iota_1}\hookrightarrow G^{\prime}(k) \stackrel{\alpha_1}\twoheadrightarrow G^{\prime\prime}(k) \stackrel{\iota_2}\hookrightarrow G^{\prime\prime\prime}(k) \stackrel{\alpha_2}\twoheadrightarrow G(k+1).
\]
Here $\iota_1$ will be the natural embedding into an HNN-extension of the previous group, while $\iota_2$ will be the natural embedding into an HNN-extension of a a free product, where the previous group is one of the factors. $\alpha_1$ and $\alpha_2$ will be epimorphisms which will correspond to taking a small cancellation quotient of the previous group. We will first show how to construct the group $G(k+1)$, and then verify that it satisfies all the inductive conditions.

{\bf Step 1.} Let $g_1,...,g_n$ be the list of all elements in $G(k)$ such that $|g_i|_X=k$ and $g_i$ is loxodromic in $G(k)$ for each $1\leq i\leq n$. Note that $n\leq \bar{\gamma}_{G(1)}(k)$. Since $G(k)$ is torsion free, for each $i$ there exists some $h_i$ such that $E_{G(k)}(g_i)=\langle h_i\rangle$. Now we define $G^{\prime}(k)$ as the multiple HNN-extension
\[
G^{\prime}(k)=\langle G(k), t_1,...,t_n \; | \; h_i^{t_i}=a_0\rangle.
\]

Let $\iota_1\colon G(k)\hookrightarrow G^{\prime}(k)$ be the natural embedding; for convenience we identify $G(k)$ with its image in $G^{\prime}(k)$.  Suppose $u, v\in U^{\pm 1}$ such that $u\sim v$ in $G^{\prime}(k)$. If $u\sim v$ in $G(k)$, then by \eqref{nc2} $u\sim v\sim a_0$ in $G(k)$ and hence also in $G^{\prime}(k)$. Otherwise, by Lemma \ref{conjHNN} either $u$ or $v$ is conjugate to some element $h_i^m$ in $G(k)$. If this holds for $u$, we have $u\sim h_i^m\sim a_0^m\sim a_0$ in $G^{\prime}(k)$, and similiarly if it holds for $v$. Thus, two elements of $U^{\pm 1}$ are either non-conjugate in $G^{\prime}(k)$ or they are both conjugate to $a_0$.

Further if $u\sim a_0$ in $G^{\prime}(k)$ but not in $G(k)$, then $u$ must be conjugate to a power of some $h_i$ in $G(k)$ by Lemma \ref{conjHNN}. For each $h_i$, $u\sim h_i^m$ in $G(k)$ implies that $m=\pm 1$ since $u$ is primitive in $G(k)$ by \eqref{pe}. By \eqref{nc2}, for each $1\leq i\leq n$, there is at most one element $u\in U$ conjugate to $h_i^{\pm 1}$ in $G(k)$.  Thus, the number of elements of $U$ conjugate to $a_0$  in $G^{\prime}(k)$ is at most $\gamma_{G(1)}(k-1)+n\leq\gamma_{G(1)}(k)$. By Lemma \ref{conjHNN} and \eqref{pe}, all elements $u\in U$ which are loxodromic in $G^{\prime}(k)$ are primitive in $G^{\prime}(k)$. Corollary \ref{sgHNN} gives that $G^{\prime}(k)$ will be hyperbolic relative to $\mathcal C_k$ and $G(k)$ is suitable in $G^{\prime}(k)$.

{\bf Step 2.}
Let $G^{\prime\prime}(k)$ be the quotient group of $G^{\prime}(k)$ provided by applying Theorem \ref{scq} to $G^{\prime}(k)$ with $\{t_1,...,t_n\}$ as our finite set, $G(k)$ as our suitable subgroup, and $N=4$. Let $\alpha_1\colon G^{\prime}(k)\twoheadrightarrow G^{\prime\prime}(k)$ be the corresponding epimorphism.

Since elements of $U\cup X$ all have relative length at most $4$, $\alpha_1$ will be injective on $U\cup X$, so we identify these sets with their images. The final two assertions of Theorem \ref{scq} give that two elements of $U^{\pm 1}$ are conjugate in $G^{\prime\prime}(k)$ if and only if they were conjugate in $G^{\prime}(k)$, and for each loxodromic $u\in U$, $\langle u\rangle=E_{G^{\prime}(k)}(u)=E_{G^{\prime\prime}(k)}(u)$. Hence, all loxodromic elements of $U$ are still primitive in $G^{\prime\prime}(k)$. Let us prove that $\alpha_1\circ\iota_1$ is surjective. Since $G^{\prime}(k)$ is generated by $G(k)\cup\{t_1,...,t_n\}$ and for each $1\leq i\leq n$, $\alpha_1(t_i)\subset \alpha_1(G(k))$, we have that $G^{\prime\prime}(k)$ is generated by $\alpha_1(G(k))$. Thus, $\alpha_1\circ\iota_1$ will be surjective, and $G^{\prime\prime}(k)$ will be finitely generated by (the image of) $X$. Theorem \ref{scq} also gives that $\alpha_1(G(k))=G^{\prime\prime}(k)$ will be a suitable subgroup of $G^{\prime\prime}(k)$.

Now let $U^{\prime}_{L(k+1)}$ be the set of $u\in U$ such that $|u|_X=L(k+1)$ in $G^{\prime\prime}(k)$. Then $|U^{\prime}_{L(k+1)}|\geq|U_{L(k+1)}|$; this follows from the fact that for each $u\in U_{L(k+1)}$, we have $|u|_X\leq L(k+1)$ in $G^{\prime\prime}(k)$, so we can choose $j\in\mathbb N$ such that $|ub_0^j|_X=L(k+1)$. An element $u\in U^{\prime}_{L(k+1)}$ will be called {\it good} if for all elements $v$ conjugate to $u$ in $G^{\prime\prime}(k)$, we have $|v|_X\geq k+1$; otherwise it will be called {\it bad}. We want to show that $U^{\prime}_{L(k+1)}$ contains at least $\bar{f}(k+1)$ good elements.

Indeed otherwise by (\ref{ULn}), $U^{\prime}_{L(k+1)}$ must contain $(\gamma_{G(1)}(k))(\gamma_{G(1)}(k)+1)$ bad elements, each of which is conjugate to some element of $X$-length at most $k$. Since there are at most $\gamma_{G(1)}(k)$ such elements in $G^{\prime\prime}(k)$, there exists $V\subset U^{\prime}_{L(k+1)}$ such that $V$ contains $(\gamma_{G(1)}(k)+1)$ pairwise conjugate elements. Then all elements of $V$ must be pairwise conjugate in $G^{\prime}(k)$, and thus all elements of $V$ are conjugate to $a_0$ in $G^{\prime}(k)$. But this contradicts the fact that there are at most $\gamma_{G(1)}(k)$ elements of $U$ conjugate to $a_0$ in  $G^{\prime}(k)$. Thus $U^{\prime}_{L(k+1)}$ contains at least $\bar{f}(k+1)$ good elements.

{\bf Step 3.}
Let $W^{\prime}_{k+1}=\{w_1,...,w_s\}$ be a subset of the good elements of $U^{\prime}_{L(k+1)}$ such that $s=|W^{\prime}_{k+1}|=\bar{f}(k+1)$.  Note that if $u$ is a good element, then $u$ is not conjugate to $a_0$, hence $u$ is not conjugate to any other element of $U^{\pm 1}$. Thus all elements of $W^{\prime}_{k+1}$ are loxodromic and hence primitive; furthermore, they are pairwise non-commensurable by Lemma \ref{primcom}. Then we define $W_{k+1}=W_k\cup W^{\prime}_{k+1}$. Now, for each $1\leq i\leq s$, let $C_i$ be a torsion free group with two conjugacy classes, generated by $\{x_i, y_i\}$. Consider the group  $G^{\prime\prime}(k)*(*_{i=1}^{s}C_i)$, which naturally contains an isometrically embedded copy of  $G^{\prime\prime}(k)$. By Lemma \ref{relsub}, this group will be hyperbolic relative to $\mathcal C_{k+1}$, where $\mathcal C_{k+1}=\mathcal C_k\cup(\cup_{i=1}^{s}C_i)$. Also, clearly primitive elements of $G^{\prime\prime}(k)$ remain primitive in $G^{\prime\prime}(k)*(*_{i=1}^{s}C_i)$, and any two non-conjugate elements of $G^{\prime\prime}(k)$ remain non-conjugate. Since each $w_i$ is primitive and loxodromic, we get that $E_{G^{\prime\prime}(k)*(*_{i=1}^{s}C_i)}(w_i)=\langle w_i\rangle$.  Now we take a multiple HNN-extension and form the group
\[
G^{\prime\prime\prime}(k)=\langle G^{\prime\prime}(k)*(*_{i=1}^{s}C_i),d_1,...,d_s | w_i^{d_i}=x_i\rangle.
\]
Let $\iota_2\colon G^{\prime\prime}(k)\hookrightarrow G^{\prime\prime\prime}(k)$ denote the natural embedding, and again we identify $G^{\prime\prime}(k)$ with its image. Since the elements $w_i$ are pairwise non-commensurable, by Lemma \ref{conjHNN} we can inductively apply Corollary \ref{sgHNN} to get that $G^{\prime\prime\prime}(k)$ is hyperbolic relative to $\mathcal C_{k+1}$ and contains $G^{\prime\prime}(k)$ as a suitable subgroup.

{\bf Step 4.}
Finally, we obtain $G(k+1)$ as the quotient group of $G^{\prime\prime\prime}(k)$ by applying Theorem \ref{scq} to the finite set $\{ d_i, x_i, y_i:1\leq i\leq s\}$, suitable subgroup $G^{\prime\prime}(k)$, and $N=4$. Let $\alpha_2\colon G^{\prime\prime\prime}(k)\twoheadrightarrow G(k+1)$ be the corresponding epimorphism, and define $\phi_{k+1}=\alpha_2\circ\iota_2\circ\alpha_1\circ\iota_1\circ \phi _k$. Let us prove that $\phi_{k+1}$ is surjective. We have shown that $\alpha_1\circ\iota_1$ is surjective.  Similarly, since $G^{\prime\prime\prime}(k)$ is generated by $G^{\prime\prime}(k)\cup\{d_i, x_i, y_i: 1\leq i\leq s\}$ and $\alpha_2(\{d_i, x_i, y_i: 1\leq i\leq s\})\subset\alpha_2(G^{\prime\prime}(k))$, we get that $G(k+1)$ is generated by $\alpha_2(G^{\prime\prime}(k))$, hence $\alpha_2\circ\iota_2$ is surjective. Thus, $\phi_{k+1}$ is surjective being the composition of surjective maps.

 Let us now prove that $G(k)$ satisfies all the inductive assumptions. First, Theorem \ref{scq} gives that $\alpha_1$ and $\alpha_2$ are injective on all elements of relative length at most $4$, which includes all elements in $U$, $X$ and $\mathcal C_{k+1}$. Hence $\phi_ {k+1}$ will be injective on these sets being the composition of these maps and the injective maps $\iota_1$ and $\iota_2$. $G^{\prime\prime\prime}(k)$ is hyperbolic relative to $\mathcal C_{k+1}$, and Theorem \ref{scq} gives that $G(k+1)$ will be hyperbolic relative to $\mathcal C_{k+1}$ and $\alpha_2(G^{\prime\prime}(k))=G(k+1)$ is a suitable subgroup of itself.  Taking HNN-extensions and free products of torsion free groups gives torsion free groups, and combining this with Theorem \ref{scq} gives that $G(k+1)$ will be torsion free. Clearly, every element of $B_{G(k), X}(k-1)$ is parabolic in $G^{\prime}(k)$, thus they are also parabolic in $G(k+1)$.  By construction, each $w\in W_{k+1}$ is parabolic in $G^{\prime\prime\prime}(k)$ and the conjugacy class of $w$ corresponds to a unique parabolic subgroup. The definition of $W_{k+1}$ gives that $|W_{k+1}|=f(k+1)-1$, and for all $w\in W_{k+1}$, $|w|_X\leq L(k+1)$; clearly this also holds for all $1\leq n\leq k$ as passing to quotient groups can only decrease word length. We have shown that in $G^{\prime}(k)$, two elements of $U^{\pm 1}$ are conjugate if and only if they are both conjugate to $a_0$, and furthermore at most $\gamma_{G(1)}(k-1)$ elements of $U$ are conjugate to $a_0$ in $G^{\prime}(k)$.  At all other steps non-conjugate elements of $U^{\pm 1}$ remain non-conjugate, so this also holds in $G(k+1)$. Finally, loxodromic elements of $U$ are primitive in $G^{\prime\prime}(k)*(*_{i=1}^{s}C_i)$, and Lemma \ref{conjHNN} gives that they are primitive in $G^{\prime\prime\prime}(k)$. Hence for all loxodromic $u\in U$, $\langle u\rangle=E_{G^{\prime\prime\prime}(k)}(u)=E_{G(k+1)}(u)$ by Theorem \ref{scq}, so $u$ is still primitive in $G(k+1)$. Thus, $G(k+1)$ satisfies all the inductive conditions.

Now, we take $G$ to be the limit of this sequence of groups; that is, let $G=G(1)/N$,
where $N=\bigcup_{i=1}^\infty \Ker \phi_i$. We will show that every conjugacy class in $G$ has a representative in $\bigcup_{k=1}^{\infty} W_k$. Suppose $g\in B_{G, X}(n)$, and let $g_0$ be a pre-image of $g$ in $G(1)$ such that $|g_0|_X\leq n$. Then $g_0$ is parabolic in the group $G(n+1)$ by condition \eqref{g}, so $g_0$ is conjugate to an element of $W_{n+1}$ in $G(n+1)$ by \eqref{pc}. Hence $g$ is conjugate to an element of $W_{n+1}$ in $G$. Thus we have $$\xi_G(n)\leq |W_{n+1}|+1=f(n+1)\leq f(2n).$$ On the other hand, all elements of $W_n$ are pairwise non-conjugate, and for each $w\in W_n$, $|w|_X\leq Ln$. Hence $f(n)=|W_n|+1\leq\xi_G(Ln)$.  Therefore $\xi_G\sim f$.
\end{proof}


\section{Conjugacy growth and subgroups of finite index}


We now move to the proof of Theorem \ref{main2}. We start with an `infinitely generated version' of Theorem \ref{main2}.

\begin{lem}\label{NCZ}
There exists a short exact sequence $$1\rightarrow N\rightarrow C \rightarrow \mathbb Z_2\rightarrow 1$$ such that the following hold.
\begin{enumerate}
\item[(a)] The group $C$ is countable and torsion free.
\item[(b)] The subgroup $N$ has exactly $2$ conjugacy classes.
\item[(c)] There is a free subgroup $F\le N$ of rank $2$ and an element $a\in C$ such that for any two distinct elements $f_1,f_2\in F$, $af_1$ and $af_2$ are not conjugate in $C$.
\end{enumerate}
\end{lem}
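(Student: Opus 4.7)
The plan is to construct $C$ as a direct limit of relatively hyperbolic groups, following the scheme from \cite{Osi10} used to prove Theorem \ref{2CC} but with additional care to preserve a family of non-conjugate elements in the non-trivial coset. Let $R$ be a countable torsion free group in which all non-trivial elements are conjugate (constructed by a chain of HNN-extensions, as in the sketch before the proof of Theorem \ref{main}), let $F=F(x,y)$ be free of rank $2$, and set $C(0)=R\ast F\ast\langle a\rangle$, which is torsion free and hyperbolic relative to $\{R\}$. Let $\pi_0\colon C(0)\to\mathbb Z_2$ send $a\mapsto 1$ and kill $R\ast F$, with kernel $N(0)$. The pair $(a,F)$ already satisfies (c) in $C(0)$ by the conjugacy normal form in free products. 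The key observation driving (c) is that $N$ is normal in $C$, so no element of the coset $aN$ can be conjugate in $C$ to any element of $N$; combined with Lemma \ref{conjHNN}, this will let us preserve (c) through HNN-extensions whose amalgamated subgroups lie in $N$.

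Inductively, suppose we have a torsion free group $C(k)$, hyperbolic relative to the image of $R$, containing the image of $R\ast F$ as a suitable subgroup, equipped with a compatible projection $\pi_k\colon C(k)\to\mathbb Z_2$. Using a diagonal enumeration of the non-trivial elements of the growing groups $N(k)$, pick $n_{k+1}\in N(k)\setminus\{1\}$. If $n_{k+1}$ is parabolic, it is conjugate in the image of $R\le N(k)$ to a fixed non-trivial $r\in R$, so no modification is needed. If $n_{k+1}$ is loxodromic, apply Corollary \ref{sgHNN} to form the HNN-extension $\widetilde C(k)=C(k)\ast_{E_{C(k)}(n_{k+1})^{t_{k+1}}=\langle r\rangle}$ with stable letter $t_{k+1}$, consistently extend $\pi_k$ by $t_{k+1}\mapsto 0$ (so that $t_{k+1}$ lies in the kernel), and then apply Theorem \ref{scq} to $\widetilde C(k)$ with $\{t_{k+1}\}$ as the distinguished element, the image of $R\ast F$ as the suitable subgroup, and the ball parameter in Theorem \ref{scq} chosen large enough that every $af$ with $|f|_F\le k+1$ lies inside the protected ball. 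The resulting $C(k+1)$ retains all structural properties, the stable letter is absorbed into the image of $R\ast F$, and $\pi_{k+1}$ descends consistently. Finally, take $C=\varinjlim C(k)$ and $N=\Ker\pi$ for the induced map $\pi\colon C\to\mathbb Z_2$.

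Property (a) is immediate. For (b), the diagonal enumeration ensures every non-trivial element of the limit $N$ is eventually conjugated inside $N$ to $r$: for parabolic elements this happens inside the image of $R$, while for loxodromic elements the stable letter $t_{k+1}\in N$ provides the conjugation. Non-trivial elements of the image of $R$ remain pairwise conjugate at every step since $R$ embeds as a subgroup of $N(k)$, so $N$ has exactly two conjugacy classes in the limit. For (c), suppose $af_1\sim af_2$ in $C$ with distinct $f_1,f_2\in F$; the conjugation must be witnessed in some $C(k)$, so it suffices to show $af_1\not\sim af_2$ in each $C(k)$ inductively. The HNN substep preserves this by Lemma \ref{conjHNN}, because $af_j\in aN(k)$ is never conjugate in $C(k)$ to any element of $N(k)$ and hence not to any element of the cyclic amalgamated subgroups $E_{C(k)}(n_{k+1})$ and $\langle r\rangle$. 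The small cancellation substep preserves it by Theorem \ref{scq}(f), since $af_1$ and $af_2$ are arranged to lie in the protected ball at this step.

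The main obstacle is that (c) constrains the infinite family $\{af:f\in F\}$ while Theorem \ref{scq}(f) only preserves conjugacy relations within a bounded ball at each step. This is resolved by the freedom to choose the ball parameter in Theorem \ref{scq} as large as we like at each stage: at stage $k+1$ we protect every pair $(af_1,af_2)$ with $|f_1|,|f_2|\le k+1$, and any hypothetical conjugation in the limit is witnessed at some finite stage, by which time the relevant pair sits inside the protected ball, contradicting the inductive claim.
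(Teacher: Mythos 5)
Your approach is genuinely different from the paper's and substantially heavier: the paper proves Lemma~\ref{NCZ} by an entirely elementary chain of HNN-extensions (starting from the free group $A_0=\langle a,b,c\rangle$ with the natural map $a\mapsto\bar a$ onto $\Z_2$, and at stage $n$ adjoining stable letters $t_i$ with $k_i^{t_i}=k_0$ over the cyclic subgroups $\langle k_i\rangle,\langle k_0\rangle\le K_n=\Ker\e_n$), with no relative hyperbolicity, no suitable subgroups, and no small cancellation whatsoever. This is possible precisely because $C$ is not required to be finitely generated. Your proof imports the full machinery of Theorem~\ref{scq} and Corollary~\ref{sgHNN}, which is overkill here and, more importantly, creates problems that the elementary construction avoids.

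There are two concrete gaps. First, in the loxodromic HNN substep you take the associated subgroup to be $E_{C(k)}(n_{k+1})$ (this is forced on you if you want to invoke Corollary~\ref{sgHNN}), but this subgroup need not be contained in $N(k)$. For instance, already in $C(0)=R\ast F\ast\langle a\rangle$ the element $(af)^2$ for $f\in F\setminus\{1\}$ lies in $N(0)$ and is loxodromic, yet $E_{C(0)}\bigl((af)^2\bigr)=\langle af\rangle$ with $\pi_0(af)=1$. The relation $t^{-1}(af)t\in\langle r\rangle\subseteq N(k)$ is then incompatible with any value of $\pi_{k+1}(t)$, so ``consistently extend $\pi_k$ by $t_{k+1}\mapsto 0$'' fails. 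Replacing $E$ by $E\cap N(k)$ fixes the $\Z_2$-grading but destroys the hypotheses of Corollary~\ref{sgHNN}, which requires the amalgamated subgroup to be the maximal elementary subgroup. The paper sidesteps all of this by using the cyclic subgroup generated by the element itself, rather than its maximal elementary overgroup, as the associated subgroup; this is legitimate because nothing about relative hyperbolicity needs to be preserved.

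Second, your treatment of the parabolic case asserts that a parabolic $n_{k+1}\in N(k)$ is already ``conjugate in the image of $R\le N(k)$ to $r$, so no modification is needed.'' This confuses conjugacy in $C(k)$ with conjugacy in $N(k)$: a parabolic element of $N(k)$ is conjugate \emph{in $C(k)$} to an element of $R$, but the conjugator may lie outside $N(k)$. Concretely, $n=a^{-1}ra$ lies in $N(0)$, is parabolic, and by malnormality of $R$ in the free product (Lemma~\ref{malnorm}) every conjugator taking $n$ into $R$ lies in $Ra$, hence outside $N(0)$. So $n$ is conjugate in $N(0)$ only to elements of $aRa^{-1}$, not to $r$, and your construction as written never repairs this, leaving $N$ with more than two conjugacy classes. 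This distinction between $N$-conjugacy and $C$-conjugacy is exactly what the paper's construction handles automatically, since there both the conjugated elements $k_i,k_0$ and the stable letters $t_i$ are placed in the kernel by fiat.
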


\begin{proof}
We proceed by induction. Let $A_0=\langle a,b,c\rangle$ be the free group of rank $3$ and let $\e_0\colon A_0\to \langle a\mid a^2=1\rangle \cong \mathbb Z_2$ be the natural epimorphism. Assume that $A_n$ is already constructed together with an epimorphism $$\e_n\colon A_n\to  \mathbb Z_2.$$  Let $K_n$ denote the kernel of $\e_n$. We enumerate all elements of $K_n=\{ 1, k_0,k_1, \ldots \} $ and let $A_{n+1}$ be the multiple HNN-extension
$$
\langle A_n, \{ t_i\} _{i\in \mathbb N} \mid k_i^{t_i}=k_0\rangle.
$$
The map sending $K_n$ and all stable letters to $1$ (here $1$ denotes the identity element of $\Z_2$) extends to a homomorphism $\e_{n+1}\colon A_{n+1}\to \mathbb Z_2$.

Let $C=\bigcup\limits_{n=0}^\infty A_n$ and $N=\bigcup\limits_{n=0}^\infty K_n$. Clearly $N$ is a normal subgroup of index $2$ in $C$. Since all nontrivial elements of $K_n$ are conjugate in $K_{n+1}$, $N$ has exactly $2$ conjugacy classes. On the other hand, Lemma \ref{conjHNN} implies by induction that for any distinct $f_1,f_2\in \langle b,c\rangle$, the elements $af_1$ and $af_2$ are not conjugate in $A_n$. Hence the same holds true in $C$.
\end{proof}

\begin{thm}\label{main2}
There exists a finitely generated group $G$ and a finite index subgroup $H\le G$ such that $H$ has $2$ conjugacy classes while $G$ is of exponential conjugacy growth.
\end{thm}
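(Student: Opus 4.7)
The plan is to realize $G$ as an iterated small cancellation quotient of a virtually free group in which the conjugacy classes of an index-$2$ subgroup are merged to two while a hyperbolically embedded subgroup of the ambient group is preserved; the conclusion will then follow from Theorem~\ref{main0}.

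As the starting group I would take $G_0 = F_2 \rtimes \langle\sigma\rangle$, where $F_2 = \langle a,b\rangle$ and $\sigma$ is the involution swapping $a$ and $b$; the subgroup $H_0 = F_2$ is normal of index $2$. The element $t = a\sigma \in G_0\setminus H_0$ satisfies $t^2 = ab \in F_2$, which has infinite order, so $t$ is loxodromic in the hyperbolic group $G_0$ and $\langle t\rangle \h G_0$ in the sense of \cite{DGO}. I would enumerate pairs $(u_i,v_i)$ of words in $\{a^{\pm 1}, b^{\pm 1}\}$ representing nontrivial elements and build a chain
$G_0 \twoheadrightarrow G_1 \twoheadrightarrow G_2 \twoheadrightarrow \cdots$
inductively: at step $i$, if the images of $u_i,v_i$ in the current image $H_i$ of $H_0$ are nontrivial and non-conjugate, invoke the group-theoretic Dehn filling / small cancellation machinery of \cite{DGO} relative to the hyperbolically embedded cyclic subgroup $\langle t_i\rangle \h G_i$ to add a $\sigma$-equivariant pair of relators lying inside $H_i$ that forces $u_i$ and $v_i$ to become conjugate in $H_{i+1}$; choose the relators long enough in the appropriate relative metric so that the image $t_{i+1}$ of $t$ remains loxodromic and $\langle t_{i+1}\rangle$ remains non-degenerately hyperbolically embedded in $G_{i+1}$. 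Since every added relator lies inside $H_0$, the index-$2$ quotient structure is automatically preserved.

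In the direct limit $G = \varinjlim G_i$, the image $H$ of $H_0$ is an index-$2$ subgroup in which every pair of nontrivial elements has been explicitly made conjugate at some finite stage, so $H$ has exactly two conjugacy classes. Meanwhile the image of $t$ in $G$ generates a non-degenerate hyperbolically embedded cyclic subgroup: infinite by loxodromicity, and proper since $G$ cannot be virtually cyclic (an infinite group with two conjugacy classes is not virtually cyclic, and $\langle t\rangle \cap H$ is cyclic inside $H$). Theorem~\ref{main0} then yields $\xi_G \sim 2^n$, so $G$ has exponential conjugacy growth.

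The main obstacle I anticipate is the simultaneous control required at the inductive step: the added relators must be long enough in the relative metric to keep $\langle t\rangle$ hyperbolically embedded not merely in $G_{i+1}$ but through all subsequent stages and in the limit, yet they must be specifically shaped so that the prescribed pair $(u_i,v_i)$ genuinely becomes conjugate in $H_{i+1}$, and they must be placed $\sigma$-equivariantly so that the index-$2$ structure is not disturbed. Balancing these demands requires the quantitative Dehn-filling/small-cancellation calculus of \cite{DGO}, together with careful bookkeeping of small cancellation parameters across the entire enumeration so that all finite-stage hyperbolic embeddings pass to the limit.
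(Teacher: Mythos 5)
Your plan has a fatal gap at the final step, where you invoke Theorem~\ref{main0} for the limit group $G$. You claim that the image of $t$ in $G = \varinjlim G_i$ generates a non-degenerate hyperbolically embedded cyclic subgroup. This cannot be the case. Having a non-degenerate hyperbolically embedded subgroup is inherited by finite index subgroups: if $G$ had such a subgroup, then by Lemma~\ref{Kn} it would contain a hyperbolically embedded $K_2 \cong Z\times F_2$, and the argument in the proof of Theorem~\ref{main0} (via Lemma~\ref{malnHE}) shows that distinct infinite-order primitive conjugacy classes of $K_2$ stay distinct in $G$; since $K_2\cap H$ has index at most $2$ in $K_2$, this would give exponentially many conjugacy classes already inside $H$. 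That contradicts your requirement that $H$ have exactly $2$ conjugacy classes. In other words, a group with an index-$2$ subgroup having only $2$ conjugacy classes simply cannot carry a non-degenerate hyperbolically embedded subgroup, so no amount of care with small cancellation parameters can preserve $\langle t\rangle\h G$ in the limit. (This is also why the direct limits appearing in Sections 6 and 7 of the paper are never claimed to be relatively hyperbolic or acylindrically hyperbolic; the hyperbolic structure is used only at each finite stage.)

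The paper's proof therefore needs a different source of exponentially many conjugacy classes in $G$, one that does not rely on a hyperbolically embedded subgroup surviving to the limit. It pre-builds an auxiliary countable torsion-free group $C$ with a normal subgroup $N$ of index $2$ that has exactly $2$ conjugacy classes, together with a free subgroup $F\le N$ and an element $a\in C\setminus N$ such that the elements $\{af : f\in F\}$ are pairwise non-conjugate in $C$ (Lemma~\ref{NCZ}). It then sets $G(0)=C\ast F(x,y)$, which is hyperbolic relative to $C$, and runs the HNN-plus-small-cancellation machine (Corollary~\ref{sgHNN} and Theorem~\ref{scq}) to collapse the $\Z_2$-kernel to two conjugacy classes. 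The crucial point is that $C$ remains a malnormal parabolic subgroup of each $G(i)$ (Lemma~\ref{malnorm}) and embeds injectively via parts (a) and (c) of Theorem~\ref{scq}, so the pairwise non-conjugacy of $\{af : f\in F\}$ is preserved at every finite stage and hence in the limit. Exponential conjugacy growth of $G$ then comes from the exponential growth of $F$, not from Theorem~\ref{main0}. A secondary issue in your sketch: you propose to ``add a $\sigma$-equivariant pair of relators'' to force $u_i\sim v_i$, but a quotient relation that directly identifies words would collapse $u_i$ and $v_i$ rather than merely conjugate them; the standard device (and the one the paper uses) is an HNN-extension with a new stable letter conjugating one to the other, followed by a small cancellation quotient that writes the stable letter as a word in the old generators so the group stays finitely generated and the extra letter leaves no trace. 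You would need to track which $\Z_2$-coset the stable letter lands in, which the paper handles by mapping the stable letter to the kernel and checking commutativity of the relevant diagram at each step.
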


\begin{proof}
Let $$1\rightarrow N\rightarrow C \stackrel{\e}\rightarrow \mathbb Z_2\rightarrow 1$$ be the short exact sequence provided by Lemma \ref{NCZ}. The desired group $G$ is constructed as an inductive limit of
relatively hyperbolic groups as follows. Let $$G(0)=C\ast
F(x,y),$$ where $F(x,y)$ is the free group of rank $2$ generated
by $x$ and $y$. We enumerate all elements of $$C=\{ 1=c_0, c_1,
c_2, \ldots \} $$ and $$G(0)=\{ 1=g_0, g_1, g_2, \ldots \} .$$ Without loss of generality we may assume that
\begin{equation}\label{c1c2}
\e (c_1)=1.
\end{equation}
Here we use multiplicative notation and $1$ denotes the trivial element of $\mathbb Z_2$.

Suppose that for some $i\ge 0$, the group $G(i)$ has already been
constructed together with an epimorphism $\phi_i\colon G(0)\to
G(i)$ and an epimorphism $\alpha _i\colon G(i)\to \mathbb Z_2$. We use the same notation for elements $x,y, c_0, c_1, \ldots , g_0, g_1, \ldots $ and their images in
$G(i)$. Assume that $G(i)$ satisfies the following conditions. It
is straightforward to check these conditions for $G(0)$, the
identity map $\phi _0\colon G(0)\to G(0)$, and the epimorphism $\alpha _0\colon G(0)\to \mathbb Z_2$ which is induced by $\e$ and the map sending $x$ and $y$ to the non-trivial element of $\mathbb Z_2$.
\begin{enumerate}
\item[(a)] The restriction of $\phi _i$ to the subgroup $C$ is
injective. In what follows we identify $C$ with its image in
$G(i)$.

\item[(b)] $G(i)$ is hyperbolic relative to $C$.

\item[(c)] The elements $x$ and $y$ generate a suitable subgroup
of $G(i)$.

\item[(d)] $G(i)$ is torsion free.

\item[(e)] In $G(i)$, the elements $c_0, \ldots , c_i$
are contained in the subgroup generated by $x$ and $y$.

\item[(f)] The diagram
$$
\begin{CD}
G(0) @>{\alpha _0}>> \mathbb Z_2\\
@V{\phi_i}VV @VV{\rm id}V\\
G(i) @>{\alpha _i}>> \mathbb Z_2
\end{CD}
$$
is commutative.

\item[(g)] In $G(i)$, for every $j=1, \ldots , i$, if $\alpha_i(g_j)=1$ then the element $g_j$ is conjugate to $c_1$ by an element of $\Ker \alpha _i$ .

\end{enumerate}
The group $G(i+1)$ is obtained from $G(i)$ in two steps.

{\bf Step 1.} If $g_{i+1}$ is a parabolic element
of $G(i)$ or $\alpha _i(g_{i+1})\ne 1$, we set $G^{\prime}(i)=G(i)$. Otherwise, since $G(i)$ is torsion free, there is an isomorphism $\iota\colon E_{G(i)}(g_{i+1})\to  \langle c_1\rangle $. Now we define $G^{\prime}(i)$ to be the corresponding HNN--extension
$$
G^{\prime}(i)=\langle G(i), t\; | \; e^t=\iota (e), \, e\in
E_{G(i)}(g_{i+1})\rangle .
$$
Then $G^{\prime}(i)$ is hyperbolic relative to $C$
and $\langle x, y\rangle$ is suitable in $G^{\prime}(i)$ by Lemma \ref{sgHNN}. Note also that  $G^{\prime}(i)$ is torsion free being an HNN-extension of a torsion free group.

{\bf Step 2.}
We now apply Theorem \ref{scq} to the group $G^{\prime}(i)$, the
subgroup $S=\langle x,y\rangle \le G^{\prime}(i)$, and the set of
elements $\{ t, c_{i+1}\} $ (or just $\{ c_{i+1}\}$ if $G^{\prime}(i)=G(i)$). Let $G(i+1)= \overline{G}$, where $\overline {G}$ is the quotient group provided by Theorem
\ref{scq}. Since $t$ becomes an element of $\langle x,y\rangle $
in $G(i+1)$, there is a naturally defined epimorphism $\phi
_{i+1}\colon G(0)\to G(i+1)$. Using Theorem \ref{scq} and the inductive assumption it is
straightforward to verify properties (a)--(e) for $G(i+1)$.

Observe that the group $G^{\prime}(i)$ admits an epimorphism $\beta _i$ to $\mathbb Z_2$ which sends the stable letter and $\Ker  (\alpha_i)$ to $1$. Indeed this follows immediately from the inductive assumption and our construction of $G^{\prime}(i)$. By Remark \ref{screm} and part (f) of the inductive assumption, the kernel of the natural epimorphism $G^{\prime}(i)\to G(i+1)$ is contained in $\Ker  \beta _i$. Hence $\beta _i$ induces an epimorphism $\alpha _{i+1}\colon G(i+1)\to \mathbb Z_2$.  Obviously (f) and (g) hold for $G(i+1)$.

Let $G=G(0)/M$,
where $M=\bigcup_{i=1}^\infty \Ker \phi_i$. By (d) $G$ is torsion free. It is also easy to see that $G$ is
$2$--generated. Indeed, $G(0)$ is generated by $x,y, c_1, c_2,
\ldots $ and hence condition (e) implies that $G$ is generated by $x$ and $y$.

Further notice that $M\le \Ker \alpha _0$ by (f). Let $H=(\Ker \alpha _0)/M$. Then $G/H$ is isomorphic to $G(0)/\Ker\alpha_0$, so $|G/H|=2$. Let $h$ be a nontrivial element of $H$. We take an arbitrary preimage $g\in G(0)$ of $h$. Observe that $\alpha _i(g)=1$ for every $i$ by (f). Hence (the image of) the element $g$ becomes conjugate to $c_1$ by an element $\Ker \alpha _i$ at a certain step according to (g). Therefore, all non-trivial elements of $H$ are conjugate in $H$.

Finally let $F$ and $a$ be the free subgroup and the element of $C$ provided by Lemma \ref{NCZ}, respectively. By part (c) of Lemma \ref{NCZ}, parts (a), (b), (d) of the inductive assumption, and Lemma \ref{malnorm}, for any two distinct elements $f_1,f_2\in F$, $af_1$ and $af_2$ are not conjugate in $G(i)$. Hence the same holds true in $G$. Since the natural map from $C$ to $G$ is injective by (a) and the (ordinary) growth function of $F$ is exponential, the conjugacy growth function of $G$ is exponential as well.
\end{proof}

\end{document}